\documentclass{amsart}
\usepackage[latin1]{inputenc}
\usepackage[T1]{fontenc}
\usepackage{amsmath}
\usepackage{amssymb}
\usepackage{amsfonts}
\usepackage{amsthm}
\usepackage{graphicx}
\usepackage{color}
\usepackage[all]{xy}
\usepackage{sidecap}
\usepackage{times}
\DeclareMathOperator{\dist}{dist}

\DeclareMathOperator*{\esssup}{ess\,sup}

\DeclareMathOperator{\sgn}{sgn}
\DeclareMathOperator{\spann}{span}
\DeclareMathOperator{\id}{id}
\DeclareMathOperator{\ind}{ind}
\DeclareMathOperator{\coker}{coker}
\DeclareMathOperator{\codim}{codim}
\newcommand{\C}{{\mathbb C}}
\newcommand{\N}{{\mathbb N}}
\newcommand{\R}{{\mathbb R}}
\newcommand{\Z}{{\mathbb Z}}
\newcommand{\fB}{{\mathfrak B}}
\newcommand{\cB}{{\mathcal B}}
\newcommand{\cC}{{\mathcal C}}
\newcommand{\cS}{{\mathcal S}}
\newcommand{\cV}{{\mathcal V}}
\renewcommand{\d}{\,{\mathrm d}}
\newcommand{\tm}{\times}
\newcommand{\eps}{\varepsilon}
\newcommand{\intoo}[1]{\left(#1\right)}		
\newcommand{\intcc}[1]{\left[#1\right]}		
\newcommand{\set}[1]{\left\{#1\right\}}		
\newcommand{\abs}[1]{\left|#1\right|}		
\newcommand{\norm}[1]{\left\|#1\right\|}		
\newcommand{\sprod}[1]{\langle#1\rangle}		
\newcommand{\fall}{\;\text{ for all }}		
\newcommand{\on}{\;\text{ on }}		
\newtheorem{theorem}{Theorem}[section]
\newtheorem{lemma}[theorem]{Lemma}
\newtheorem{corollary}[theorem]{Corollary}
\newtheorem{proposition}[theorem]{Proposition}

\theoremstyle{definition}

\newtheorem{example}[theorem]{Example}
\newtheorem*{hypothesis*}{Hypothesis}
\theoremstyle{remark}
\newtheorem{remark}[theorem]{Remark}
\newcommand{\cref}[1]{Cor.~\ref{#1}}
\newcommand{\eref}[1]{Ex.~\ref{#1}}
\newcommand{\href}[1]{Hyp.~\ref{#1}}
\newcommand{\pref}[1]{Prop.~\ref{#1}}
\newcommand{\tref}[1]{Thm.~\ref{#1}}
\newcommand{\lref}[1]{Lemma~\ref{#1}}
\newcommand{\rref}[1]{Rem.~\ref{#1}}
\allowdisplaybreaks

\numberwithin{equation}{section}

\begin{document}

\title{Evans function, parity and nonautonomous bifurcations}


\author{Christian P\"otzsche}
\address{Christian P\"otzsche, Department of Mathematics, University of Klagenfurt, Universit\"atsstra{\ss}e 65--67, 9020 Klagenfurt, Austria}
\email{christian.poetzsche@aau.at}

\author{Robert Skiba}
\address{Robert Skiba, Faculty of Mathematics and Computer Science, Nicolaus Copernicus University in Toru\'n, ul.\ Chopina 12/18, 87-100 Toru{\'n}, Poland}
\email{robert.skiba@mat.umk.pl}

\subjclass[2020]{Primary 47J15, 34C37, 34C23; Secondary 47A53, 37C60}

\date{}

\dedicatory{}

\begin{abstract}
	The concept of parity due to Fitzpatrick, Pejsachowicz and Rabier is a central tool in the abstract bifurcation theory of nonlinear Fredholm operators. In this paper, we relate the parity to the Evans function, which is widely used in the stability analysis for traveling wave solutions to evolutionary PDEs.
	
	As application we obtain a flexible and general condition yielding local bifurcations of specific bounded entire solutions to (Carath{\'e}odory) differential equations. These bifurcations are intrinsically nonautonomous in the sense that the assumptions implying them cannot be fulfilled for autonomous or periodic temporal forcings. In addition, we demonstrate that Evans functions are strictly related to the dichotomy spectrum and hyperbolicity, which play a crucial role in studying the existence of bounded solutions on the whole real line and therefore the recent field of nonautonomous bifurcation theory. Finally, by means of non-trivial examples we illustrate the applicability of our methods.
\end{abstract}

\maketitle

\section{From Carath{\'e}odory to Krasnoselskii and beyond}
This paper investigates the local behavior of nonautonomous evolutionary differential equations under parameter variation. In contrast to the classical theory of dynamical systems, one cannot expect that such explicitly time-variant problems possess constant solutions (equilibria). For this reason, the recent nonautonomous bifurcation theory investigates changes in the structure of (forward or pullback) attractors or in the set of bounded entire solutions \cite{anagnostopoulou:poetzsche:rasmussen:22}. Apparently both approaches are related because pullback attractors consist of bounded entire solutions.

More detailed, we study parametrized nonautonomous differential equations
\begin{equation}
	\tag{$C_\lambda$}
	\dot x=f(t,x,\lambda)
	\label{cde}
\end{equation}
in $\R^d$ allowing merely measurable dependence on the time variable (one speaks of \emph{Cara\-th{\'e}odory} \emph{equations} \cite{aulbach:wanner:96,kurzweil:86}). These problems naturally occur in the field of Random Dynamical Systems as pathwise realization of random differential equations \cite{arnold:98}, in Control Theory when working with essentially bounded control functions \cite{colonius:kliemann:99}, and clearly include the special case of nonautonomous ordinary differential equations. Aiming to detect bifurcations in Carath{\'e}odory equations \eqref{cde}, our strategy to locate their bounded entire solutions is to characterize them as zeros of an abstract parametrized operator between suitable spaces of bounded functions. This allows to employ corresponding tools from the functional analysis of Fredholm operators. In this setting, both sufficient, but also necessary conditions for local bifurcations of bounded entire solutions were already established in \cite{poetzsche:12} (see also \cite[pp.~42ff]{anagnostopoulou:poetzsche:rasmussen:22}). Nonetheless, although \cite{poetzsche:12} provides a precise information on the local structure of the bifurcating solutions, it is restricted to a particular form of nonyperbolicity and requires specific smoothness and further assumptions on the partial derivatives of $f$.

In contrast to \cite{poetzsche:12}, the contribution at hand is less focussed on a detailed description of bifurcation diagrams. We rather intend to introduce a more general and easily applicable tool to detect changes in the set of bounded entire solutions to \eqref{cde}, when $\lambda$ varies. A starting point for such an endeavor might be the classical result of Krasnoselskii that odd algebraic multiplicity of critical eigenvalues for the linearization of a parametrized nonlinear equation implies bifurcation. This can be seen as an initial contribution to abstract analytical bifurcation theory (cf.\ \cite{krasnosielski:56} or e.g.~\cite[p.~204, Thm.~II.3.2]{kielhoefer:12}). It is nevertheless restricted to nonlinear fixed-point problems involving completely continuous operators. In nonautonomous bifurcation theory the operators characterizing bounded entire solutions to ordinary differential or Carath{\'e}odory equations \eqref{cde} leave this setting. Hence, the Leray--Schauder degree and specifically the classical Krasnoselskii bifurcation theorem cannot be applied. One rather needs a degree theory, a concept of multiplicity and ambient bifurcation results tailor-made for our more general class of nonlinear operators. We demonstrate that the \emph{parity} developed in \cite{Fitz91, fitzPeja86, FiPejsachowiczIII, FitPejRab} is indeed a tool suitable for these purposes. This topological invariant applies to a continuous path of index $0$ Fredholm operators and plays a fundamental role in the degree and abstract bifurcation theory of nonlinear Fredholm mappings \cite{FiPejsachowiczIV, pejsachowicz:11a, pejsachowicz:11b, Pej-Ski} or \cite{esquinas:lopez:88, esquinas:88, lopez:sampedro:23}. Yet, explicit parity computations depend on the particular problems and are nontrivial.

In our situation, Fred\-holm\-ness means that variation equations of \eqref{cde} along continuous families of bounded solutions possess compatible exponential dichotomies on both semiaxes \cite[Lemma~4.2]{palmer:84}. The alert reader might realize that a related constellation is also met in the stability theory for traveling wave solutions (pulses, shock layers) of various types of evolutionary PDEs (see e.g.\ \cite{sandstede:02,kapitula:promislow:13}). In this area the \emph{Evans function} is a complex-valued analytical function, whose set of zeros coincides with the point spectrum of a differential operator arising as linearization along the wave. The order of the zero gives the algebraic multiplicity of the eigenvalues and based on the Argument Principle one even obtains information on the total number of zeros. Thus, the Evans function is of crucial importance in this field and allows explicit computations.

For the sake of nonautonomous bifurcation theory we associate real Evans functions $E$ to variation equations of \eqref{cde} along a given continuous family of bounded entire solutions. It suffices to demand that $E$ is continuous in the real bifurcation parameter $\lambda$. Now the benefit of an Evans function is twofold: First, their zeros indicate parameters where critical intervals of the dichotomy spectrum \cite{siegmund:02} split into a hyperbolic situation (cf.\ \cref{corEvans}). Second, as our essential contribution we establish in \tref{thmmain} that the parity of a path of Fredholm operators can be expressed as product of the signs of Evans functions evaluated at the boundary points of the path. Hence, based on an abstract bifurcation result culminating from \cite{Fitz91, FiPejsachowiczIV, pejsachowicz:11a, pejsachowicz:11b} it results that a sign change of $E$ is even sufficient for a whole continuum of bounded entire solutions to bifurcate. Referring to \cite{alexander:gardner:jones:90} we note that the parity is not the only topological invariant associated to the Evans function and point out that $E$ can be numerically approximated \cite{dieci:elia:vanvleck:11}.

This paper is structured as follows. The subsequent Sect.~\ref{sec2} contains necessary basics on Ca\-ra\-th{\'e}o\-dory equations \eqref{cde} and introduces an abstract parametrized operator (between spaces of essentially bounded functions), whose zeros characterize the bounded solutions of \eqref{cde}. Based on exponential dichotomy assumptions for a variation equation associated to \eqref{cde} we establish that this operator is Fredholm. Here large parts of the required Fredholm theory are admittedly akin to results for ordinary differential equations due to \cite{coppel:78,palmer:84,palmer:88}, but also for the sake of later reference in this text, we provide rather detailed proofs. Then an Evans function tailor-made for our bifurcation theory is introduced and studied in Sect.~\ref{sec3}, which results in the crucial \tref{thmmain} relating parity and Evans function. As application, Sect.~\ref{sec4} features a rather general sufficient condition for the bifurcation of bounded solutions to Carath{\'e}odory equations \eqref{cde} from a prescribed branch $\phi_\lambda$ in \tref{thmsingle}. The solutions contained in this bifurcating continuum are in fact perturbations of the $\phi_\lambda$ vanishing at $t=\pm\infty$ (one speaks of \emph{homoclinic} solutions). This bifurcation criterion is illustrated by means of two concrete examples, where the first one involves a Fredholm operator of arbitrary kernel dimension. An outlook to the scope of our approach is given in Sect.~\ref{sec5}. Finally, for the convenience of the reader, App.~\ref{appA} describes constructions of the parity and its properties, while App.~\ref{appB} presents an abstract bifurcation result suitable for applications to \eqref{cde}.
\paragraph{Notation}
We write $\R_+:=[0,\infty)$, $\R_-:=(-\infty,0]$ for the semiaxes and $\delta_{ij}$ for the Kronecker symbol. The interior and boundary of a subset $\Lambda$ of a metric space are denoted by $\Lambda^\circ$ resp.\ $\partial\Lambda$; the distance of a point $x$ to the set $\Lambda$ is $\dist_\Lambda(x):=\inf_{\lambda\in\Lambda}d(x,\lambda)$.

If $X,Y$ are Banach spaces, then $L(X,Y)$ are the linear bounded, $GL(X,Y)$ are the bounded invertible and $F_0(X,Y)$ abbreviate the index $0$ Fredholm operators from $X$ to $Y$; $I_X$ is the identity map on $X$, $N(T)$ the kernel and $R(T)$ the range of $T\in L(X,Y)$.

On the Euclidean space $\R^d$ we employ the canonical unit vectors $e_i:=(\delta_{ij})_{j=1}^d$ for $1\leq i\leq d$, the inner product $\sprod{x,y}:=\sum_{j=1}^dx_jy_j$ with induced norm $\abs{x}:=\sqrt{\sprod{x,x}}$ and denote the orthogonal complement of a subspace $V\subseteq\R^d$ by $V^\perp$. Moreover, $I_d$ and $0_d$ is the identity resp.\ zero matrix in $\R^{d\tm d}$, and $A^T$ is the transpose of a matrix $A\in\R^{d\tm d}$. We equip $\R^{d\tm d}$ with the norm induced by the Euclidean norm $\abs{\cdot}$.

Given a function $\phi:\R\to\R^d$ and $\rho>0$ we define the open $\rho$-neighborhood of its graph as
$
	\cB_\rho(\phi):=\set{(t,x)\in\R\tm\R^d:\,\abs{x-\phi(t)}<\rho}.
$
\section{Carath{\'e}odory equations and Fredholm theory}
\label{sec2}
Let $\Omega\subseteq\R^d$ be nonempty, open, convex and assume $(\tilde\Lambda,d)$ is a metric space. Our investigations center around parameter-dependent Carath{\'e}odory equations
\begin{equation}
	\tag{$C_\lambda$}
	\dot x=f(t,x,\lambda),
\end{equation}
whose right-hand side $f:\R\tm\Omega\tm\tilde\Lambda\to\R^d$ is a \emph{Carath{\'e}odory function}, i.e.\ for every parameter value $\lambda\in\tilde\Lambda$ and
\begin{itemize}
	\item for every $x\in\Omega$ the mapping $f(\cdot,x,\lambda):\R\to\R^d$ is measurable,

	\item for almost every $t\in\R$ the mapping $f(t,\cdot,\lambda):\Omega\to\R^d$ is continuous.
\end{itemize}
Throughout, measurability and integrability are understood in the Lebesgue sense. More precisely, we work under the following standing assumptions:
\begin{hypothesis*}[$\mathbf{H_0}$] The right-hand side $f:\R\tm\Omega\tm\tilde\Lambda\to\R^d$ of \eqref{cde} is a Carath{\'e}odory function with the following properties: For almost every $t\in\R$ and each $\lambda\in\tilde\Lambda$ the function $f(t,\cdot,\lambda):\Omega\to\R^d$ is differentiable with continuous partial derivative $D_2f(t,\cdot):\Omega\tm\tilde\Lambda\to\R^{d\tm d}$ such that for all bounded $B\subseteq\Omega$ and all $j\in\set{0,1}$, one has
	\begin{equation}
		\esssup_{t\in\R}\sup_{x\in B}\abs{D_2^jf(t,x,\lambda)}<\infty
		\fall\lambda\in\tilde\Lambda.
		\label{nostar1}
	\end{equation}
	Moreover, for each $\lambda_0\in\tilde\Lambda$ and $\eps>0$ there exists a $\delta>0$ with
	\begin{equation*}
		\abs{x-y}<\delta
		\quad\Rightarrow\quad
		\esssup_{t\in\R}\abs{D_2^jf(t,x,\lambda)-D_2^jf(t,y,\lambda_0)}<\eps
	\end{equation*}
	for all $x,y\in\Omega$ and $\lambda\in B_\delta(\lambda_0)$.
\end{hypothesis*}

Keeping $\lambda\in\tilde\Lambda$ fixed, a \emph{solution} to \eqref{cde} is a continuous function $\phi:I\to\Omega$ defined on an interval $I\subseteq\R$ satisfying the Volterra integral equation (cf.~\cite[Def.~2.3]{aulbach:wanner:96})
\begin{equation}
	\phi(t)=\phi(\tau)+\int_\tau^tf(s,\phi(s),\lambda)\d s\fall\tau,t\in I.
	\label{nosol}
\end{equation}
In case $I=\R$ one speaks of an \emph{entire solution} and then we denote $\phi$ as \emph{permanent}, provided $\inf_{t\in\R}\dist_{\partial\Omega}(\phi(t))>0$ holds, that is, the solution values $\phi(t)$ keep a positive distance from the boundary of $\Omega$. We denote the unique solution to \eqref{cde} satisfying the initial condition $x(\tau)=\xi$ as \emph{general solution} $\varphi_\lambda(\cdot;\tau,\xi)$, where $(\tau,\xi)\in\R\tm\Omega$.

\begin{hypothesis*}[$\mathbf{H_1}$] The Carath{\'e}odory equation \eqref{cde} has a family $(\phi_\lambda)_{\lambda\in\tilde\Lambda}$ of bounded permanent solutions $\phi_\lambda:\R\to\Omega$ such that for every $\eps>0$, $\lambda_0\in\tilde\Lambda$ there is a $\delta>0$ with
	\begin{equation*}
		d(\lambda,\lambda_0)<\delta
		\quad\Rightarrow\quad
		\sup_{t\in\R}\abs{\phi_\lambda(t)-\phi_{\lambda_0}(t)}<\eps\fall\lambda\in\tilde\Lambda
	\end{equation*}
	and there exists a $\bar\rho>0$ with $\inf_{t\in\R}\dist_{\partial\Omega}(\phi_\lambda(t))>\bar\rho$ for all $\lambda\in\tilde\Lambda$.
\end{hypothesis*}
In this context, an entire solution $\phi:\R\to\Omega$ to \eqref{cde} is called \emph{homoclinic} to $\phi_\lambda$, provided the limit relations $\lim_{t\to\pm\infty}\abs{\phi(t)-\phi_\lambda(t)}=0$ hold.

Central parts of our theory are based on linearization. This involves the \emph{variation equations} corresponding to the solution family $(\phi_\lambda)_{\lambda\in\tilde\Lambda}$ given by
\begin{align}
	\tag{$V_\lambda$}
	\dot x&=A(t,\lambda)x,&
	A(t,\lambda)&:=D_2f(t,\phi_\lambda(t),\lambda)
	\label{var}
\end{align}
with coefficient matrices $A:\R\tm\tilde\Lambda\to\R^{d\tm d}$ having the immediate properties:
\begin{lemma}\label{lemaprop}
	If $(H_0$--$H_1)$ hold, then
	\begin{enumerate}
		\item[$(a)$] $A(\cdot,\lambda):\R\to\R^{d\tm d}$ is essentially bounded and thus locally integrable for $\lambda\in\tilde\Lambda$,

		\item[$(b)$] $A(t,\cdot):\tilde\Lambda\to\R^{d\tm d}$ is continuous for a.a.\ $t\in\R$.
	\end{enumerate}
\end{lemma}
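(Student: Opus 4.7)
Both assertions should follow almost directly from the standing hypotheses; the only care needed is to keep track of which piece of $(H_0)$, resp.\ $(H_1)$, is being used for each claim.

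For part (a), I would fix $\lambda\in\tilde\Lambda$ and note that, by $(H_1)$, the image $B:=\phi_\lambda(\R)$ is a bounded subset of $\Omega$. Applying \eqref{nostar1} with $j=1$ to this set $B$ yields
\begin{equation*}
	\esssup_{t\in\R}\abs{A(t,\lambda)}
	=\esssup_{t\in\R}\abs{D_2f(t,\phi_\lambda(t),\lambda)}
	\leq\esssup_{t\in\R}\sup_{x\in B}\abs{D_2f(t,x,\lambda)}<\infty,
\end{equation*}
so $A(\cdot,\lambda)$ is essentially bounded; local integrability then follows from the trivial estimate $\int_a^b\abs{A(t,\lambda)}\d t\leq (b-a)\norm{A(\cdot,\lambda)}_{L^\infty}$ on every bounded interval $[a,b]\subset\R$. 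The one nontrivial point is measurability of $t\mapsto A(t,\lambda)$: since $D_2f(\cdot,x,\lambda)$ is measurable (it is the a.e.\ limit of difference quotients of the measurable functions $f(\cdot,x,\lambda)$) and $D_2f(t,\cdot,\lambda)$ is continuous for a.a.\ $t$, the map $D_2f(\cdot,\cdot,\lambda)$ is a Carath\'eodory function. Composition with the continuous (hence measurable) curve $t\mapsto\phi_\lambda(t)$ thus produces a measurable function via the standard Carath\'eodory superposition result.

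For part (b), I would fix $\lambda_0\in\tilde\Lambda$ and exploit the joint continuity of $D_2f(t,\cdot,\cdot)$ on $\Omega\tm\tilde\Lambda$ that is built into $(H_0)$. This joint continuity holds on a full-measure set $T\subseteq\R$. For $t\in T$ the continuity of $\lambda\mapsto A(t,\lambda)$ at $\lambda_0$ reduces to continuity of the composition
\begin{equation*}
	\lambda\mapsto\bigl(\phi_\lambda(t),\lambda\bigr)\mapsto D_2f\bigl(t,\phi_\lambda(t),\lambda\bigr),
\end{equation*}
and the first arrow is continuous because $(H_1)$ guarantees uniform (hence in particular pointwise) convergence $\phi_\lambda(t)\to\phi_{\lambda_0}(t)$ as $\lambda\to\lambda_0$. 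Since $\R\setminus T$ has measure zero, this proves continuity of $A(t,\cdot)$ for a.a.\ $t\in\R$.

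There is no real obstacle here; the only place where one has to be mildly attentive is the measurability of $t\mapsto D_2f(t,\phi_\lambda(t),\lambda)$, where the Carath\'eodory property of $D_2f$ must be invoked rather than just the Carath\'eodory property of $f$ itself. One could alternatively derive a stronger uniform version of (b) (continuity of $\lambda\mapsto A(\cdot,\lambda)$ as a map into $L^\infty(\R,\R^{d\tm d})$) directly from the uniform modulus assumption on $D_2f$ in $(H_0)$ combined with $(H_1)$, but pointwise continuity for a.a.\ $t$ is all that is claimed and all that is needed for the remainder of the paper.
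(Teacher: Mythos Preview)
Your proposal is correct; the paper itself omits the proof of this lemma entirely, treating both assertions as immediate consequences of $(H_0)$--$(H_1)$. Your argument supplies exactly the details one would expect (boundedness of $\phi_\lambda(\R)$ combined with \eqref{nostar1} for part~(a), and joint continuity of $D_2f(t,\cdot,\cdot)$ together with the uniform continuity of $\lambda\mapsto\phi_\lambda$ for part~(b)), so there is nothing to compare against.
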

Hence, the \emph{transition matrix} $\Phi_\lambda(t,s)\in GL(\R^d,\R^d)$, $t,s\in\R$, of \eqref{var} is well-defined and due to \cite[Lemma~2.9]{aulbach:wanner:96} of \emph{bounded growth}, i.e.\
\begin{equation*}
	\abs{\Phi_\lambda(t,s)}\leq\exp\intoo{\esssup_{r\in\R}\abs{A(r,\lambda)}\abs{t-s}}\fall s,t\in\R,\,\lambda\in\tilde\Lambda.
\end{equation*}

For $\lambda\in\tilde\Lambda$ fixed again, a solution $\phi_\lambda:\R\to\Omega$ is understood as \emph{hyperbolic} on a subinterval $I\subseteq\R$, if the associated variation equation \eqref{var} is \emph{exponentially dichotomic} on~$I$. This means there exist reals $K\geq 1$, \emph{growth rates} $\alpha>0$ and a projection-valued function $P_{\lambda}:I\to\R^{d\tm d}$ such that
\begin{equation}
	\Phi_\lambda(t,s)P_\lambda(s)=P_\lambda(t)\Phi_\lambda(t,s)
	\label{ed0}
\end{equation}
(one speaks of an \emph{invariant projector}) and
\begin{align}
	\abs{\Phi_\lambda(t,s)P_\lambda(s)}&\leq Ke^{-\alpha(t-s)},&
	\abs{\Phi_\lambda(s,t)[I_d-P_\lambda(t)]}&\leq Ke^{-\alpha(t-s)}
	\label{ed1}
\end{align}	
for all $s\leq t$, $t,s\in I$. The \emph{dichotomy spectrum} of \eqref{var} is given by (cf.~\cite{siegmund:02})
\begin{equation*}
	\Sigma(\lambda)
	:=
	\set{\gamma\in\R:\,\dot x=[A(t,\lambda)-\gamma I_d]x\text{ has no exponential dichotomy on }\R}
\end{equation*}
and consist of $d_0\in\set{1,\ldots,d}$ compact \emph{spectral intervals} $\sigma_j\subseteq\R$, i.e.\ $\Sigma(\lambda)=\bigcup_{j=1}^{d_0}\sigma_j$ (cf.~\cite[Thm.~3.1]{siegmund:02}). To each $\sigma_j$ one associates a spectral manifold $\cV_j\subseteq\R\tm\R^d$, which is an invariant bundle of subspaces of $\R^d$ having constant dimension called \emph{algebraic multiplicity} $\mu_j\in\set{1,\ldots,d}$ of the spectral interval $\sigma_j$, $1\leq j\leq d_0$.

Note that the ranges $R(P_\lambda(\tau))$, $\tau\in I$, are uniquely determined on intervals $I$ unbounded above, while the kernels $N(P_\lambda(\tau))$, $\tau\in I$, are unique on intervals $I$ unbounded below. Given this, we introduce the \emph{Morse index} (note that it is independent of $\tau\in I$)
\begin{equation*}
	m_\lambda
	:\equiv
	\begin{cases}
		\dim N(P_\lambda(\tau)),&\text{ if $I$ is unbounded below},\\
		d-\dim R(P_\lambda(\tau)),&\text{ if $I$ is unbounded above}.
	\end{cases}
\end{equation*}
In particular, one has the respective dynamical characterizations (cf.~\cite[p.~19]{coppel:78})
\begin{align}\label{R-N}
	\begin{split}
		R(P_\lambda(\tau))
		&=
		\set{\xi\in\R^d:\,\sup_{\tau\leq t}e^{\gamma(\tau-t)}|\Phi_\lambda(t,\tau)\xi|<\infty}
		\fall\gamma\in[-\alpha,\alpha),\\
		N(P_\lambda(\tau))
		&=
		\set{\xi\in\R^d:\,\sup_{t\leq\tau}e^{\gamma(\tau-t)}|\Phi_\lambda(t,\tau)\xi|<\infty}
		\fall\gamma\in(-\alpha,\alpha]
	\end{split}
\end{align}
and $\tau\in I$. Eventually, it is convenient to introduce the \emph{Green's function}
\begin{equation}
	\Gamma_{P_\lambda}(t,s)
	:=
	\begin{cases}
		\Phi_\lambda(t,s)P_\lambda(s),&s\leq t,\\
		-\Phi_\lambda(t,s)[I_d-P_\lambda(s)],&t<s
	\end{cases}
	\fall s,t\in I.
	\label{nogreen}
\end{equation}

Our functional analytical approach requires a suitable setting of functions defined on an interval $I\subseteq\R$. We write $L^\infty(I,\Omega)$ for the essentially bounded and $W^{1,\infty}(I,\Omega)$ for $L^\infty$-functions $x: I\to\Omega$ with essentially bounded (weak) derivatives. In case $\Omega=\R^d$ we write $L^\infty(I):=L^\infty(I,\R^d)$ and proceed accordingly with further function spaces. Note that $L^\infty(I)$ is a Banach space w.r.t.\ the norm
$
	\norm{x}_\infty:=\esssup_{t\in I}\abs{x(t)}.
$

Each $x\in W^{1,\infty}(I)$ has a bounded Lipschitz continuous representative (cf.\ \cite[p.~224, Thm.~7.17]{leoni:09}), while Rademacher's theorem \cite[p.~343, Thm.~11.49]{leoni:09} yields that the (strong) derivative $\dot x:I\to\R^d$ exists a.e.\ in $I\subseteq\R$. From \cite[p.~224, Ex.~7.18]{leoni:09} we obtain that $W^{1,\infty}(I)$ is a Banach space with
$
	\norm{x}_{1,\infty}:=\max\set{\norm{x}_\infty,\norm{\dot x}_\infty}
$
as norm.

Clearly, $W^{1,\infty}(I)\subseteq L^\infty(I)$ is a continuous embedding. Finally, on the interval $I=\R$ and for $0\in\Omega$ we introduce the respective subsets
\begin{align*}
	L_0^\infty(\R,\Omega)
	&:=
	\set{x\in L^\infty(\R,\Omega)\mid\forall\eps>0\exists T>0:\,\abs{x(t)}<\eps\text{ a.e.\ in }\R\setminus(-T,T)},\\
	W_0^{1,\infty}(\R,\Omega)
	&:=
	\set{x\in W^{1,\infty}(\R,\Omega)\mid x,\dot x\in L_0^\infty(\R)}.
\end{align*}
Then the continuous embeddings $W^{1,\infty}(\R)\subseteq L^\infty(\R)$ and $W_0^{1,\infty}(\R)\subseteq L_0^\infty(\R)$ hold.

We characterize bounded entire solutions of Carath{\'e}odory equations \eqref{cde}, as well as solutions being homoclinic to the family $\phi_\lambda$ from Hypothesis $(H_1)$, as zeros
\begin{equation}
	\tag{$O_\lambda$}
	G(x,\lambda)=0
	\label{abs}
\end{equation}
of the formally defined abstract nonlinear operator
\begin{equation*}
	[G(x,\lambda)](t):=\dot x(t)-f(t,x(t)+\phi_\lambda(t),\lambda)+f(t,\phi_\lambda(t),\lambda).
\end{equation*}
One clearly obtains the identity $G(0,\lambda)\equiv 0$ on $\tilde\Lambda$.
\begin{theorem}\label{thmgprop}
	If $(H_0$--$H_1)$ hold, then
	\begin{enumerate}
		\item[$(a)$] $G:U\tm\tilde\Lambda\to L^\infty(\R)$ is well-defined on $U:=\bigl\{x\in W^{1,\infty}(\R):\,\norm{x}_\infty<\bar\rho\bigr\}$, con\-ti\-nuous and the partial derivative $D_1G:U^\circ\tm\tilde\Lambda\to L(W^{1,\infty}(\R),L^\infty(\R))$ exists as a continuous function,

		\item[$(b)$] $G:U\tm\tilde\Lambda\to L_0^\infty(\R)$ is well-defined on $U:=\bigl\{x\in W_0^{1,\infty}(\R):\,\norm{x}_\infty<\bar\rho\bigr\}$, con\-tinuous and the partial derivative $D_1G:U^\circ\tm\tilde\Lambda\to L(W_0^{1,\infty}(\R),L_0^\infty(\R))$ exists as a continuous function.
	\end{enumerate}
	Moreover, in both cases and for $\lambda\in\tilde\Lambda$ the partial derivative is given by
	\begin{equation}
		[D_1G(x,\lambda)y](t)=\dot y(t)-D_2f(t,x(t)+\phi_\lambda(t),\lambda)y(t)
		\quad\text{for a.a.\ }t\in\R.
		\label{thmgprop1}
	\end{equation}
\end{theorem}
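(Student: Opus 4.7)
My plan is to reduce every assertion to a Nemytskii-type estimate uniform in $t\in\R$, using the fundamental theorem of calculus to convert differences of $f$ into integrals of $D_2f$, so that all bounds and moduli of continuity come directly from Hypothesis $(H_0)$, while the dependence of $\phi_\lambda$ on $\lambda$ is controlled through Hypothesis $(H_1)$.

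For well-definedness I would start from the identity
\begin{equation*}
	f(t,x(t)+\phi_\lambda(t),\lambda)-f(t,\phi_\lambda(t),\lambda)=\int_0^1 D_2f(t,\phi_\lambda(t)+sx(t),\lambda)x(t)\d s.
\end{equation*}
Since $\norm{x}_\infty<\bar\rho$ and $\dist_{\partial\Omega}(\phi_\lambda(t))>\bar\rho$ by $(H_1)$, the argument $\phi_\lambda(t)+sx(t)$ lies in a fixed bounded convex subset of $\Omega$, so \eqref{nostar1} with $j=1$ gives a pointwise bound $C\abs{x(t)}$ with $C$ independent of $t$. Together with $\dot x\in L^\infty(\R)$ this yields $G(x,\lambda)\in L^\infty(\R)$. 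For part $(b)$ the same estimate transfers decay at $\pm\infty$ from $x\in L_0^\infty(\R)$ to the nonlinear part of $G(x,\lambda)$, while $\dot x\in L_0^\infty(\R)$ is built into the definition of $W_0^{1,\infty}(\R)$. Continuity of $G$ at $(x_0,\lambda_0)$ I would then deduce from the uniform-continuity clause of $(H_0)$ with $j=0$: fix $\eps>0$, choose $\delta$ as there, shrink $\delta$ so that $(H_1)$ yields $\norm{\phi_\lambda-\phi_{\lambda_0}}_\infty<\delta/2$ for $d(\lambda,\lambda_0)<\delta$, and require $\norm{x-x_0}_\infty<\delta/2$; this forces $\esssup_{t\in\R}\abs{f(t,x(t)+\phi_\lambda(t),\lambda)-f(t,x_0(t)+\phi_{\lambda_0}(t),\lambda_0)}<\eps$, and the analogous inequality with $x,x_0$ replaced by $0$.

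For the derivative I would take the right-hand side of \eqref{thmgprop1} as a candidate and observe that multiplication by $D_2f(\cdot,x(\cdot)+\phi_\lambda(\cdot),\lambda)$ is a bounded operator $W^{1,\infty}(\R)\to L^\infty(\R)$ thanks to \eqref{nostar1} with $j=1$. The Fréchet remainder is
\begin{equation*}
	G(x+h,\lambda)-G(x,\lambda)-D_1G(x,\lambda)h=-\int_0^1\bigl[D_2f(t,x+\phi_\lambda+sh,\lambda)-D_2f(t,x+\phi_\lambda,\lambda)\bigr]h\d s,
\end{equation*}
whose essential supremum is at most $\omega(\norm{h}_\infty)\norm{h}_\infty$, where $\omega$ is the modulus of continuity supplied by the uniform-continuity clause of $(H_0)$ with $j=1$ (applied at $\lambda_0=\lambda$). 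This yields Fréchet differentiability together with the formula \eqref{thmgprop1}. The very same Nemytskii estimate, one level up, shows that $(x,\lambda)\mapsto D_2f(\cdot,x(\cdot)+\phi_\lambda(\cdot),\lambda)$ is continuous into $L^\infty(\R,\R^{d\tm d})$, hence $D_1G$ is continuous into $L(W^{1,\infty}(\R),L^\infty(\R))$, using that $W^{1,\infty}(\R)\hookrightarrow L^\infty(\R)$ has norm $1$. Part $(b)$ is identical, once one notes that multiplication by an $L^\infty$-matrix preserves $L_0^\infty(\R)$.

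The main obstacle I anticipate is purely organisational: every estimate must be uniform in $t\in\R$, which is exactly the purpose of the essential-supremum and uniform-in-$t$ continuity built into $(H_0)$. No single step is delicate on its own, but the whole argument collapses as soon as one relaxes to $t$-local or merely pointwise-in-$t$ control of $D_2f$.
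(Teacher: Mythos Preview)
Your proposal is correct and is precisely the standard Nemytskii-operator argument the paper has in mind; the paper itself does not spell out a proof but merely cites \cite[Cor.~2.1]{poetzsche:12}, whose content is exactly the mean-value/integral-remainder scheme you describe. The only point worth tightening is that the bounded set $B\subset\Omega$ carrying the $\esssup$-bound should be chosen uniformly for $(x,\lambda)$ in a neighborhood of $(x_0,\lambda_0)$, which follows from the uniform clause in $(H_1)$ together with $\norm{x}_\infty<\bar\rho$; you implicitly use this for continuity of $G$ and $D_1G$, and it is harmless once stated.
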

\begin{proof}
	The argument essentially follows \cite[Cor.~2.1]{poetzsche:12}.
\end{proof}

\begin{theorem}\label{thmchar}
	If $(H_0$--$H_1)$ hold, then $\phi_\lambda\in W^{1,\infty}(\R,\Omega)$ and also the following is true for all parameters $\lambda\in\tilde\Lambda$:
	\begin{enumerate}
		\item[$(a)$] If $\phi:\R\to\Omega$ is a bounded solution of \eqref{cde} in $\cB_{\bar\rho}(\phi_\lambda)$, then $\phi-\phi_\lambda$ is contained in $W^{1,\infty}(\R)$ and satisfies \eqref{abs}. Conversely, if $\psi\in L^\infty(\R)$ has a (strong) derivative a.e.\ in $\R$ with $\norm{\psi}_\infty<\bar\rho$ and satisfies $G(\psi,\lambda)=0$, then $\psi\in W^{1,\infty}(\R)$ and $\psi+\phi_\lambda$ is a bounded entire solution of \eqref{cde} in $\cB_{\bar\rho}(\phi_\lambda)$.

		\item[$(b)$] If $\phi:\R\to\Omega$ is a solution of \eqref{cde} in $\cB_{\bar\rho}(\phi_\lambda)$ homoclinic to $\phi_\lambda$, then $\phi-\phi_\lambda$ is contained in $W_0^{1,\infty}(\R)$ and satisfies \eqref{abs}. Conversely, if $\psi\in L_0^\infty(\R)$ has a (strong) derivative a.e.\ in $\R$ with $\norm{\psi}_\infty<\bar\rho$ and satisfies $G(\psi,\lambda)=0$, then $\psi\in W_0^{1,\infty}(\R)$ and $\psi+\phi_\lambda$ is a solution of \eqref{cde} in $\cB_{\bar\rho}(\phi_\lambda)$ homoclinic to $\phi_\lambda$.
	\end{enumerate}
\end{theorem}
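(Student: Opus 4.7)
The content of the statement is really a regularity claim: the equivalence between the pointwise ODE (or Volterra identity \eqref{nosol}) and the abstract equation $G(\psi,\lambda)=0$ is essentially tautological, but one has to upgrade ``$\psi\in L^\infty$ with an a.e.\ derivative'' to $\psi\in W^{1,\infty}$, and in part (b) additionally propagate the decay-at-infinity property. The plan is to begin by establishing $\phi_\lambda\in W^{1,\infty}(\R,\Omega)$ and, more generally, that any bounded entire solution of \eqref{cde} whose range stays in a bounded subset of $\Omega$ is of class $W^{1,\infty}$: by \eqref{nosol} the strong derivative exists a.e.\ and equals $f(t,\phi(t),\lambda)$, and the $j=0$ bound in \eqref{nostar1} then gives $\dot\phi\in L^\infty(\R)$.

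For part~(a), the forward direction proceeds as follows: if $\phi$ is a solution with $\norm{\phi-\phi_\lambda}_\infty<\bar\rho$, the permanence bound in $(H_1)$ keeps $\phi$ in a bounded subset of $\Omega$, so the preliminary step yields $\phi\in W^{1,\infty}(\R,\Omega)$ and hence $\psi:=\phi-\phi_\lambda\in W^{1,\infty}(\R)$; subtracting $\dot\phi_\lambda(t)=f(t,\phi_\lambda(t),\lambda)$ from $\dot\phi(t)=f(t,\phi(t),\lambda)$ is exactly $G(\psi,\lambda)=0$. The converse is equally direct: if $\norm{\psi}_\infty<\bar\rho$, $\psi$ has a strong derivative a.e.\ and $G(\psi,\lambda)=0$, then $\psi(t)+\phi_\lambda(t)\in\Omega$ and lies in a bounded set thereof, so \eqref{nostar1} bounds both $f$-terms occurring in $G(\psi,\lambda)$, forcing $\dot\psi\in L^\infty(\R)$. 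The equation $G(\psi,\lambda)=0$ then rewrites as the pointwise ODE satisfied by $\phi:=\psi+\phi_\lambda$, and integration recovers \eqref{nosol}.

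Part~(b) follows the same scheme, with the extra twist that one must transport the vanishing at $\pm\infty$ through the nonlinearity in both directions. Here the key ingredient is a Lipschitz estimate obtained from the $j=1$ case of \eqref{nostar1} and convexity of $\Omega$: since the segment from $\phi_\lambda(t)$ to $\psi(t)+\phi_\lambda(t)$ is contained in $\Omega$, the integral form of the mean-value inequality yields a constant $M>0$ with
\[
	\abs{f(t,\psi(t)+\phi_\lambda(t),\lambda)-f(t,\phi_\lambda(t),\lambda)}\leq M\abs{\psi(t)}
	\quad\text{for a.a.\ }t\in\R.
\]
Because $G(\psi,\lambda)=0$ identifies $\dot\psi(t)$ with the left-hand side, $\psi\in L_0^\infty(\R)$ immediately forces $\dot\psi\in L_0^\infty(\R)$, while conversely the homoclinic convergence $\abs{\phi(t)-\phi_\lambda(t)}\to 0$ is precisely $\psi\in L_0^\infty(\R)$, and the same bound then gives the derivative decay. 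The only nontrivial point in the proof is thus this Lipschitz estimate; everything else amounts to carefully unpacking the two function-space definitions.
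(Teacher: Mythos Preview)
Your proposal is correct and follows essentially the same route as the paper. The only organizational difference is that for the forward direction of~(a) you bound $\dot\phi$ and $\dot\phi_\lambda$ separately via the $j=0$ case of \eqref{nostar1} and then subtract, whereas the paper bounds $\dot\delta=\dot\phi-\dot\phi_\lambda$ directly using the Mean Value Theorem and the $j=1$ case; since the paper already has the Lipschitz estimate in hand after~(a), it can dispatch~(b) with the single word ``analogously'', while you (correctly) introduce the MVT argument only when it is actually needed in~(b).
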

\begin{proof}
	Let $\lambda\in\tilde\Lambda$ be fixed. The assumption $(H_1)$ directly yields $\phi_\lambda\in L^\infty(\R,\Omega)$. As a solution to \eqref{cde}, $\phi_\lambda$ is absolutely continuous, hence the strong derivative $\dot\phi_\lambda$ exists a.e.\ in $\R$ and satisfies $\dot\phi_\lambda(t)\equiv f(t,\phi_\lambda(t),\lambda)$. Thus, since \eqref{nostar1} yields that $f(\cdot,\phi_\lambda(\cdot),\lambda)$ is essentially bounded on $\R$ and we deduce $\dot\phi_\lambda\in L^\infty(\R)$, i.e.\ $\phi_\lambda\in W^{1,\infty}(\R,\Omega)$.

	(a) Assume $\phi\in L^\infty(\R,\Omega)$ is an entire solution of \eqref{cde} in $\cB_{\bar\rho}(\phi_\lambda)$. Then $\phi$ and $\phi_\lambda$ both satisfy \eqref{nosol} and the Fundamental Theorem of Calculus \cite[p.~85, Thm.~3.30]{leoni:09} yields that $\phi,\phi_\lambda$ are absolutely continuous (on any bounded subinterval of $\R$). This yields that the strong derivatives $\dot\phi$, $\dot\phi_\lambda$ exist a.e.\ in $\R$. Thus, $\delta:=\phi-\phi_\lambda\in L^\infty(\R)$ fulfills the identity $\dot\delta(t)+\dot\phi_\lambda(t)\equiv f(t,\delta(t)+\phi_\lambda(t),\lambda)$ a.e.\ on $\R$. Hence, $\norm{\phi-\phi_\lambda}_\infty<\bar\rho$ and the fact
	$
		\dot\delta(t)
		\equiv
		f(t,\delta(t)+\phi_\lambda(t),\lambda)-f(t,\phi_\lambda(t),\lambda)
	$
	a.e.\ on $\R$ has two consequences: First, there exists a bounded set $B\subseteq\Omega$ such that the inclusion $\phi_\lambda(t)+\theta\delta(t)\in B$ holds for all $t\in\R$, $\theta\in[0,1]$ due to the convexity of $\Omega$. Whence the Mean Value Theorem \cite[p.~243, Thm.~4.C for $n=1$]{zeidler:95} implies
	\begin{align*}
		\abs{\dot\delta(t)}
		&=
		\abs{\int_0^1D_2f(t,\theta\delta(t)+\phi_\lambda(t),\lambda)\d\theta\delta(t)}\\
		&\leq
		\int_0^1\abs{D_2f(t,\theta\delta(t)+\phi_\lambda(t),\lambda)\d\theta}\norm{\delta}_\infty
	\end{align*}
	and thanks to $(H_0)$ the right-hand side of this inequality is essentially bounded in $t\in\R$, i.e.\ $\delta\in W^{1,\infty}(\R)$ holds. Second, $\delta$ defines an entire solution of the equation of perturbed motion $\dot x=f(t,x+\phi_\lambda(t),\lambda)-f(t,\phi_\lambda(t),\lambda)$, which in turn implies $G(\delta,\lambda)=0$.

	Conversely, let $\psi\in L^\infty(\R)$ be strongly differentiable a.e.\ in $\R$ with $\norm{\psi}_\infty<\bar\rho$ and $G(\psi,\lambda)=0$, i.e.\ $\dot\psi(t)=f(t,\psi(t)+\phi_\lambda(t),\lambda)-f(t,\phi_\lambda(t),\lambda)$ holds for a.a.\ $t\in\R$. First, $\dot\psi(t)+\dot\phi_\lambda(t)=f(t,\psi(t)+\phi_\lambda(t),\lambda)$ a.e.\ in $\R$ implies that $\psi+\phi_\lambda$ is a bounded entire solution of \eqref{cde} in $\cB_{\bar\rho}(\phi_\lambda)$. Second, as above one establishes $\dot\psi\in L^\infty(\R)$ and therefore the inclusion $\psi\in W^{1,\infty}(\R)$ results.

	(b) can be shown analogously.
\end{proof}

\begin{theorem}[admissibility]\label{thmadmin}
	If $(H_0$--$H_1)$ hold, then the following are equivalent for all parameters $\lambda\in\tilde\Lambda$:
	\begin{enumerate}
		\item[$(a)$] $D_1G(0,\lambda)\in GL(W^{1,\infty}(\R),L^\infty(\R))$,

		\item[$(b)$] $D_1G(0,\lambda)\in GL(W_0^{1,\infty}(\R),L_0^\infty(\R))$,

		\item[$(c)$] the bounded entire solution $\phi_\lambda:\R\to\Omega$ to \eqref{cde} is hyperbolic on $\R$.
	\end{enumerate}
\end{theorem}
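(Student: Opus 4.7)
The plan is to reduce everything to the linear operator $L_\lambda:=D_1G(0,\lambda)$, which by \eqref{thmgprop1} acts as $(L_\lambda y)(t)=\dot y(t)-A(t,\lambda)y(t)$ with $A$ as in \eqref{var}. By \lref{lemaprop} we have $A(\cdot,\lambda)\in L^\infty(\R,\R^{d\tm d})$, so $L_\lambda$ is certainly bounded on both pairs of spaces. The theorem is then a (known) admissibility theorem of Coppel--Palmer type \cite{coppel:78,palmer:84,palmer:88}, and its proof splits into three independent tasks: (c)$\Rightarrow$(a), (a)$\Rightarrow$(c), and (a)$\Leftrightarrow$(b).

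For (c)$\Rightarrow$(a), I would exhibit a bounded inverse $S:L^\infty(\R)\to W^{1,\infty}(\R)$ via the Green's function \eqref{nogreen},
\[
(Sg)(t):=\int_\R\Gamma_{P_\lambda}(t,s)g(s)\d s.
\]
Splitting the integral at $s=t$ and applying \eqref{ed1} gives $\norm{Sg}_\infty\leq\tfrac{2K}{\alpha}\norm{g}_\infty$; differentiating under the integral and exploiting the unit jump $P_\lambda(s)-(-(I_d-P_\lambda(s)))=I_d$ of $\Gamma_{P_\lambda}(\cdot,s)$ at $s$ shows $\tfrac{d}{dt}(Sg)=A(\cdot,\lambda)Sg+g$ a.e., so that $Sg\in W^{1,\infty}(\R)$ with $L_\lambda Sg=g$. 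Injectivity is immediate from \eqref{R-N}: every $y\in N(L_\lambda)\cap W^{1,\infty}(\R)$ satisfies $y(t)=\Phi_\lambda(t,0)y(0)$ with $y(0)\in R(P_\lambda(0))\cap N(P_\lambda(0))=\set{0}$.

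For (a)$\Rightarrow$(c) I would follow the classical scheme. Define
\[
X_s(\tau):=\set{\xi\in\R^d:\,\sup_{t\geq\tau}\abs{\Phi_\lambda(t,\tau)\xi}<\infty},\qquad
X_u(\tau):=\set{\xi\in\R^d:\,\sup_{t\leq\tau}\abs{\Phi_\lambda(t,\tau)\xi}<\infty},
\]
and use invertibility of $L_\lambda$ to show $\R^d=X_s(\tau)\oplus X_u(\tau)$: injectivity forces the sum to be direct (a nontrivial intersection yields a nonzero bounded homogeneous solution), while for the sum being all of $\R^d$ one prescribes, for any $\xi\in\R^d$ and cutoff function $\chi$, the forcing $g(t):=\chi'(t)\Phi_\lambda(t,\tau)\xi$ supported near $\tau$, inverts $L_\lambda$, and reads off a decomposition of $\xi$ from the jump of the resulting bounded solution at $\tau$. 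The projector $P_\lambda(\tau)$ onto $X_s(\tau)$ along $X_u(\tau)$ is then invariant by construction; uniform bounds on $P_\lambda$ and the exponential rate $\alpha>0$ in \eqref{ed1} are extracted from the operator bound $\norm{L_\lambda^{-1}}$ by the standard two-step argument (first uniform boundedness of $\Phi_\lambda(t,\tau)P_\lambda(\tau)$ for $t\geq\tau$ by a $\delta$-peak forcing, then exponential decay by applying the same admissibility with the shifted system $\dot x=[A(t,\lambda)-\gamma I_d]x$ for small $\gamma>0$, which is a small $L^\infty$-perturbation and hence still admissible).

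Finally (a)$\Leftrightarrow$(b) I would deduce from the explicit inverse $S$. If $g\in L_0^\infty(\R)$, the exponential decay of $\Gamma_{P_\lambda}(t,s)$ in $\abs{t-s}$ gives $(Sg)(t)\to 0$ as $\abs{t}\to\infty$ by a routine split-and-dominate estimate, and then the identity $\tfrac{d}{dt}Sg=A(\cdot,\lambda)Sg+g$ together with $A(\cdot,\lambda)\in L^\infty$ forces also $\tfrac{d}{dt}Sg\in L_0^\infty(\R)$, so $S$ restricts to an inverse $L_0^\infty(\R)\to W_0^{1,\infty}(\R)$. Conversely, $y\in W_0^{1,\infty}(\R)$ immediately yields $L_\lambda y\in L_0^\infty(\R)$, closing the circle.

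I expect the genuine work to sit in (a)$\Rightarrow$(c), specifically in the passage from mere uniform boundedness of $\Phi_\lambda(t,\tau)P_\lambda(\tau)$ to true exponential decay; the rest is Green's-function bookkeeping. The measurable (Carath\'eodory) regularity of $A(\cdot,\lambda)$ is harmless throughout because all estimates are integral ones.
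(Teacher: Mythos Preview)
Your route is sound but structured differently from the paper's. For $(c)\Rightarrow(a)$ you do exactly what the paper does (explicit inverse via the Green's function $\Gamma_{P_\lambda}$). For the converse, however, the paper does \emph{not} carry out the Coppel--Palmer construction you sketch: it closes the cycle as $(c)\Rightarrow(a)\Rightarrow(b)\Rightarrow(c)$, where $(a)\Rightarrow(b)$ is just the verification that this same $S$ maps $L_0^\infty$ into $W_0^{1,\infty}$ via an $\eps$--$T$ splitting of the convolution integrals, and $(b)\Rightarrow(c)$ is \emph{outsourced} to the abstract admissibility machinery of Sasu~\cite{sasu:07,sasu:10} (the paper checks that $L_0^\infty(\R)$ satisfies the five structural hypotheses required there) together with Palmer~\cite{palmer:88}. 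Your direct construction is more self-contained and yields dichotomy constants explicitly in terms of $\norm{L_\lambda^{-1}}$, at the price of actually carrying out the boundedness-to-decay upgrade you correctly flag as the genuine work.

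There is one logical gap in your outline. The sentence ``Conversely, $y\in W_0^{1,\infty}(\R)$ immediately yields $L_\lambda y\in L_0^\infty(\R)$, closing the circle'' does not prove $(b)\Rightarrow(a)$: it only says $L_\lambda$ is well-defined on the small pair, not that invertibility on $(W_0^{1,\infty},L_0^\infty)$ implies invertibility on $(W^{1,\infty},L^\infty)$. The fix is already in your hands: the forcings you use in the $(a)\Rightarrow(c)$ argument --- the cutoff derivative $\chi'(\cdot)\Phi_\lambda(\cdot,\tau)\xi$ and the $\delta$-peak --- are compactly supported and hence lie in $L_0^\infty$, so the identical argument proves $(b)\Rightarrow(c)$ as well; then $(c)\Rightarrow(a)$ via $S$ completes the equivalence.
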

\begin{proof}
	Throughout, let $\lambda\in\tilde\Lambda$ be fixed.

	$(c)\Rightarrow(a)$ Because $\phi_\lambda$ is hyperbolic, \eqref{var} has an exponential dichotomy on $\R$ with projector $P_\lambda$ and growth rate $\alpha>0$. Due to the explicit form \eqref{thmgprop1} from \tref{thmgprop} the invertibility of the Fr{\'e}chet derivative $D_1G(0,\lambda)$ means that for each $g\in L^\infty(\R)$ there exists a unique solution $\psi\in W^{1,\infty}(\R)$ of the perturbed variation equation
	\begin{equation}
		\tag{$V_{\lambda,g}$}
		\dot x=A(t,\lambda)x+g(t).
		\label{linb}
	\end{equation}
	In order to verify this, with Green's function \eqref{nogreen} we define
	\begin{align*}
		\psi:\R&\to\R^d,&
		\psi(t)&:=\int_\R\Gamma_{P_\lambda}(t,s)g(s)\d s.
	\end{align*}
	As in \cite[proof of Lemma~3.2]{aulbach:wanner:96} one establishes that $\psi$ is actually a solution of \eqref{linb}. Moreover, the dichotomy estimates \eqref{ed1} yield $\psi\in L^\infty(\R)$. Hence, since the solution identity $\dot\psi(t)\equiv A(t,\lambda)\psi(t)+g(t)$ holds a.e.\ in $\R$ we obtain from \lref{lemaprop} and the inclusion $g\in L^\infty(\R)$ that also $\dot\psi$ is essentially bounded, i.e.\ $\psi\in W^{1,\infty}(\R)$. It remains to show that $\psi$ is uniquely determined by the inhomogeneity $g\in L^\infty(\R)$. If $\bar\psi\in L^\infty(\R)$ is another bounded entire solution to \eqref{linb}, then the difference $\psi-\bar\psi\in L^\infty(\R)$ solves the variation equation \eqref{var}. Due to our hyperbolicity assumption, \eqref{var} has exponential dichotomies on $\R_+$ and on $\R_-$ with projector $P_\lambda$ satisfying $\R^d=R(P_\lambda(0))\oplus N(P_\lambda(0))$. This yields $R(P_\lambda(0))\cap N(P_\lambda(0))=\set{0}$ and the dynamical characterization \eqref{R-N} implies that the trivial solution is the unique bounded entire solution to \eqref{var}; thus $\psi=\bar\psi$. This shows that $D_1G(0,\lambda):W^{1,\infty}(\R)\to L^\infty(\R)$ is invertible and Banach's Isomorphism Theorem \cite[pp.~179--180, Prop.~1]{zeidler:95} yields the claim.

	$(a)\Rightarrow(b)$ Repeating the arguments yielding (a) it remains to show that inhomogeneities $g\in L_0^\infty(\R)$ imply $\psi\in W_0^{1,\infty}(\R)$. Thereto, note $\psi=\psi_1+\psi_2$ with
	\begin{align*}
		\psi_1(t)&:=\int_{-\infty}^t\Phi_\lambda(t,s)P_\lambda(s)g(s)\d s,&
		\psi_2(t)&:=-\int_t^{\infty}\Phi_\lambda(t,s)[I_d-P_\lambda(s)]g(s)\d s
	\end{align*}
	and choose $\eps>0$. Then there exists a real $T>0$ such that $\abs{g(s)}<\tfrac{\alpha}{K}\tfrac{\eps}{2}$ holds for a.a.\ $s\in\R\setminus(-T,T)$. On the one hand, provided we choose $T_1\geq T$ sufficiently large that $\tfrac{K}{\alpha}\norm{g}_\infty e^{\alpha(T-t)}<\tfrac{\eps}{2}$ for all $t\geq T_1$, then this leads to the estimates
	\begin{align*}
		\abs{\psi_1(t)}
		&\leq
		\int_{-\infty}^T\abs{\Phi_\lambda(t,s)P_\lambda(s)g(s)\d s}+\int_T^\infty\abs{\Phi_\lambda(t,s)P_\lambda(s)g(s)\d s}\\
		&\stackrel{\eqref{ed1}}{\leq}
		K\int_{-\infty}^Te^{-\alpha(t-s)}\abs{g(s)}\d s+K\int_T^\infty e^{-\alpha(t-s)}\abs{g(s)}\d s\\
		&\leq
		K\int_{-\infty}^Te^{-\alpha(t-s)}\d s\norm{g}_\infty+\alpha\int_T^\infty e^{-\alpha(t-s)}\d s\frac{\eps}{2}\\
		&\leq
		\frac{K}{\alpha}e^{\alpha(T-t)}\norm{g}_\infty+\frac{\eps}{2}
		<
		\eps\fall t\geq T_1,\\
		\abs{\psi_1(t)}
		&\leq
		\int_{-\infty}^t\abs{\Phi_\lambda(t,s)P_\lambda(s)g(s)}\d s
		\stackrel{\eqref{ed1}}{\leq}
		\int_{-\infty}^te^{-\alpha(t-s)}\d s\eps
		=
		\eps\fall t\leq-T
	\end{align*}
	and in conclusion $\lim_{t\to\pm\infty}\abs{\psi_1(t)}=0$. On the other hand, if we furthermore choose $T_1\geq T$ so large that $\frac{K}{\alpha}\norm{g}_\infty e^{\alpha(t+T)}<\frac{\eps}{2}$ for all $t\leq-T_1$, then
	\begin{align*}
		\abs{\psi_2(t)}
		&\leq
		\int_t^{\infty}\abs{\Phi_\lambda(t,s)[I_d-P_\lambda(s)]g(s)}\d s
		\stackrel{\eqref{ed1}}{\leq}
		\alpha\int_t^\infty e^{\alpha(t-s)}\eps
		=
		\eps
		\fall t\geq T,\\
		\abs{\psi_2(t)}
		&\leq
		\int_t^{-T}\abs{\Phi_\lambda(t,s)[I_d-P_\lambda(s)]g(s)}\d s+
		\int_{-T}^\infty\abs{\Phi_\lambda(t,s)[I_d-P_\lambda(s)]}\d s\\
		&\stackrel{\eqref{ed1}}{\leq}
		K\int_{-T}^\infty e^{\alpha(t-s)}\abs{g(s)}\d s+K\int_{-T}^\infty e^{\alpha(t-s)}\abs{g(s)}\d s\\
		&\leq
		\alpha\int_{-T}^\infty e^{\alpha(t-s)}\d s\frac{\eps}{2}+K\int_{-T}^\infty e^{\alpha(t-s)}\d s\norm{g}_\infty\\
		&\leq
		\frac{\eps}{2}+\frac{K}{\alpha}e^{\alpha(t+T)}\norm{g}_\infty
		<
		\eps
		\fall t\leq-T
	\end{align*}
	also guarantee $\lim_{t\to\pm\infty}\abs{\psi_2(t)}=0$. We conclude that $\psi\in L_0^\infty(\R)$ holds. Moreover, from the identity $\dot\psi(t)\equiv A(t,\lambda)\psi(t)+g(t)$ a.e.\ on $\R$, \lref{lemaprop}(a) and $g\in L_0^\infty(\R)$ results $\lim_{t\to\pm\infty}|\dot\psi(t)|=0$ and consequently $\psi\in W_0^{1,\infty}(\R)$.

	$(b)\Rightarrow(c)$ We first note that the transition matrix $\Phi_\lambda:\R\tm\R\to\R^{d\tm d}$ of \eqref{var} is an evolution family on the Banach space $\R^d$ in the language of e.g.\ \cite{sasu:07,sasu:10} (note that the essential boundedness guaranteed by \lref{lemaprop}(a) and \cite[Lemma~2.9]{aulbach:wanner:96} yield that there exists a $\omega\geq 0$ so that $\abs{\Phi_\lambda(t,s)}\leq e^{\omega(t-s)}$ for all $s\leq t$). In particular, the proof of \cite[Thm.~4.8]{sasu:07} establishes that for each $g\in L_0^\infty(\R)$ there exists a unique solution $\phi\in L_0^\infty(\R)$ of the integral equation
	\begin{equation}
		\phi(t)=\Phi_\lambda(t,\tau)\phi(\tau)+\int_\tau^t\Phi_\lambda(t,s)g(s)\d s\fall\tau\leq t.
		\label{voc1}
	\end{equation}
	Indeed, the abstract setting of \cite[Thm.~1.2]{sasu:10} is met, because the function space $L_0^\infty(\R)$ possesses the following properties:
	\begin{itemize}
		\item For each $x\in L_0^\infty(\R)$ and $\tau\in\R$ the shifted function $x_\tau:=x(\tau+\cdot)$ satisfies $x_\tau\in L_0^\infty(\R)$ and $\norm{x}_\infty=\norm{x_\tau}_\infty$,

		\item $L_0^\infty(\R)$ contains the continuous functions $x:\R\to\R^d$ having compact support,

		\item $\int_\tau^t\abs{x(s)}\d s\leq\int_\tau^t\norm{x}_\infty\d s=(t-\tau)\norm{x}_\infty$ for all $\tau\leq t$ and $x\in L_0^\infty(\R)$,

		\item e.g.\ $x_0:\R\to\R$, $x_0(t):=\tfrac{1}{1+\abs{t}}$ is continuous with $x_0\in L_0^\infty(\R)\setminus L^1(\R)$,

		\item if $x,y:\R\to\R$ are measurable with $\abs{x(t)}\leq\abs{y(t)}$ for a.a.\ $t\in\R$ and $y\in L_0^\infty(\R)$, then $x\in L_0^\infty(\R)$.
	\end{itemize}
	It remains to show $\phi\in W_0^{1,\infty}(\R)$. Thereto, by the Variation of Constants \cite[Thm.~2.10]{aulbach:wanner:96} the unique solution $\phi\in L_0^\infty(\R)$ of \eqref{voc1} is also the unique solution to the perturbed variation equation \eqref{linb} (satisfying the initial condition $x(\tau)=\phi(\tau)$) and hence absolutely continuous on each bounded subinterval of $\R$. Moreover, the solution identity for \eqref{linb} implies $\dot\phi\in L_0^\infty(\R)$ due to \lref{lemaprop}(a). In conclusion, $\phi\in W_0^{1,\infty}(\R)$.
\end{proof}

\begin{lemma}[dual variation equation]
	\label{lemad}
	Let $(H_0$--$H_1)$ hold and $\lambda\in\tilde\Lambda$. If $\phi_\lambda:\R\to\Omega$ is hyperbolic on an interval $I$ with projector $P_\lambda$, then the \emph{dual variation equation}
	\begin{equation}
		\tag{$V_\lambda^\ast$}
		\dot x=-D_2f(t,\phi_\lambda(t),\lambda)^Tx
		\label{varad}
	\end{equation}
	has an exponential dichotomy on $I$ with projector
	\begin{equation}
		Q_\lambda(t):=I_d-P_\lambda(t)^T\fall t\in I.
		\label{proad}
	\end{equation}
In particular, one has $N(P_{\lambda}(t))^{\bot}=N(Q_{\lambda}(t))$ for all $t\in I$.
\end{lemma}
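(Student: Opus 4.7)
The plan is to reduce everything to direct manipulations of the transition matrix. First I would identify the transition matrix of \eqref{varad}. Writing $\Psi_\lambda(t,s):=\Phi_\lambda(s,t)^T$ and using the standard backward differentiation rule $\partial_t\Phi_\lambda(s,t)=-\Phi_\lambda(s,t)A(t,\lambda)$, a transposition yields $\partial_t\Psi_\lambda(t,s)=-A(t,\lambda)^T\Psi_\lambda(t,s)$ together with $\Psi_\lambda(s,s)=I_d$, so $\Psi_\lambda$ is precisely the transition matrix of \eqref{varad}.

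Next I would check that $Q_\lambda$ is an invariant projector of $\Psi_\lambda$ on $I$. Swapping the arguments in the invariance relation \eqref{ed0} gives $\Phi_\lambda(s,t)P_\lambda(t)=P_\lambda(s)\Phi_\lambda(s,t)$, and transposing this identity produces
\begin{equation*}
	\Psi_\lambda(t,s)P_\lambda(s)^T=P_\lambda(t)^T\Psi_\lambda(t,s)\fall s,t\in I,
\end{equation*}
which, by subtracting from $\Psi_\lambda(t,s)$, is equivalent to $\Psi_\lambda(t,s)Q_\lambda(s)=Q_\lambda(t)\Psi_\lambda(t,s)$. Obviously, $Q_\lambda(t)$ is a projection on $\R^d$ since $P_\lambda(t)^T$ is.

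The dichotomy estimates then follow by the same swap-and-transpose trick, using that the Euclidean operator norm is invariant under transposition. For $s\leq t$, the first estimate in \eqref{ed1} combined with the invariance of $P_\lambda$ gives
\begin{equation*}
	\abs{\Psi_\lambda(s,t)[I_d-Q_\lambda(t)]}
	=\abs{\Phi_\lambda(t,s)^TP_\lambda(t)^T}
	=\abs{P_\lambda(t)\Phi_\lambda(t,s)}
	=\abs{\Phi_\lambda(t,s)P_\lambda(s)}
	\leq Ke^{-\alpha(t-s)},
\end{equation*}
and analogously the second estimate in \eqref{ed1} together with $(I_d-P_\lambda(s))\Phi_\lambda(s,t)=\Phi_\lambda(s,t)(I_d-P_\lambda(t))$ (again a swap in \eqref{ed0}) yields
\begin{equation*}
	\abs{\Psi_\lambda(t,s)Q_\lambda(s)}
	=\abs{(I_d-P_\lambda(s))\Phi_\lambda(s,t)}
	=\abs{\Phi_\lambda(s,t)[I_d-P_\lambda(t)]}
	\leq Ke^{-\alpha(t-s)}.
\end{equation*}
Thus \eqref{varad} admits an exponential dichotomy on $I$ with projector $Q_\lambda$, the same constants $K$ and $\alpha$.

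Finally, the kernel identity is a piece of linear algebra: since $P_\lambda(t)^T$ is a projection, $N(Q_\lambda(t))=N(I_d-P_\lambda(t)^T)=R(P_\lambda(t)^T)$, and the classical relation $R(M^T)=N(M)^\perp$ applied to $M=P_\lambda(t)$ completes the proof. I do not anticipate a serious obstacle here; the only point requiring a bit of care is getting the direction of the argument-swap correct when translating the dichotomy estimates from $\Phi_\lambda$ to $\Psi_\lambda$.
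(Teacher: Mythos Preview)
Your proposal is correct and follows essentially the same route as the paper: identify the dual transition matrix as $\Phi_\lambda(s,t)^T$, transpose the invariance relation \eqref{ed0} to get invariance of $Q_\lambda$, and use $\abs{C}=\abs{C^T}$ to transfer the dichotomy estimates \eqref{ed1}. Your treatment is slightly more explicit (you actually differentiate to verify the transition matrix and spell out $N(I_d-P_\lambda(t)^T)=R(P_\lambda(t)^T)=N(P_\lambda(t))^\perp$, whereas the paper just asserts the former and cites \cite[p.~294, Prop.~6(ii)]{zeidler:95} for the latter), but the argument is the same.
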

\begin{proof}
	Fix $\lambda\in\tilde\Lambda$. Above all, one can easily show that the dual variation equation \eqref{varad} has the transition matrix $\Phi_\lambda^\ast(t,s):=\phi_\lambda(s,t)^T$ for all $t,s\in I$. Consequently,
\begin{align*}
		\Phi_\lambda^\ast(t,s)Q_\lambda(s)
		&\stackrel{\eqref{proad}}{=}
		\phi_\lambda(s,t)^T\intcc{I_d-P_{\lambda}(s)^T}
		\stackrel{\eqref{ed0}}{=}
		(\intcc{I_d-P_{\lambda}(s)}\phi_\lambda(s,t))^T\\
		&=\left(\phi_\lambda(s,t)[I_d-P_{\lambda}(t)]\right)^T
		=
		\intcc{I_d-P_{\lambda}(t)^T}\phi_\lambda(s,t)^T
		\stackrel{\eqref{proad}}{=}
		Q_{\lambda}(t)\Phi_\lambda^\ast(t,s),
	\end{align*}
	as well as the required dichotomy estimates (note $\abs{C}=\abs{C^T}$ for $C\in\R^{d\tm d}$)
	\begin{align*}
		\left|\Phi_\lambda^\ast(t,s)Q_\lambda(s)\right|
		\stackrel{\eqref{proad}}{=}
		\left|\left(\phi_\lambda(s,t)\intcc{I_d-P_{\lambda}(t)}\right)^T\right|
		=
		\left|\phi_\lambda(s,t)\intcc{I_d-P_{\lambda}(t)}\right|
		\stackrel{\eqref{ed1}}{\leq}
		Ke^{-\alpha(t-s)},\\
		\left|\Phi_\lambda^\ast(s,t)\intcc{I_d-Q_\lambda(t)}\right|
		\stackrel{\eqref{proad}}{=}
		\left|\left(\phi_\lambda(t,s)P_{\lambda}(s)\right)^T\right|
		=
		\left|\phi_\lambda(t,s)P_{\lambda}(s)\right|
		\stackrel{\eqref{ed1}}{\leq}
		Ke^{-\alpha(t-s)}
	\end{align*}
	for all $s\leq t$, $s,t\in I$, result, which prove that \eqref{varad} has an exponential dichotomy on $I$ with projector $Q_\lambda$. The last statement follows from \cite[p.~294, Prop.~6(ii)]{zeidler:95}.
\end{proof}
The following result was already partly stated in \cite[Prop.~3.1]{poetzsche:12}:
\begin{theorem}[Fredholmness]\label{thmfred}
	If $(H_0$--$H_1)$ hold, then the following are equivalent for all parameters $\lambda\in\tilde\Lambda$:
	\begin{enumerate}
		\item[(a)] $D_1G(0,\lambda)\in L(W^{1,\infty}(\R),L^\infty(\R))$ is Fredholm,

		\item[(b)] $D_1G(0,\lambda)\in L(W_0^{1,\infty}(\R),L_0^\infty(\R))$ is Fredholm,

		\item[(c)] the bounded entire solution $\phi_\lambda:\R\to\Omega$ to \eqref{cde} is hyperbolic on $\R_+$ with Morse index $m_\lambda^+$ and on $\R_-$ with Morse index $m_\lambda^-$,
	\end{enumerate}
	where $\ind D_1G(0,\lambda)=m_\lambda^--m_\lambda^+$.
\end{theorem}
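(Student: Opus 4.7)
My plan is to prove the circular implications $(c)\Rightarrow(a),(b)$ together with the index formula, and then to treat the converses $(a)\Rightarrow(c)$ and $(b)\Rightarrow(c)$ separately; the index drops out of an explicit dimension count in the forward direction.

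For $(c)\Rightarrow(a),(b)$, I would assume that $(V_\lambda)$ has exponential dichotomies on $\R_\pm$ with invariant projectors $P_\lambda^\pm$. Combining \tref{thmgprop} with the dynamical characterization \eqref{R-N} identifies the kernel of $D_1G(0,\lambda)$ with the space of bounded entire solutions of $(V_\lambda)$, which via the transition matrix is in bijection with $R(P_\lambda^+(0))\cap N(P_\lambda^-(0))$. Because the dichotomy estimates \eqref{ed1} automatically force these solutions to decay at $\pm\infty$, the same finite-dimensional characterization works in both the $W^{1,\infty}$ and the $W_0^{1,\infty}$ setup. For the range, given $g\in L^\infty(\R)$ I will construct candidate half-line solutions $\psi_\pm$ by integrating against the Green's functions \eqref{nogreen} associated to $P_\lambda^\pm$, and observe that $g$ lies in $R(D_1G(0,\lambda))$ precisely when the jump $\psi_-(0)-\psi_+(0)$ can be absorbed by an element of $R(P_\lambda^+(0))+N(P_\lambda^-(0))$. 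This exhibits the range as closed of codimension $d-\dim(R(P_\lambda^+(0))+N(P_\lambda^-(0)))$; the $L_0^\infty$ version is handled by the same $\eps$-$T$ estimate used in the proof of \tref{thmadmin}.

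To obtain the index formula I would use \lref{lemad} together with the identities $R(Q_\lambda)=R(P_\lambda)^\perp$ and $N(Q_\lambda)=N(P_\lambda)^\perp$ to reinterpret the codimension as $\dim(R(Q_\lambda^+(0))\cap N(Q_\lambda^-(0)))$, i.e.\ as the dimension of the bounded solution space of the dual equation $(V_\lambda^\ast)$. The linear algebra identity $\dim(A\cap B)+\dim(A+B)=\dim A+\dim B$, applied to $A:=R(P_\lambda^+(0))$ and $B:=N(P_\lambda^-(0))$, together with $\dim A=d-m_\lambda^+$ and $\dim B=m_\lambda^-$, then yields
\begin{equation*}
\ind D_1G(0,\lambda)
=\dim(A\cap B)-\bigl(d-\dim(A+B)\bigr)
=\dim A+\dim B-d
=m_\lambda^--m_\lambda^+.
\end{equation*}

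The hard part will be the converses $(a)\Rightarrow(c)$ and $(b)\Rightarrow(c)$: my plan is to extract exponential dichotomies on each semiaxis via a Palmer-type admissibility argument. For $g\in L^\infty(\R_+)$ (resp.\ $L_0^\infty(\R_+)$), extend by zero to $\R_-$; Fredholmness of $D_1G(0,\lambda)$ supplies a bounded solution of $(V_{\lambda,g})$ modulo a finite-dimensional obstruction, and after projecting out this obstruction one obtains the admissibility of the pair $(W^{1,\infty}(\R_+),L^\infty(\R_+))$ for the variation equation on $\R_+$ (and symmetrically on $\R_-$), from which a standard Perron-type theorem produces the sought dichotomy. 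The genuinely difficult step here is upgrading mere boundedness of the solution to the uniform estimates \eqref{ed1}; this will require combining the Fredholm alternative with a closed graph and bounded growth argument of the type developed in \cite{palmer:84,palmer:88,sasu:07,sasu:10}, and for part (b) the admissibility version for the pair $(W_0^{1,\infty}(\R_+),L_0^\infty(\R_+))$ already deployed in the proof of \tref{thmadmin}$(b)\Rightarrow(c)$. A further subtlety absent in \tref{thmadmin} is that $R(P_\lambda^+(0))$ and $N(P_\lambda^-(0))$ need no longer sum directly to $\R^d$---this is precisely what makes the index nontrivial---so two distinct projectors must be carried throughout.
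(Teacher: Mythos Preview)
Your proposal is correct and follows the same overall Palmer-type strategy as the paper, including the same kernel identification via \eqref{R-N}, the same index computation via the dimension formula for $A=R(P_\lambda^+(0))$ and $B=N(P_\lambda^-(0))$, and the same appeal to \cite{palmer:88} for the hard converse directions.

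The one genuine difference lies in how the range is characterized. The paper works via duality: it shows (steps (II)--(IV)) that $g\in R(D_1G(0,\lambda))$ if and only if $\int_\R\sprod{\psi(s),g(s)}\d s=0$ for every bounded entire solution $\psi$ of the dual equation \eqref{varad}, and then identifies the annihilator with $R(P_\lambda^+(0))^\perp\cap N(P_\lambda^-(0))^\perp$ via \lref{lemad}. You instead use the node-map approach: build bounded half-line solutions with the Green's functions \eqref{nogreen} and reduce solvability to the finite-dimensional matching condition $\psi_-(0)-\psi_+(0)\in R(P_\lambda^+(0))+N(P_\lambda^-(0))$. Both routes are classical and equivalent; yours is arguably more direct (closedness of the range is immediate because it is the kernel of a bounded map into a finite-dimensional quotient), while the paper's duality argument has the advantage that the orthogonality relation \eqref{pal35} is reused verbatim later in the proof of \tref{thmmain} (step (II.2)). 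One small remark: your invocation of \lref{lemad} for the codimension interpretation is unnecessary for the index formula itself, since the dimension identity $\dim(A\cap B)-(d-\dim(A+B))=\dim A+\dim B-d$ already does the job without passing through the dual.
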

Although the proof literally follows the lines of \cite[Lemma~4.2]{palmer:84}, for further reference in the subsequent text we provide some necessary details.
\begin{proof}
	Throughout, let $\lambda\in\tilde\Lambda$ be fixed.

	$(c)\Rightarrow(a)$ Our assumptions imply that \eqref{var} has exponential dichotomies on $\R_\pm$ with projectors $P_\lambda^\pm$; the corresponding growth rates may be denoted by $\alpha>0$. We establish that $D_1G(0,\lambda):W^{1,\infty}(\R)\to L^\infty(\R)$ is Fredholm.

	(I) \underline{Claim}: \emph{$\dim N(D_1G(0,\lambda))<\infty$.}\\
	Thanks to the dynamical characterization \eqref{R-N}, if we abbreviate
	\begin{align*}
		X_+&:=R(P_\lambda^+(0)),&
		X_-&:=N(P_\lambda^-(0)),
	\end{align*}
	then the intersection $X_+\cap X_-$ is precisely the subspace of all initial values $\xi\in\R^d$ for bounded entire solutions $\Phi_\lambda(\cdot,0)\xi$ to \eqref{var}. By means of the isomorphism $\xi\mapsto\Phi_\lambda(\cdot,0)\xi$ from $\R^d$ onto the solution space of \eqref{var} one has $\dim(X_+\cap X_-)=\dim N(D_1G(\lambda,0))$.

	(II) \underline{Claim}: \emph{If $g\in R(D_1G(0,\lambda))$, then for all solutions $\psi\in L^\infty(\R)$ of the dual variation equation \eqref{varad} one has
	\begin{equation}
		\int_{\R}\sprod{\psi(s),g(s)}\d s=0.
		\label{pal35}
	\end{equation}}
	First of all, \lref{lemad} implies that the dual variation equation \eqref{varad} has exponential dichotomies on $\R_\pm$ with projectors $Q_\lambda^\pm(t)=I_d-P_\lambda^\pm(t)^T$ and growth rate $\alpha>0$. Therefore the following orthogonal complements allow the dynamical characterization
	\begin{align}\label{R-Nad}
		\begin{split}
			X_+^\perp&=R(Q_\lambda^+(0))
			=
			\set{\eta\in\R^d:\,\sup_{0\leq t}e^{-\gamma t}|\Phi_\lambda^\ast(t,0)\eta|<\infty}
			\text{ for }\gamma\in[-\alpha,\alpha),\\
			X_-^\perp&=N(Q_\lambda^-(0))
			=
			\set{\eta\in\R^d:\,\sup_{t\leq 0}e^{-\gamma t}|\Phi_\lambda^\ast(t,0)\eta|<\infty}
			\text{ for }\gamma\in(-\alpha,\alpha].
		\end{split}
	\end{align}
	Hence, the intersection $X_+^\perp\cap X_-^\perp$ consists of initial values $\eta$ for bounded entire solutions $\Phi_\lambda^\ast(\cdot,0)\eta$ to the dual variation equation \eqref{varad}.
	For each $g\in R(D_1G(0,\lambda))$ there exists a preimage $\phi\in W^{1,\infty}(\R)$ such that the identity $g(t)\equiv\dot\phi(t)-D_2f(t,\phi_\lambda(t),\lambda)\phi(t)$ holds a.e.\ on $\R$. If $\psi\in L^\infty(\R)$ denotes a solution of \eqref{varad}, then the product rule implies
	\begin{align*}
		\int_{-T}^T\sprod{\psi(s),g(s)}\d s
		&=
		\int_{-T}^T\sprod{\psi(s),\dot\phi(s)-D_2f(s,\phi_\lambda(s),\lambda)\phi(s)}\d s\\
		&=
		\int_{-T}^T\sprod{\psi(s),\dot\phi(s)}+\sprod{\dot\psi(s)\phi(s)}\d s
		=
		\int_{-T}^T\frac{\d}{\d s}\sprod{\psi(s),\phi(s)}\d s\\
		&=
		\sprod{\psi(T),\phi(T)}-\sprod{\psi(-T),\phi(-T)}
		\fall T>0.
	\end{align*}
	Since the dynamical characterization \eqref{R-Nad} guarantees that $\psi(t)$ decays to $0$ as $t\to\pm\infty$ exponentially and $\phi\in L^\infty(\R)$ holds, one obtains \eqref{pal35} from
	\begin{equation*}
		\int_{\R}\sprod{\psi(s),g(s)}\d s=\lim_{T\to\infty}\intoo{\sprod{\psi(T),\phi(T)}-\sprod{\psi(-T),\phi(-T)}}=0.
	\end{equation*}

	(III) \underline{Claim}: \emph{If \eqref{pal35} holds for all solutions $\psi\in L^\infty(\R)$ of the dual variation equation \eqref{varad}, then $g\in R(D_1G(0,\lambda))$.}\\
	Let $g\in L^\infty(\R)$. For $\eta\in\R^d$ satisfying
	\begin{equation}
		\eta^T\intcc{P_\lambda^+-(I_d-P_\lambda^-)}=0
		\label{pal36}
	\end{equation}
	we define
	\begin{align*}
		\psi:\R&\to\R^{d\tm d},&
		\psi(t)&:=
		\begin{cases}
			\Phi_\lambda^\ast(t,0)Q_\lambda^+(t)\eta,&t\geq 0,\\
			\Phi_\lambda^\ast(t,0)Q_\lambda^-(t)\eta,&t\leq 0.
		\end{cases}
	\end{align*}
	Then $\psi$ is a bounded entire solution of the dual variation equation \eqref{varad} and
	\begin{equation*}
		\sprod{\eta,\int_{-\infty}^0P_\lambda^-(0)\Phi_\lambda(0,s)g(s)\d s+\int_0^\infty(I_d-P_\lambda^+(0))\Phi_\lambda(0,s)g(s)\d s}=0
	\end{equation*}
	for all $\eta\in\R^d$ such that \eqref{pal36} holds. This, in turn, is equivalent to the fact that the linear algebraic equation
	\begin{multline*}
		\intcc{P_\lambda^+(0)-(I_d-P_\lambda^-(0))}\xi\\
		=
		\int_{-\infty}^0P_\lambda^-(0)\Phi_\lambda(0,s)g(s)\d s+\int_0^\infty(I_d-P_\lambda^+(0))\Phi_\lambda(0,s)g(s)\d s
	\end{multline*}
	for $\xi\in\R^d$ has a solution. Consequently, with Green's function \eqref{nogreen},
	\begin{align*}
		\phi:\R&\to\R^d,&
		\phi(t)
		&:=
		\begin{cases}
			\Phi_\lambda(t,0)P_\lambda^+(0)\xi+\int_0^\infty\Gamma_{P_\lambda^+}(t,s)g(s)\d s,&t\geq 0,\\
			\Phi_\lambda(t,0)[I_d-P_\lambda^-(0)]\xi+\int_{-\infty}^0\Gamma_{P_\lambda^-}(t,s)g(s)\d s,&t\leq 0
		\end{cases}
	\end{align*}
	defines a solution of the perturbed variation equation \eqref{linb} in $W^{1,\infty}(\R)$. Due to \eqref{thmgprop1} this means $D_1G(0,\lambda)\phi=g$, i.e.\ $g\in R(D_1G(0,\lambda))$.

	(IV) \underline{Claim}: \emph{$\codim R(D_1G(0,\lambda))<\infty$.}\\
	For each bounded entire solution $\psi$ of \eqref{varad} we define a functional
	\begin{align*}
		x_\psi'&\in W^{1,\infty}(\R)',&
		x_\psi'(x)&:=\int_{\R}\sprod{\psi(s),x(s)}\d s.
	\end{align*}
	This establishes an isomorphism between $X_+^\perp\cap X_-^\perp$ and a finite-dimensional subspace of the dual space $L^\infty(\R)'$. In other words, $R(D_1G(0,\lambda))$ is the subspace of $L^\infty(\R)$ annihilated by this finite-dimensional subspace of $L^\infty(\R)'$. Consequently, $R(D_1G(0,\lambda))$ is closed, $\codim R(D_1G(0,\lambda))=\dim(X_+^\perp\cap X_-^\perp)<\infty$ and $D_1G(0,\lambda)$ is Fredholm.

	(V) It remains to determine the index of $D_1G(0,\lambda)$ as
	\begin{align*}
		&
		\ind D_1G(0,\lambda)\\
		&=
		\dim\bigl(R(P_\lambda^+(0))\cap N(P_\lambda^-(0))\bigr)-\dim\bigl(N(P_\lambda^-(0))+R(P_\lambda^+(0))\bigr)^{\perp}\\
		&=
		\dim\intoo{R(P_\lambda^+(0))\cap N(P_\lambda^-(0))}-\left(d-\dim\bigl(N(P_\lambda^-(0))+R(P_\lambda^+(0))\bigr)\right)\\
		&=
		\dim\intoo{R(P_\lambda^+(0))\cap N(P_\lambda^-(0))}\\
		&\quad\quad\quad-\left(d-[\dim N(P_\lambda^-(0))+\dim R(P_\lambda^+(0))-\dim R(P_\lambda^+(0))\cap N(P_\lambda^-(0)) ]\right)\\
		&=\dim R(P_\lambda^+(0))-\bigl(d-\dim N(P_\lambda^-(0))\bigr)
		=
		m_\lambda^--m_\lambda^+.
	\end{align*}

	$(a)\Rightarrow(b)$ In order to show that $D_1G(0,\lambda)\in L(W_0^{1,\infty}(\R),L_0^\infty(R))$ is Fredholm, we mimic the arguments in (a). This additionally only requires to establish the inclusions $\phi,\psi\in W_0^{1,\infty}(\R)$, provided that $g\in L_0^\infty(\R)$ holds, but the corresponding estimates result as in the proof of \tref{thmadmin}(b).

	$(b)\Rightarrow(c)$ Referring to \lref{lemaprop}(a), the coefficient matrices $A(\cdot,\lambda):\R\to\R^{d\tm d}$ are essentially bounded. Then the proof of \cite[Thm.]{palmer:88} given for continuous $A(\cdot,\lambda)$ and the spaces $(BC^1(\R),BC(\R))$ literally carries over to our situation.
\end{proof}
\section{Evans function and parity}
\label{sec3}
This section contains our main result. It relates two seemingly independent concepts, namely the Evans function of a parametrized family of variation equations \eqref{var} to the parity of an abstract path of index $0$ Fredholm operators (cf.~App.~\ref{appA}). As basis for the Fredholm properties we impose
\begin{hypothesis*}[$\mathbf{H_2}$]
	There exists a \emph{critical parameter} $\lambda^\ast\in\tilde\Lambda$ such that the entire solution $\phi^\ast:=\phi_{\lambda^\ast}$ of $(C_{\lambda^\ast})$ is hyperbolic on both semiaxes $\R_+$ with projector $P_{\lambda^\ast}^+:\R_+\to\R^{d\tm d}$ (Morse index $m^+$) and on $\R_-$ with projector $P_{\lambda^\ast}^-:\R_-\to\R^{d\tm d}$ (Morse index $m^-$).
\end{hypothesis*}
This local assumption extends to a neighborhood of $\lambda^\ast$ as follows:
\begin{lemma}[roughness]\label{lemrough}
	Let $(H_0$--$H_2)$ hold, where the exponential dichotomies on the semiaxes have the growth rate $\alpha^\ast>0$. If $\alpha\in(0,\alpha^\ast)$, then there exists a $\rho_0>0$ such that for each $\lambda\in\bar B_{\rho_0}(\lambda^\ast)$ the solution $\phi_\lambda$ is hyperbolic on both $\R_+$ with a projector $P_\lambda^+$ and on $\R_-$ with a projector $P_\lambda^-$ and common growth rate $\alpha$. Moreover, the projection mappings $(t,\lambda)\mapsto P_\lambda^+(t)$ and $(t,\lambda)\mapsto P_\lambda^-(t)$ are continuous with
	\begin{align*}
		\dim R(P_\lambda^+(0))&\equiv d-m^+,&
		\dim N(P_\lambda^-(0))&\equiv m^-\on B_{\rho_0}(\lambda^\ast).
	\end{align*}
\end{lemma}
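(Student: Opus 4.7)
The plan is to combine Hypotheses $(H_0)$ and $(H_1)$ to show that the coefficient matrices $A(\cdot,\lambda)$ converge to $A(\cdot,\lambda^\ast)$ uniformly in $t$ as $\lambda\to\lambda^\ast$, and then to apply the classical roughness theorem for exponential dichotomies separately on $\R_+$ and on $\R_-$. Joint continuity of the projectors in $(t,\lambda)$ will be transferred from $t=0$ to all of $\R_\pm$ via the invariance identity.

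My first step is to establish
\[
\lim_{\lambda\to\lambda^\ast}\esssup_{t\in\R}\abs{A(t,\lambda)-A(t,\lambda^\ast)}=0.
\]
For every $\eps>0$, the uniform continuity statement in $(H_0)$ produces a $\delta>0$ such that $\sup_{t\in\R}\abs{\phi_\lambda(t)-\phi_{\lambda^\ast}(t)}<\delta$ together with $d(\lambda,\lambda^\ast)<\delta$ force the $\esssup$ above below $\eps$; by $(H_1)$ this is ensured once $d(\lambda,\lambda^\ast)$ is sufficiently small. Step two is then to invoke the roughness theorem of Coppel \cite{coppel:78} on each semiaxis. For the prescribed $\alpha\in(0,\alpha^\ast)$ this yields an $\eps_0>0$ such that any variation equation whose coefficient is $\eps_0$-close to $A(\cdot,\lambda^\ast)$ in $\esssup$-norm admits an exponential dichotomy on the corresponding semiaxis with growth rate $\alpha$ and a projector that is $O(\eps_0)$-close to $P_{\lambda^\ast}^\pm$ uniformly in $t\in\R_\pm$. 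Choosing $\rho_0>0$ matched to $\eps_0$ via step one delivers both dichotomies on $\bar B_{\rho_0}(\lambda^\ast)$ with the common growth rate $\alpha$.

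Step three handles the dimension and continuity claims. Since $\lambda\mapsto P_\lambda^\pm(0)\in\R^{d\tm d}$ is norm continuous by the quantitative estimate above and projections close in norm have equal rank, possibly after shrinking $\rho_0$ one obtains $\dim R(P_\lambda^+(0))\equiv d-m^+$ and $\dim N(P_\lambda^-(0))\equiv m^-$ on $B_{\rho_0}(\lambda^\ast)$. Joint continuity of $(t,\lambda)\mapsto P_\lambda^\pm(t)$ on $\R_\pm\tm B_{\rho_0}(\lambda^\ast)$ then follows from the invariance relation $P_\lambda^\pm(t)=\Phi_\lambda(t,0)P_\lambda^\pm(0)\Phi_\lambda(0,t)$ together with continuous dependence of the Carath\'eodory transition matrix on initial data and parameters, which is standard (cf.\ \cite{aulbach:wanner:96}) and ultimately rests on $(H_0)$ via Gronwall.

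I expect the main obstacle to lie in step two: the roughness theorem is classically stated for continuous $A$ and a continuous perturbation, whereas here both are only essentially bounded. One therefore has to verify that the quantitative version -- including the norm estimate for the perturbed projector and the fact that the growth rate can be taken arbitrarily close to $\alpha^\ast$ -- survives in the Carath\'eodory framework. Inspection of Coppel's argument (variation of constants, a contraction producing the new stable/unstable fibre at $t=0$, exponential bounds) shows that every step needs only essential boundedness and local integrability of the coefficient difference, so the extension is routine; nonetheless it must be explicitly justified before the remainder of the proof clicks into place.
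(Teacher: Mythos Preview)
Your argument is correct but proceeds along a different line than the paper. You go via Coppel's roughness theorem: first prove $\esssup_{t\in\R}\abs{A(t,\lambda)-A(t,\lambda^\ast)}\to 0$ from $(H_0)$--$(H_1)$, then feed this into the quantitative perturbation estimate on each semiaxis to obtain both the dichotomies with rate $\alpha$ and the norm-closeness of the projectors, from which continuity and rank stability follow. You rightly flag the only delicate point, namely that Coppel's proof is written for continuous coefficients, and you are correct that each ingredient (variation of constants, the contraction producing the perturbed stable fibre, the exponential bounds) uses only local integrability and essential boundedness, so the extension to the Carath\'eodory setting is routine.

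The paper instead characterizes the projectors $P_\lambda^\pm$ as fixed points of parameter-dependent Lyapunov--Perron operators (citing \cite{sandstede:93}) and obtains their continuous dependence on $\lambda$ from the Uniform Contraction Principle; the existence of the dichotomies for $\lambda$ near $\lambda^\ast$ is alternatively noted (in the paragraph preceding the proof) to follow from openness of the Fredholm set together with \tref{thmfred}. Your route is more elementary and yields explicit estimates for $\abs{P_\lambda^\pm(t)-P_{\lambda^\ast}^\pm(t)}$, which is pleasant; the paper's Lyapunov--Perron approach has the advantage of being formulated from the outset in an integral-equation framework that is native to the Carath\'eodory setting, so no separate justification of the passage from continuous to measurable coefficients is needed.
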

The characterization in \tref{thmfred} provides an easy proof that the hyperbolicity of $\phi^\ast$ extends to the bounded entire solutions $\phi_\lambda$ for parameters $\lambda$ near $\lambda^\ast$. Thereto, Hypothesis~$(H_2)$ implies that $D_1G(0,\lambda^\ast)$ is Fredholm. Since the Fredholm operators form an open subset of $L(W^{1,\infty}(\R),L^\infty(\R))$ (cf.~\cite[p.~300, Prop.~1]{zeidler:95}) and $\lambda\mapsto D_1G(0,\lambda)$ is continuous by \tref{thmgprop}, also $D_1G(0,\lambda)$ is Fredholm (preserving the index). This, in turn, ensures that $\phi_\lambda$ is hyperbolic on both semiaxes for $\lambda$ in a neighborhood of $\lambda^\ast$.
\begin{proof}
	As in \cite[S.~8, Lemma~1.1]{sandstede:93} the projectors $P_\lambda^\pm$ are characterized via fixed points of Lyapunov--Perron operators. Then their continuous dependence on the parameter $\lambda$ is a consequence of the Uniform Contraction Principle.
\end{proof}

Under Hypothesis~$(H_2)$ we subsequently fix a growth rate $\alpha\in(0,\alpha^\ast)$ and based on $\rho_0>0$ from \lref{lemrough} consider the parameter space
\begin{equation*}
	\Lambda:=\set{\lambda\in\tilde\Lambda:\,d(\lambda,\lambda^\ast)\leq\rho_0}.
\end{equation*}
\begin{lemma}\label{bundle2}
	Under assumptions $(H_0$--$H_2)$ the sets
	\begin{align*}
		\mathrm{R}[P^{+}(t)]&:=\set{(\lambda,\xi)\in\Lambda\tm\R^d\mid \xi\in R(P_\lambda^+(t))}\fall t\in\R_+,\\
		\mathrm{N}[P^{-}(t)]&:=\set{(\lambda,\xi)\in\Lambda\tm\R^d\mid \xi\in N(P_\lambda^-(t))}\fall t\in\R_-
	\end{align*}
	are vector bundles of dimension $d-m^+$ resp.\ $m^-$ over $\Lambda$.
\end{lemma}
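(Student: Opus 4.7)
The plan is to verify that $\mathrm{R}[P^+(t)]$ (and analogously $\mathrm{N}[P^-(t)]$) is locally trivial over $\Lambda$ with fibre $\R^{d-m^+}$ (resp.\ $\R^{m^-}$), using the standard construction that the range of a continuous family of projections of constant rank defines a vector bundle. The two pieces of information needed have just been supplied: by \lref{lemrough} the map $\lambda\mapsto P_\lambda^+(t)$ is continuous into $\R^{d\tm d}$ for every fixed $t\in\R_+$, and the dimension of $R(P_\lambda^+(t))$ equals $d-m^+$ independently of $\lambda\in\Lambda$ (the constancy in $t$ follows because $P_\lambda^+(t)$ is similar to $P_\lambda^+(0)$ via the invertible transition matrix $\Phi_\lambda(t,0)$ together with invariance \eqref{ed0}).

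Fix $t\in\R_+$ and $\lambda_0\in\Lambda$, set $k:=d-m^+$, and pick a basis $v_1,\ldots,v_k$ of $R(P_{\lambda_0}^+(t))$. Define continuous sections
\begin{equation*}
	e_i:\Lambda\to\R^d,\qquad e_i(\lambda):=P_\lambda^+(t)v_i,\qquad 1\leq i\leq k,
\end{equation*}
which satisfy $e_i(\lambda_0)=v_i$. Since linear independence of a $k$-tuple is an open condition and $\lambda\mapsto e_i(\lambda)$ is continuous, there is a neighbourhood $U\subseteq\Lambda$ of $\lambda_0$ on which $e_1(\lambda),\ldots,e_k(\lambda)$ remain linearly independent; as they all lie in $R(P_\lambda^+(t))$, a space of dimension $k$, they form a basis of each fibre over $U$. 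The map
\begin{equation*}
	h_U:U\tm\R^k\to\mathrm{R}[P^+(t)]\big|_U,\qquad h_U(\lambda,c):=\Bigl(\lambda,\sum_{i=1}^kc_ie_i(\lambda)\Bigr)
\end{equation*}
is then a continuous bijection that is fibrewise linear, and its inverse $(\lambda,\xi)\mapsto(\lambda,(c_i(\lambda,\xi))_{i=1}^k)$ is continuous because the coefficients $c_i(\lambda,\xi)$ are obtained by Cramer's rule from the continuously varying basis $e_1(\lambda),\ldots,e_k(\lambda)$. Thus $h_U$ is a local trivialization, which shows that $\mathrm{R}[P^+(t)]$ is a vector bundle of rank $d-m^+$ over $\Lambda$.

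For $\mathrm{N}[P^-(t)]$, note that $N(P_\lambda^-(t))=R(I_d-P_\lambda^-(t))$, and $\lambda\mapsto I_d-P_\lambda^-(t)$ is a continuous family of projections of constant rank $m^-$, again by \lref{lemrough}. Applying the preceding construction verbatim to this family delivers the vector bundle structure of rank $m^-$. The only technical step is the continuity of the inverse trivialization, which reduces to the standard fact that linearly independent continuous families admit continuously varying dual frames; no analytic estimate on the dichotomy data enters beyond what \lref{lemrough} has already provided.
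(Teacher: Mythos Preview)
Your argument is correct and is essentially the same as the paper's: both rely on the continuity of $\lambda\mapsto P_\lambda^\pm(t)$ from \lref{lemrough} together with the standard fact that the range (resp.\ kernel) of a continuous family of projections of constant rank forms a vector bundle. The paper simply invokes this fact as \cite[Prop.~6.21]{FiPejsachowiczIII}, whereas you spell out the local trivialization explicitly via projected frames and Cramer's rule; your handling of $\mathrm{N}[P^-(t)]$ as $R(I_d-P_\lambda^-(t))$ is the natural reduction to the range case.
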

\begin{proof}
	Since $\lambda\mapsto P_\lambda^{\pm}(t)$ is continuous for any $t\in\R_\pm$ by \lref{lemrough}, the assertion follows directly from \cite[Prop.~6.21]{FiPejsachowiczIII}.
\end{proof}
\begin{proposition}[Evans function]\label{propetagamma}
	Let $(H_0$--$H_2)$ hold with Morse indices $m^+=m^-$. If $\Lambda$ is contractible, then there exist continuous functions $\xi^+_1,\ldots,\xi_{d-m^+}^+:\Lambda\to\R^d$ and $\xi^-_1,\ldots,\xi_{m^-}^-:\Lambda\to\R^d$ such that
	\begin{enumerate}
		\item[$(a)$] $\xi^+_1(\lambda),\ldots,\xi_{d-m^+}^+(\lambda)$ is a base of $R(P_\lambda^+(0))\subseteq\R^d$,

		\item[$(b)$] $\xi^-_1(\lambda),\ldots,\xi_{m^-}^-(\lambda)$ is a base of $N(P_\lambda^-(0))\subseteq\R^d$,

		\item[$(c)$] $\xi^+_1(\lambda),\ldots,\xi_{d-m^+}^+(\lambda),\xi^-_1(\lambda),\ldots,\xi_{m^-}^-(\lambda)$ is a basis of $\R^d$ if and only if one has the direct sum $R(P_\lambda^+(0))\oplus N(P_\lambda^-(0))=\R^d$
	\end{enumerate}
	holds for all $\lambda\in\Lambda$. Given this, an \emph{Evans function} for \eqref{var} is defined by
	\begin{align*}
		E:\Lambda&\to\R,&
		E(\lambda)&:=\det\bigl(\xi^+_1(\lambda),\ldots,\xi_{d-m^+}^+(\lambda),\xi^-_1(\lambda),\ldots,\xi_{m^-}^-(\lambda)\bigr).
	\end{align*}
\end{proposition}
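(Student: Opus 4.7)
The plan is to reduce the existence of the continuous bases to the fact that real vector bundles over a contractible paracompact base are trivial, and then to read off part (c) as a dimension count.

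First I would record the setup: $\Lambda \subseteq \tilde\Lambda$ is a closed ball in a metric space, hence paracompact Hausdorff, and it is contractible by hypothesis. By \lref{bundle2} the sets $\mathrm{R}[P^+(0)]$ and $\mathrm{N}[P^-(0)]$ are real vector bundles over $\Lambda$ of ranks $d-m^+$ and $m^-$, respectively. Their fibers at $\lambda$ are exactly the subspaces $R(P_\lambda^+(0))$ and $N(P_\lambda^-(0))$ whose bases we wish to produce continuously in $\lambda$.

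Next I would invoke the classical result that a finite-rank vector bundle over a contractible paracompact Hausdorff space is trivial (via classification by $[\Lambda, BO(n)]$, or equivalently via the pullback of the tautological bundle along a nullhomotopy of a classifying map into a real Grassmannian). Applying this to each of the two bundles yields global continuous frames $\xi_1^+,\ldots,\xi_{d-m^+}^+:\Lambda\to\R^d$ of $\mathrm{R}[P^+(0)]$ and $\xi_1^-,\ldots,\xi_{m^-}^-:\Lambda\to\R^d$ of $\mathrm{N}[P^-(0)]$, which gives (a) and (b). A more elementary alternative would be: fix bases of $R(P_{\lambda^\ast}^+(0))$ and $N(P_{\lambda^\ast}^-(0))$ at the critical parameter, and transport them along a contracting homotopy $H:\Lambda\times[0,1]\to\Lambda$ from $\id_\Lambda$ to $\lambda^\ast$, exploiting the local triviality implied by the continuity of $\lambda\mapsto P_\lambda^\pm(0)$ established in \lref{lemrough}.

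For part (c), the hypothesis $m^+=m^-$ gives $(d-m^+)+m^-=d$, so the concatenated tuple has exactly $d$ entries. Since $\xi_1^+(\lambda),\ldots,\xi_{d-m^+}^+(\lambda)$ span $R(P_\lambda^+(0))$ and $\xi_1^-(\lambda),\ldots,\xi_{m^-}^-(\lambda)$ span $N(P_\lambda^-(0))$, linear independence of the whole tuple is equivalent to $R(P_\lambda^+(0))\cap N(P_\lambda^-(0))=\set{0}$. Combined with $\dim R(P_\lambda^+(0))+\dim N(P_\lambda^-(0))=d$, this is equivalent to the direct sum decomposition $R(P_\lambda^+(0))\oplus N(P_\lambda^-(0))=\R^d$, proving (c).

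The main obstacle is the triviality of the two bundles: this is the only non-algebraic ingredient and depends essentially on the contractibility of $\Lambda$ together with the continuity in $\lambda$ of the dichotomy projectors supplied by \lref{lemrough}. Everything after that is linear algebra.
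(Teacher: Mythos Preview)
Your proof is correct and follows essentially the same approach as the paper: both invoke \lref{bundle2} to obtain the vector bundle structure on $\mathrm{R}[P^+(0)]$ and $\mathrm{N}[P^-(0)]$, then use triviality of bundles over a contractible base (the paper cites \cite[p.~30, Cor.~4.8]{husemoller:74}) to extract continuous global frames, with part~(c) following from a dimension count. Your explicit write-up of (c) is slightly more detailed than the paper's, which leaves it to the reader.
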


Evans functions clearly depend on the choice of $\xi^+_i(\lambda)$ and $\xi^-_j(\lambda)\in\R^d$. However, any two Evans functions differ only by a product with a nonvanishing function (this factor is a determinant of the transformation matrices that describe the change of bases). Furthermore, all subsequent statements involving zeros of the Evans function do not depend on the choice of the basis vectors.
\begin{proof}
	We write $\R^d=(\R^{d-m^+}\tm\{0\})\oplus(\{0\}\tm\R^{m^-})$ and consider the sets $\mathrm{R}[P^{+}(0)]$ and $\mathrm{N}[P^{-}(0)]$ introduced in \lref{bundle2}. Since they are are vector bundles over a contractible space, \cite[p.~30, Cor.~4.8]{husemoller:74} yields the existence of morphism bundles
\begin{align*}
	\psi^1:\Lambda\tm\R^{d-m^+}&\to\mathrm{R}[P^{+}(0)],&
	\psi^2:\Lambda\tm\R^{m^-}&\to\mathrm{N}[P^{-}(0)]
\end{align*}
such that the following diagrams are commutative:
\begin{align*}
\xymatrix{
\Lambda\tm \R^{d-m^+}\ar[r]^-{\psi^1}_{\cong}\ar[d]_{}& \mathrm{R}[P^{+}(0)]\ar[d]_{}\\
\Lambda\ar[r]^-{\id}&\Lambda,
}& &
\xymatrix{
\Lambda\tm \R^{m^-}\ar[r]^-{\psi^2}_{\cong}\ar[d]_{}& \mathrm{N}[P^{-}(0)]\ar[d]_{}\\
\Lambda\ar[r]^-{\id}&\Lambda,
}
\end{align*}
	where the vertical arrows represent the corresponding projections, and for any $\lambda\in\Lambda$ the maps $\psi^1(\lambda,\cdot):\R^{d-m^+}\to R(P_\lambda^+(0))$ and $\psi^2(\lambda,\cdot):\R^{m^-}\to N(P_\lambda^-(0))$ are linear isomorphisms. Then it is not hard to see that the functions
	\begin{align*}
		\xi^+_1,\ldots,\xi_{d-m^+}^+:\Lambda&\to\R^d,&
		\xi^+_i(\lambda)&:=\psi^1(\lambda,e_i)\fall 1\leq i\leq d-m^+,\\
		\xi^-_1,\ldots,\xi_{m^-}^-:\Lambda&\to \R^d,&
		\xi^-_j(\lambda)&:=\psi^2(\lambda,e_{d-m^++j})\fall 1\leq j\leq m^-
	\end{align*}
	satisfy the claimed properties $(a$--$c)$ with the sets $\{e_1,\ldots,e_{d-m^+}\}\subset\R^{d-m^+}\times\{0\}$ and $\set{e_{d-m^++1},\ldots,e_d}\subset\set{0}\tm\R^{m^-}$ derived from the standard basis $e_1,\ldots,e_d\in\R^d$. In particular, $E:\Lambda\to\R$ is well-defined.
\end{proof}

The following observations recommend Evans functions $\lambda\mapsto E(\lambda)$ as tool to locate non-trivial intersections of the stable and unstable vector bundles to \eqref{var} (transversality). In addition, it extends the admissibility \tref{thmadmin}:
\begin{proposition}[properties of Evans functions]\label{propEvans}
	Let $(H_0$--$H_2)$ hold with Morse indices $m^+=m^-$. If $\Lambda$ is contractible, then an Evans function $E:\Lambda\to\R$ of \eqref{var} is continuous. Moreover, for each $\lambda\in\Lambda$ the following are equivalent:
	\begin{enumerate}
		\item[$(a)$] $E(\lambda)\neq 0$,

		\item[$(b)$] $R(P_\lambda^+(0))\oplus N(P_\lambda^-(0))=\R^d$,

		\item[$(c)$] a bounded entire solution $\phi_\lambda$ of \eqref{cde} is hyperbolic on $\R$, i.e.\ $0\not\in\Sigma(\lambda)$.
	\end{enumerate}
\end{proposition}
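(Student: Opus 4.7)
The plan is to establish continuity first and then chase the three equivalences, exploiting the preparatory results already at hand.

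For continuity, the functions $\xi_i^{\pm}:\Lambda\to\R^d$ produced in \pref{propetagamma} are continuous (they come from bundle morphisms), and $E(\lambda)$ is a polynomial in their components (the $d\tm d$ determinant), so $E$ is continuous on $\Lambda$. This part is immediate and is not where the work lies.

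For (a)$\Leftrightarrow$(b), I would invoke exactly property (c) of \pref{propetagamma}. The vectors $\xi_1^+(\lambda),\ldots,\xi_{d-m^+}^+(\lambda)$ span $R(P_\lambda^+(0))$ and $\xi_1^-(\lambda),\ldots,\xi_{m^-}^-(\lambda)$ span $N(P_\lambda^-(0))$. Since $E(\lambda)$ is the determinant of the $d\tm d$ matrix formed by these $d-m^+ + m^- = d$ vectors (using $m^+=m^-$), nonvanishing of the determinant is equivalent to the concatenated list being a basis of $\R^d$, which by \pref{propetagamma}(c) is equivalent to $R(P_\lambda^+(0))\oplus N(P_\lambda^-(0))=\R^d$.

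The main step is (b)$\Leftrightarrow$(c). Hypothesis $(H_2)$ together with \lref{lemrough} gives exponential dichotomies on both semiaxes for every $\lambda\in\Lambda$, so \tref{thmfred} applies and $D_1G(0,\lambda)\in L(W^{1,\infty}(\R),L^\infty(\R))$ is Fredholm with
\begin{equation*}
	\ind D_1G(0,\lambda)=m_\lambda^--m_\lambda^+=m^--m^+=0.
\end{equation*}
From step (I) of the proof of \tref{thmfred}, $\dim N(D_1G(0,\lambda))=\dim\bigl(R(P_\lambda^+(0))\cap N(P_\lambda^-(0))\bigr)$. Because $\dim R(P_\lambda^+(0))+\dim N(P_\lambda^-(0))=(d-m^+)+m^-=d$, the direct sum condition (b) is equivalent to triviality of that intersection, i.e.\ to injectivity of $D_1G(0,\lambda)$. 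For a Fredholm operator of index $0$, injectivity is equivalent to bijectivity, and by the admissibility \tref{thmadmin} this is in turn equivalent to hyperbolicity of $\phi_\lambda$ on $\R$. Finally, hyperbolicity of $\phi_\lambda$ on $\R$ is precisely the statement that $0\notin\Sigma(\lambda)$, by the definition of the dichotomy spectrum.

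The main obstacle I anticipate is marshalling the Fredholm-index bookkeeping correctly: one needs both the Morse index identity $m^+=m^-$ and the explicit dimension formulas from the proof of \tref{thmfred} to convert the intersection-triviality condition into the direct-sum condition without a separate surjectivity argument. Once that dimension count is in place, the chain of equivalences $E(\lambda)\neq 0\Leftrightarrow R(P_\lambda^+(0))\oplus N(P_\lambda^-(0))=\R^d \Leftrightarrow D_1G(0,\lambda)\in GL \Leftrightarrow \phi_\lambda$ hyperbolic on $\R$ closes immediately.
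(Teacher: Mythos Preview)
Your argument is correct. Continuity and the equivalence $(a)\Leftrightarrow(b)$ match the paper's proof essentially verbatim. For $(b)\Leftrightarrow(c)$, however, you take a genuinely different route: you pass through the Fredholm theory, using \tref{thmfred} to get index $0$, the kernel identification from its proof to reduce the direct-sum condition to injectivity of $D_1G(0,\lambda)$, and then \tref{thmadmin} to translate invertibility into hyperbolicity on $\R$. The paper instead appeals directly to the classical dichotomy result \cite[p.~19]{coppel:78}, which states that an exponential dichotomy on $\R$ exists precisely when the semiaxis projectors satisfy $R(P_\lambda^+(0))\oplus N(P_\lambda^-(0))=\R^d$. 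Your detour has the advantage of being entirely self-contained within the paper's own results, while the paper's one-line citation is shorter but leans on an external reference.
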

\begin{proof}
	The continuity of the Evans function follows immediately from \pref{propetagamma} and the continuity of the determinant $\det:\R^{d\tm d}\to\R$.

	$(a)\Leftrightarrow(b)$ is an immediate consequence of Linear Algebra.

	$(b)\Leftrightarrow(c)$ Arguing as in \cite[p.~19]{coppel:78}, a variation equation \eqref{var} possesses an exponential dichotomy on $\R$ if and only if the projections satisfy (b).
\end{proof}

An Evans function serves as indicator when spectral intervals split (see Fig.~\ref{figspectrum}):
\begin{SCfigure}[2]
	\includegraphics[scale=0.5]{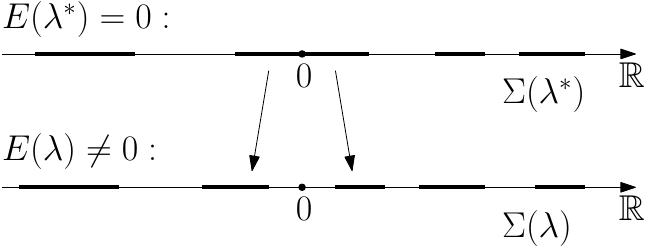}
	\caption{To \cref{corEvans}: Dichotomy spectra $\Sigma(\lambda)$ of \eqref{var}: At zeros $\lambda^\ast$ of an Evans function $E$ the critical spectral interval of $\Sigma(\lambda^\ast)$ is not a singleton. For $E(\lambda)\neq 0$ the interval splits, which results in $0\not\in\Sigma(\lambda)$ and hyperbolic solutions $\phi_\lambda$ to \eqref{cde}}
	\label{figspectrum}
\end{SCfigure}

\begin{corollary}[Evans function and dichotomy spectrum]
	\label{corEvans}
	The following are equivalent:
	\begin{enumerate}
		\item[$(a)$] $E(\lambda^\ast)=0$,

		\item[$(b)$] there exists a $\alpha^\ast>0$ such that $[-\alpha^\ast,\alpha^\ast]\subseteq\Sigma(\lambda^\ast)$.
	\end{enumerate}
	Moreover, if (a) (or (b)) holds for some $\lambda^\ast\in\Lambda$, then for all $\lambda\in\Lambda$ one has
	\begin{enumerate}
		\item[$(c)$] $0<m^+,m^-<d$ and each solution $\phi_\lambda$ of \eqref{cde} is unstable,
		
		\item[$(d)$] in case $\lambda\in\Lambda\setminus E^{-1}(0)$ it is $0\not\in\Sigma(\lambda)$ such that both $(-\infty,0)$ and $(0,\infty)$ contain at least one spectral interval of $\Sigma(\lambda)$.
	\end{enumerate}
\end{corollary}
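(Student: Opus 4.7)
My plan is to first establish $(a)\Leftrightarrow(b)$ and then derive $(c)$ and $(d)$ from it.

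\emph{Equivalence $(a)\Leftrightarrow(b)$.} The implication $(b)\Rightarrow(a)$ is immediate: any $[-\alpha^\ast,\alpha^\ast]\subseteq\Sigma(\lambda^\ast)$ contains $0$, so \eqref{var} at $\lambda^\ast$ is not hyperbolic on $\R$ and \pref{propEvans} forces $E(\lambda^\ast)=0$. For $(a)\Rightarrow(b)$, my key observation is that the exponential dichotomies on $\R_\pm$ granted by Hypothesis~$(H_2)$ persist for the shifted equation $\dot x=[A(t,\lambda^\ast)-\gamma I_d]x$ with the \emph{same} projectors $P_{\lambda^\ast}^\pm$ whenever $\abs{\gamma}<\alpha^\ast$, because the shifted transition matrix is $e^{-\gamma(t-s)}\Phi_{\lambda^\ast}(t,s)$ and the estimates \eqref{ed1} survive with growth rate $\alpha^\ast-\abs{\gamma}$. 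Since $m^+=m^-$ gives $\dim R(P_{\lambda^\ast}^+(0))+\dim N(P_{\lambda^\ast}^-(0))=d$, the hypothesis $E(\lambda^\ast)=0$ together with \pref{propEvans} forces $R(P_{\lambda^\ast}^+(0))\cap N(P_{\lambda^\ast}^-(0))\neq\set{0}$ by dimension counting. This intersection is unaffected by the shift, so a second application of \pref{propEvans} to the shifted equation shows $\gamma\in\Sigma(\lambda^\ast)$ for every $\abs{\gamma}<\alpha^\ast$. Replacing $\alpha^\ast$ by any slightly smaller positive constant and using closedness of $\Sigma(\lambda^\ast)$ yields $(b)$.

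\emph{Part $(c)$.} Assuming $(a)$, \lref{lemrough} makes both $m^+$ and $m^-$ constant across $\Lambda$. If $m^+=m^-=0$, then $R(P_\lambda^+(0))=\R^d$ and $N(P_\lambda^-(0))=\set{0}$ for every $\lambda\in\Lambda$, so $R(P_\lambda^+(0))\oplus N(P_\lambda^-(0))=\R^d$ and \pref{propEvans} gives $E(\lambda^\ast)\neq 0$, contradicting $(a)$; the case $m^+=m^-=d$ is symmetric. Hence $0<m^\pm<d$ uniformly on $\Lambda$. Instability of each $\phi_\lambda$ then follows by a standard linearization argument: since $m^+>0$, initial data outside $R(P_\lambda^+(0))$ produce solutions of \eqref{var} growing at least like $e^{\alpha t}$ on $\R_+$ via \eqref{ed1}, and transferring this to the nonlinear equation via the unstable manifold of $\phi_\lambda$ yields Lyapunov instability.

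\emph{Part $(d)$.} Fix $\lambda\in\Lambda\setminus E^{-1}(0)$, so that $0\notin\Sigma(\lambda)$ by \pref{propEvans}. To place spectral intervals on both sides of $0$ I would monitor the stable dimension $n_s(\gamma)$ of the shifted family $\dot x=[A(t,\lambda)-\gamma I_d]x$ as $\gamma$ varies over the resolvent set $\R\setminus\Sigma(\lambda)$: by the dichotomy spectrum theory of \cite{siegmund:02}, $n_s$ is piecewise constant, and crossing a spectral interval $\sigma_j$ from right to left decreases $n_s$ by its multiplicity $\mu_j$. At $\gamma=0^+$ one has $n_s=\dim R(P_\lambda^+(0))=d-m^+$; for $\gamma$ sufficiently positive $n_s=d$, because the factor $e^{-\gamma(t-s)}$ eventually dominates the bounded-growth exponent of $\Phi_\lambda$, and symmetrically $n_s=0$ for $\gamma$ sufficiently negative. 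As $0<m^+<d$ by $(c)$, the required transitions $d-m^+\to d$ over $(0,\infty)$ and $0\to d-m^+$ over $(-\infty,0)$ are strictly positive, forcing at least one spectral interval on each half-axis.

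The step I expect to be most delicate is the bookkeeping of $n_s(\gamma)$ under $\gamma$-shifts in $(d)$: it rests on the full structural theorem of \cite{siegmund:02} rather than on the semiaxial roughness invoked elsewhere in the paper, and I would cite that structure theorem explicitly rather than re-derive the monotonicity and jump behaviour by hand.
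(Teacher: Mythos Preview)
Your argument is correct and, for the equivalence $(a)\Leftrightarrow(b)$ and part $(c)$, essentially coincides with the paper's proof: both detect a nontrivial intersection $R(P_{\lambda^\ast}^+(0))\cap N(P_{\lambda^\ast}^-(0))$ and propagate it to the shifted equations $\dot x=[A(t,\lambda^\ast)-\gamma I_d]x$ for $\abs{\gamma}<\alpha^\ast$ (the paper does this via the dynamical characterization \eqref{R-N}, you via invariance of the semiaxial projectors under $\gamma$-shift, which is the same observation). Your small appeal to ``a second application of \pref{propEvans}'' is slightly loose, since that proposition is phrased for \eqref{var} itself rather than its shifts; what you actually use is the underlying general fact (cited there from \cite{coppel:78}) that a full-line dichotomy is equivalent to $R(P^+(0))\oplus N(P^-(0))=\R^d$.

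For part $(d)$ you take a somewhat different and more elaborate route than the paper. Your monotone bookkeeping of the stable rank $n_s(\gamma)$ via the structure theorem of \cite{siegmund:02} is valid, but the paper bypasses it with a two-line contradiction: if $\Sigma(\lambda)\subset(-\infty,0)$, then \eqref{var} has a full-line dichotomy with projector $I_d$, whose restriction to $\R_+$ forces $m^+=0$; if $\Sigma(\lambda)\subset(0,\infty)$ one analogously gets $m^+=d$. Either contradicts $(c)$. This avoids invoking the jump behaviour of $n_s$ across spectral intervals and makes the ``delicate'' step you flagged unnecessary.
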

\begin{proof}
	(I) We first establish the claimed equivalence:

	$(a)\Rightarrow(b)$ Hypothesis~$(H_2)$ implies that $(V_{\lambda^\ast})$ has exponential dichotomies on both semiaxes with growth rate $\alpha^\ast>0$. Now if $E(\lambda^\ast)=0$, then \pref{propEvans} shows that $\phi^\ast$ is nonhyperbolic, i.e.\ $0\in\Sigma(\lambda^\ast)$. Due to \tref{thmadmin}, $D_1G(0,\lambda^\ast)$ is noninvertible, but because of \tref{thmfred} Fredholm of index $0$. Hence, $D_1G(0,\lambda^\ast)$ has a nontrivial kernel and from step (I) in the proof for \tref{thmfred} one obtains $\set{0}\neq R(P_{\lambda^\ast}^+(0))\cap N(P_{\lambda^\ast}^-(0))$. Because the transition matrices of \eqref{var} and $\Phi_\lambda^\gamma$ of $\dot x=[A(t,\lambda)-\gamma I_d]x$ are related by $\Phi_\lambda^\gamma(t,s)=e^{\gamma(s-t)}\Phi_\lambda(t,s)$ for all $t,s\in\R$, the dynamical characterization \eqref{R-N} implies
	\begin{equation*}
		\set{0}
		\neq
		\set{\xi\in\R^d:\,\sup_{t\in\R}e^{-\gamma t}\abs{\Phi_{\lambda^\ast}(t,0)\xi}<\infty}
		=
		\set{\xi\in\R^d:\,\sup_{t\in\R}\abs{\Phi_{\lambda^\ast}^\gamma(t,0)\xi}<\infty}
	\end{equation*}
	for each $\gamma\in(-\alpha^\ast,\alpha^\ast)$. Consequently, the shifted equation $\dot x=[A(t,\lambda^\ast)-\gamma I_d]x$ has nontrivial bounded solutions and thus cannot possess an exponential dichotomy on $\R$, i.e.\ $\gamma\in\Sigma(\lambda^\ast)$. Since $\gamma\in(-\alpha^\ast,\alpha^\ast)$ was arbitrary, we deduce $(-\alpha^\ast,\alpha^\ast)\subseteq\Sigma(\lambda^\ast)$ and (b) results due to the compactness of spectral intervals.

	$(b)\Rightarrow(a)$ Because obviously $0\in\Sigma(\lambda^\ast)$ holds, the bounded entire solution $\phi^\ast$ is nonhyperbolic and \pref{propEvans} yields (a).

	(II) Let $\lambda\in\Lambda$. By means of contradiction we assume $P_{\lambda^\ast}^+(0)=I_d$ and hence the vectors $\xi^+_1(\lambda),\ldots,\xi_d^+(\lambda)\in\R^d$ from \pref{propetagamma} are linearly independent. Therefore, $E(\lambda)\neq 0$, contradicting $E(\lambda^\ast)=0$ holds, and we deduce $\dim R(P_{\lambda^\ast}^+(0))<d$, whence $m^+=d-\dim R(P_{\lambda^\ast}^+(0))>0$. Using a similar argument, also the assumption $P_{\lambda^\ast}^+(0)=0_d$ yields a contradiction, which yields $m^+<d$. With $m^-=m^+$ it is $0<m^+,m^-<d$.\\
	This implies that the projectors $P_\lambda^+$ for the exponential dichotomy of \eqref{var} on $\R_+$ are nontrivial. Then \cite[pp.~32--33, Prop.~2.2.1]{anagnostopoulou:poetzsche:rasmussen:22} implies that $\phi_\lambda$ is unstable.\\
	Now let $E(\lambda)\neq 0$ and hence $0\not\in\Sigma(\lambda)$ results by \pref{propEvans}. First, the assumption $\Sigma(\lambda)\subset(-\infty,0)$ implies that \eqref{var} has an exponential dichotomy on $\R_+$ with $P_\lambda(t)\equiv I_d$ resulting in the contradiction $m^+=0$. Second, assuming $\Sigma(\lambda)\subset(0,\infty)$ leads to the contradiction $m^+=d$. In conclusion, both the positive and the negative reals contains at least one spectral interval of $\Sigma(\lambda)$.
\end{proof}

As an interim conclusion, note that Evans functions have a clear geometric interpretation and are accessible in practice (cf.~\cite{dieci:elia:vanvleck:11}). Furthermore, referring to Thms.~\ref{thmadmin} and \ref{thmfred}, as well as \pref{propEvans}, they allow to distinguish invertibility from merely Fredholmness of the partial derivatives $D_1G(0,\lambda)$ given in \eqref{thmgprop1}. We thus aim to relate Evans functions to abstract bifurcation theory. Here the concept of parity, explained in App.~\ref{appA} and \ref{appB}, is crucial. In our present framework, we further restrict to interval neighborhoods
\begin{equation*}
	\Lambda:=[a,b]\quad\text{with reals }a<b,\,a,b\in \bar B_{\rho_0}(\lambda^\ast)
\end{equation*}
and arrive at our central result involving the parity $\sigma$:
\begin{theorem}[Evans function and parity]\label{thmmain}
	Let $(H_0$--$H_2)$ hold with Morse indices $m^+=m^-$.
	If $E(a)\cdot E(b)\neq 0$, then both mappings
	\begin{enumerate}
		\item[$(a)$] $T\colon [a,b]\to L(W^{1,\infty}(\R),L^{\infty}(\R))$,

		\item[$(b)$] $T\colon [a,b]\to L(W_0^{1,\infty}(\R),L_0^{\infty}(\R))$,
	\end{enumerate}
	given by $T(\lambda):=D_1G(0,\lambda)$ define paths of index $0$ Fredholm operators with invertible endpoints and parity $\sigma(T,[a,b])=\sgn E(a)\cdot\sgn E(b)$.
\end{theorem}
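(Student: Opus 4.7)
My proof would proceed in two logical halves: first establishing the path/Fredholm/endpoint-invertibility conditions, then computing the parity via a finite-dimensional reduction that ties it to the Evans function.

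For the path structure, continuity of $\lambda\mapsto T(\lambda)$ in operator norm is immediate from \tref{thmgprop}. By \lref{lemrough}, for every $\lambda\in[a,b]\subseteq\bar B_{\rho_0}(\lambda^\ast)$ the entire solution $\phi_\lambda$ is hyperbolic on both semiaxes with the same Morse indices $m^+=m^-$, so \tref{thmfred} yields that each $T(\lambda)$ is Fredholm of index $m^--m^+=0$. At the endpoints the hypothesis $E(a)\cdot E(b)\neq 0$ combined with \pref{propEvans} gives hyperbolicity of $\phi_a,\phi_b$ on all of $\R$, and \tref{thmadmin} promotes this to $T(a),T(b)\in GL$. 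The argument is identical in the $W_0^{1,\infty}$--$L_0^\infty$ setting.

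For the parity, my plan is to reduce the infinite-dimensional computation to the very determinant that defines $E$. Revisiting step (III) in the proof of \tref{thmfred}, the solvability of $T(\lambda)\phi=g$ is governed by whether the $\R^d$-system
\[
	\bigl[P_\lambda^+(0)-(I_d-P_\lambda^-(0))\bigr]\xi=\int_{-\infty}^0P_\lambda^-(0)\Phi_\lambda(0,s)g(s)\d s+\int_0^\infty\bigl[I_d-P_\lambda^+(0)\bigr]\Phi_\lambda(0,s)g(s)\d s
\]
has a solution, and once it does, $\phi$ is recovered explicitly via Green's function \eqref{nogreen}. Using this, I would build a continuous family of invertible operators $M(\lambda)\in GL(L^\infty(\R),W^{1,\infty}(\R))$ for which $M(\lambda)T(\lambda)=I+K(\lambda)$ with $K(\lambda)$ of finite rank and range in a fixed $d$-dimensional subspace spanned via the bases $\xi_i^\pm(\lambda)$ of \pref{propetagamma}. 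Expressing the resulting reduced $d\tm d$ matrix in those bases identifies its determinant with $E(\lambda)$ up to a continuous nonvanishing factor on $[a,b]$.

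With such a regular parametrix in hand, the functorial properties of the parity described in App.~\ref{appA} collapse $\sigma(T,[a,b])$ to the parity of the finite-dimensional path $\lambda\mapsto I+K(\lambda)$, which in turn equals $\sgn\det(I+K(a))\cdot\sgn\det(I+K(b))$, and matching signs with the Evans function completes the desired identity $\sigma(T,[a,b])=\sgn E(a)\cdot\sgn E(b)$. The main obstacle I anticipate is the careful bookkeeping of signs in the parametrix construction: the bases $\xi_i^\pm(\lambda)$, the complements used in the reduction, and the coordinate choices all potentially introduce $\lambda$-dependent sign factors, and the crux is to verify that these factors combine into a \emph{globally continuous nonvanishing} function on $[a,b]$, so that only the Evans-function sign survives at the endpoints. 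The decay estimates established in the proofs of Thms.~\ref{thmadmin} and \ref{thmfred} ensure that the whole construction carries over verbatim to the $W_0^{1,\infty}$--$L_0^\infty$ pair, yielding both (a) and (b).
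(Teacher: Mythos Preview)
Your first half is correct and matches the paper exactly: continuity via \tref{thmgprop}, Fredholm index $0$ via \lref{lemrough} and \tref{thmfred}, and invertibility at $a,b$ via \pref{propEvans} and \tref{thmadmin}, with the $W_0^{1,\infty}$--$L_0^\infty$ case handled identically.

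Your parity computation, however, is only a strategy, and the central step is missing. You assert you would build $M(\lambda)\in GL(L^\infty(\R),W^{1,\infty}(\R))$ with $M(\lambda)T(\lambda)=I+K(\lambda)$ and $K(\lambda)$ of finite rank tied to the Evans matrix, but you do not construct $M(\lambda)$. The natural candidate---piecing together the half-line Green's functions from step~(III) of \tref{thmfred}---produces a function with a jump at $t=0$ unless one solves the very $\R^d$-system whose solvability fails precisely at the singular parameters $E(\lambda)=0$; so the obvious construction does not give a \emph{globally invertible} $M(\lambda)$ on all of $[a,b]$. A cutoff-based gluing yields a right parametrix $T(\lambda)M(\lambda)=I+K'(\lambda)$ rather than a left one, and converting between the two while keeping the finite-rank remainder explicitly expressible through the basis vectors $\xi_i^\pm(\lambda)$ is exactly the ``careful bookkeeping'' you flag as an obstacle and then leave open. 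Without that construction, the claim that $\det(I+K(\lambda))$ equals $E(\lambda)$ up to a continuous nonvanishing factor is unproved, and so is the parity formula.

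The paper sidesteps this entirely. Rather than constructing a parametrix, it invokes the \emph{reduction property} of the parity (\lref{lemparity2}): one only needs a finite-dimensional subspace $V\subseteq L^\infty(\R)$ transversal to every $R(T(\lambda))$, not a global inverse. This $V$ is built from finitely many $N(T(\lambda_i)^\ast)$ by a compactness argument. The parity of $T$ then equals that of the restricted bundle map $T|_{E(T,V)}$. To compute the latter, the paper restricts to a compact interval $[-\tau,\tau]$, conjugates by the transition matrix $\Phi_\lambda$ to reduce to the constant-coefficient operator $Q(\lambda)u=\dot u$ with $\lambda$-dependent boundary conditions $u(-\tau)\in N(P_\lambda^-(0))$, $u(\tau)\in R(P_\lambda^+(0))$, and finally reduces $Q$ against the transversal space $Y_0$ of constant functions. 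The resulting finite-dimensional map is $\hat L_\lambda(\xi,\eta)=\eta-\xi$ on $N(P_\lambda^-(0))\oplus R(P_\lambda^+(0))$, whose determinant in the bases $\xi_i^\pm(\lambda)$ is $(-1)^{d-m^+}E(\lambda)$---and the sign $(-1)^{d-m^+}$ cancels between the two endpoints. Each reduction is justified by a commutative diagram and \lref{lemparity}, so no sign ambiguity ever arises.
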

\begin{proof}
	(a) The mapping $T:[a,b]\to L(W^{1,\infty}(\R),L^{\infty}(\R))$ is continuous because of \tref{thmgprop}(a). Due to \lref{lemrough} the variation equations \eqref{var} have exponential dichotomies on both semiaxes $\R_\pm$ with respective projectors $P_\lambda^\pm$ for all $\lambda\in\Lambda$. Thus, the inclusion $T(\lambda)\in F_0(W^{1,\infty}(\R),L^{\infty}(\R))$ results from \tref{thmfred}(a) combined with \eqref{thmgprop}. Hence, $T$ is a path of index $0$ Fredholm operators. Furthermore, $E(a)E(b)\neq 0$ and \pref{propEvans} yield that both bounded entire solutions $\phi_a$ and $\phi_b$ are hyperbolic, i.e.\ the variation equations $(V_a)$ and $(V_b)$ have an exponential dichotomy on $\R$. Then \tref{thmadmin}(a) implies the inclusions $T(a),T(b)\in GL(W^{1,\infty}(\R),L^{\infty}(\R))$ and $T$ has invertible endpoints.

	We first prepare some properties of $T(\lambda)$ needed in the further steps of the proof. Throughout, we again abbreviate $A(t,\lambda):=D_2f(t,\phi_\lambda(t),\lambda)$.
	
	(I)~\underline{Claim}: \emph{$N(T(\lambda))\cong R(P_\lambda^+(0))\cap N(P_\lambda^-(0))$.}\\
	This is shown in the proof of \tref{thmfred}(a).

	(II) Consider the formally dual operators
	\begin{align*}
		T(\lambda)^{\ast}&:W^{1,\infty}(\R)\to L^{\infty}(\R),&
		[T(\lambda)^{\ast}y](t)&:=\dot{y}(t)+A(t,\lambda)^Ty(t)
	\end{align*}
	for a.a.\ $t\in\R$ associated to the dual variation equation \eqref{varad}. Thanks to \lref{lemad}, the dual variation equation \eqref{varad} has exponential dichotomies on both $\R_\pm$ with the projectors $Q_{\lambda}^{\pm}(t):=I_d-P_{\lambda}^{\pm}(t)^T$.

	(II.1)~\underline{Claim}: $\dim N(T(\lambda))=\dim N(T(\lambda)^\ast)$.\\
	As in the proof of \tref{thmfred}(a) (cf.~\eqref{R-Nad}) one has
	\begin{equation*}
		N(T(\lambda)^{\ast})
		\cong
		R(Q^+_{\lambda}(0))\cap	N(Q_\lambda^-(0))
		\stackrel{\eqref{proad}}{=}
		R(P_\lambda^-(0)^T)\cap N(P_\lambda^+(0)^T),
	\end{equation*}
	while $R(P_\lambda^-(0)^T)=N(P_\lambda^-(0))^\perp$, $N(P_\lambda^+(0)^T)=R(P_\lambda^+(0))^\perp$ result from \cite[p.~294, Prop.~6(ii)]{zeidler:95} and consequently the claim is established by
	\begin{align*}
		&
		\dim N(T(\lambda)^\ast)
		=
		\dim\intoo{R(P_\lambda^-(0)^T)\cap N(P_\lambda^+(0)^T)}\\
		&=
		\dim\intoo{N(P_\lambda^-(0))^\perp\cap R(P_\lambda^+(0))^\perp}
		=
		\dim\intoo{N(P_\lambda^-(0))+R(P_\lambda^+(0))}^\perp\\
		&=
		\dim\bigl(R(P_\lambda^+(0))\cap N(P_\lambda^-(0))\bigr)
		\stackrel{(I)}{=}
		\dim N(T(\lambda)).
	\end{align*}

	(II.2) \underline{Claim}: $R(T(\lambda))\oplus N(T(\lambda)^{\ast})=L^{\infty}(\R)$.\\
	First of all, observe that since $W^{1,\infty}(\R)\subset L^{\infty}(\R)$, it follows that $N(T(\lambda)^\ast)$ is a subset of $L^{\infty}(\R)$.
Furthermore, if $g\in R(T(\lambda))$ with preimage $\phi\in W^{1,\infty}(\R)$ and functions $u\in N(T(\lambda)^{\ast})\subseteq W^{1,\infty}(\R)$, then
	\begin{align*}
		&\int_{\R}\sprod{u(s),g(s)}\d s
		=
		\int_{\R}\sprod{u(s),\dot{\phi}(s)-A(t,\lambda)\phi(s)}\d s\\
		&=
		\int_{\R}\sprod{u(s),\dot{\phi}(s)}-\sprod{u(s),A(t,\lambda)\phi(s)}\d s\\
		&=\int_{\R}\sprod{u(s),\dot{\phi}(s)}+\sprod{-u(s)A(t,\lambda)^T,\phi(s)}\d s\\
		&=\int_{\R}\sprod{u(s),\dot{\phi}(s)}+\sprod{\dot{u}(s),\phi(s)}\d s
		=
		\int_{\R}\frac{\d\sprod{u(s),\phi(s)}}{\d s}\d s
	\end{align*}
	due to the product rule. Now, reasoning as in the proof \tref{thmfred}, one obtains
	\begin{align*}
		&
		\int_{\R}\sprod{u(s),g(s)}\d s
		=
		\int_{-\infty}^0\frac{\d\sprod{u(s),\phi(s)}}{\d s}\d s+
		\int_0^{\infty}\frac{\d\sprod{u(s),\phi(s)}}{\d s}\d s\\
		&=
		\sprod{u(0),\phi(0)}-\lim_{t\to-\infty}\sprod{u(t),\phi(t)}+
		\lim_{t\to\infty}\sprod{u(t),\phi(t)}-\sprod{u(0),\phi(0)}
		=
		0.
	\end{align*}
	In particular, from this we obtain $R(T(\lambda))\cap N(T(\lambda)^{\ast})=\set{0}$, and combined with $\ind T(\lambda)=0$ and (II.1) we arrive at the desired splitting.

	(III) We construct a finite-dimensional subspace $V\subseteq L^{\infty}(\R)$ complementary to each $R(T(\lambda))$, i.e., $R(T(\lambda))+V=L^{\infty}(\R)$ holds for $\lambda\in [a,b]$. Thereto, if $\lambda_0\in[a,b]$ is fixed, then because of $\dim N(T(\lambda_0))<\infty$ there exists a complement $W_{\lambda_0}\subset W^{1,\infty}(\R)$ with $N(T(\lambda_0))\oplus W_{\lambda_0}=W^{1,\infty}(\R)$. Now consider the bilinear operator
	\begin{align*}
		\Sigma_{\lambda}: W_{\lambda_0}\tm N(T(\lambda_0)^\ast)&\to L^{\infty}(\R),&
		\Sigma_{\lambda}(w,v):=T(\lambda)w+v.
	\end{align*}
	Because of $\Sigma_{\lambda_0}\in GL(W_{\lambda_0}\tm N(T(\lambda_0)^\ast),L^{\infty}(\R))$ it follows from the continuity of $T$ and the openness of the set of bounded invertible operators that there exists a neighborhood $U_{\lambda_0}$ of $\lambda_0$ in $[a,b]$ such that $\Sigma_{\lambda}\in GL(W_{\lambda_0}\tm N(T(\lambda_0)^\ast),L^{\infty}(\R))$ and hence
	\begin{equation*}
		R(T(\lambda))+N(T(\lambda_0)^\ast)= L^{\infty}(\R)\fall\lambda\in U_{\lambda_0}.
	\end{equation*}
	By compactness we can now cover the interval $[a,b]$ with a finite number of such neighborhoods $U_{\lambda_1},\ldots,U_{\lambda_n}\subseteq\R$. If
\begin{align}\label{transversal-V}
	V:=N(T(\lambda_1)^\ast)+N(T(\lambda_2)^\ast)+\ldots+N(T(\lambda_n)^\ast)\subset L^\infty(\R),
\end{align}
then $\dim V<\infty$ and $R(T(\lambda))+ V= L^{\infty}(\R)$ for all $\lambda\in [a,b]$.

	(IV) This step is inspired by Step~$3$ in the proof of \cite[Thm.~5.3]{NilsLag}. Keeping $\tau>0$ fixed, consider the family of operators
	\begin{align*}
		S(\lambda):D(S(\lambda))&\to L^{\infty}[-\tau,\tau],&
		[S(\lambda)y](t)&:=\dot{y}(t)-A(t,\lambda)y(t)
	\end{align*}
	for a.a.\ $t\in[-\tau,\tau]$, which due to \lref{lemaprop}(a) is well-defined on the domain
	\begin{equation*}
		D(S(\lambda))
		:=
		\set{u\in W^{1,\infty}[-\tau,\tau]\mid u(-\tau)\in N(P_\lambda^-(-\tau)),\, u(\tau)\in R(P_\lambda^+(\tau))}.
	\end{equation*}

	(IV.1) \underline{Claim}: $\dim N(S(\lambda))=\dim N(T(\lambda))<\infty$.\\
	Consider the commutative diagram
	\begin{align}\label{diagram-i}
		\begin{split}
		\xymatrix{
		W^{1,\infty}(\R)\ar[r]^-{T(\lambda)}& L^{\infty}(\R)\ar[d]^p\\
		D(S(\lambda))\ar[r]^-{S(\lambda)}\ar[u]_{i_{\lambda}}& L^{\infty}[-\tau,\tau],}
		\end{split}
	\end{align}
	where $p$ is the restriction of functions in $L^{\infty}(\R)$ to $L^{\infty}[-\tau,\tau]$ given by $p(u):=u|_{[-\tau,\tau]}$ and a canonical map $i_\lambda:D(S(\lambda))\to W^{1,\infty}(\R)$ defined by extending a given function $u\in D(S(\lambda))$ to the intervals $(-\infty,-\tau)$ and $(\tau,\infty)$ as solution of \eqref{var}. Observe that $i_\lambda$ is injective and
	$
		i_{\lambda}\bigl(N(S(\lambda))\bigr)=N(T(\lambda))
	$
	holds, where $i_{\lambda}\bigl(N(S(\lambda))\bigr)\subseteq N(T(\lambda))$ results directly due to the construction of $i_{\lambda}$, while $i_{\lambda}\bigl(N(S(\lambda))\bigr)\supseteq N(T(\lambda))$ as converse inclusion follows from the fact that privided $u\in N(T(\lambda))$, then $u(\tau)\in R(P_\lambda^+(\tau))$ and $u(-\tau)\in N(P_\lambda^-(-\tau))$ for any $\tau>0$ (recall \eqref{ed0}). Finally, this yields the assertion.

	(IV.2) We decompose $[-\tau,\tau]=[-\tau,0]\cup [0,\tau]$ and define the spaces
	\begin{align*}
		X_+&:=\{u\in W^{1,\infty}[0,\tau]\mid u(\tau)\in R(P_\lambda^+(\tau))\,\},&
		Y_+&:=L^{\infty}[0,\tau],\\
		X_-&:=\{u\in W^{1,\infty}[-\tau,0]\mid u(-\tau)\in N(P_\lambda^-(-\tau))\,\},&
		Y_-&:=L^{\infty}[-\tau,0].
	\end{align*}
	Consider the following linear operators $S^\pm(\lambda):X_{\pm}\to Y_{\pm}$ pointwise defined as
	\begin{equation*}
		[S^\pm(\lambda)y](t)=\dot y(t)-D_2f(t,\phi_\lambda(t),\lambda)y(t)\quad\text{for a.e.\ }t\in\R_\pm.
	\end{equation*}
	Next consider the following commutative diagram
	\begin{align}\label{diagram-L}
	\begin{split}
	\xymatrix@C=55pt@R=15pt{
	X_-\oplus X_+\ar[r]^-{S^-(\lambda)\oplus S^+(\lambda)} & Y_-\oplus Y_+\\
	&\\
	D(S(\lambda)) \ar[r]^-{S(\lambda)} \ar[uu]^{J_{\lambda}} & \ar[uu]_{J}
	L^{\infty}[-\tau,\tau],
	}
	\end{split}
	\end{align}
	where $J\colon L^{\infty}[-\tau,\tau]\to Y_-\oplus Y_+$ and $J_{\lambda}\colon D(S(\lambda))\to X_-\oplus X_+$ are defined by
	\begin{equation*}
		Ju:=(u_-,u_+) \text{ and } J_{\lambda}v:=(v_-,v_+)
	\end{equation*}
	with $u_+$ and $v_+$ (resp.\ $u_-$ and $v_-$) being the corresponding restrictions to $[0,\tau]$ (resp.\ to the interval $[-\tau,0])$. It is clear that $J$ is an isomorphism, while $J_{\lambda}$ is injective with range $R(J_{\lambda})=\set{(v_-,v_+)\mid v_-(0)=v_+(0)}$. Defining the mapping $\Sigma\colon X_-\oplus X_+\to \R^d$ by $\Sigma(v_-,v_+)=v_-(0)-v_+(0)$ for $v_{\pm}\in X_{\pm}$, one obtains that $R(J_{\lambda})=N(\Sigma)$. What is more, since $\Sigma$ is an epimorphism, one can conclude that
\begin{align*}
\coker J_{\lambda}=X_-\oplus X_+/R(J_{\lambda})=X_-\oplus X_+/N(\Sigma)\cong \R^d
\end{align*}
and hence $J_{\lambda}$ is Fredholm with $\ind J_{\lambda}=\dim N(J_{\lambda})-\dim \coker J_{\lambda}=0-d=-d$. Since $J$ is an isomorphism, it follows $\ind J^{-1}=0$.

	(IV.3) \underline{Claim}: \emph{$S^+(\lambda)\colon X_+\to Y_+$ and $S^-(\lambda)\colon X_-\to Y_-$ are Fredholm with index $d-m^+$ resp.\ $m^-$}.\\
	Concerning $S^+(\lambda)$ it suffices to establish that
	\begin{equation*}
		S^+(\lambda) \text{ is surjective and }N(S^+(\lambda))\cong R(P_\lambda^+(\tau)),
	\end{equation*}
	which results from the following arguments: Consider the perturbed variation equation \eqref{linb} with inhomogeneity $g\in L^{\infty}[0,\tau]$. Due to \cite[Thm.~2.10]{aulbach:wanner:96} the general solution $\bar\varphi_\lambda$ of \eqref{linb} can be expressed via the Variation of Constants as
	\begin{equation*}
		\bar\varphi_\lambda(t;\tau,\xi)
		=
		\Phi_\lambda(t,\tau)\xi+\int_{\tau}^t\Phi_\lambda(t,s)g(s)\d s\fall t\in [0,\tau],\,\xi\in\R^d.
	\end{equation*}
	Since $\Phi_\lambda$ is a bounded function on $[0,\tau]\times [0,\tau]$ and $g\in L^{\infty}[0,\tau]$ holds, standard calculations yield $\bar\varphi_\lambda(\cdot;\tau,\xi)\in W^{1,\infty}[0,\tau]$, which proves that $S^+(\lambda)$ is onto. Concerning the kernel $N(S^+(\lambda))$, note that $\Phi_\lambda(\cdot,\tau)\xi\in N(S^+(\lambda))$, we conclude $N(S^+(\lambda))\cong R(P_\lambda^+(\tau))$ and finally arrive at $\ind S^+(\lambda)=\dim R(P_\lambda^+(\tau))=d-m^+$ because the invariance relation \eqref{ed0} implies $\dim R(P_\lambda^+(\tau))=\dim R(P_\lambda^+(0))=d-m^+$.

	The argument for $S^-(\lambda)$ is dual to the above proof. Now it suffices to prove that
	\begin{equation*}
		S^-(\lambda)\text{ is surjective and }N(S^-(\lambda))\cong N(P_\lambda^-(-\tau)).
	\end{equation*}
	Thereto, let $g\in L^{\infty}[-\tau,0]$ and note
	\begin{equation*}
		\bar\varphi_\lambda(t;-\tau,\xi)=\Phi_\lambda(t,-\tau)\xi+\int_{-\tau}^t\Phi_\lambda(t,s)g(s)\d s
		\fall t\in[-\tau,0],\,\xi\in\R^d.
	\end{equation*}
	From the boundedness of $\Phi_\lambda$ on the square $[-\tau,0]\times [-\tau,0]$ and $g\in L^{\infty}[-\tau,0]$ results that $\bar\varphi_\lambda(\cdot;-\tau,\xi)\in W^{1,\infty}[-\tau,0]$, which shows that $S^-(\lambda)$ is surjective. Addressing the kernel $N(S^-(\lambda))$ we have $\Phi_\lambda(\cdot,-\tau)\xi\in N(S^-(\lambda))$ and thus $N(S^-(\lambda))\cong N(P_\lambda^-(-\tau))$, leading to $\ind S^-(\lambda)=\dim N(P_\lambda^-(-\tau))=m^-$ by \eqref{ed0}.

	(IV.4) \underline{Claim}: \emph{$S(\lambda):D(S(\lambda))\to L^{\infty}[-\tau,\tau]$ is Fredholm of index $0$.}\\
	Due to the composition $S(\lambda)=J^{-1}\circ (S^-(\lambda)\oplus S^+(\lambda))\circ J_{\lambda}\colon D(S(\lambda)) \to L^{\infty}[-\tau,\tau]$ the commutativity of the diagram \eqref{diagram-L} shows that $S(\lambda)$ is Fredholm with
	\begin{eqnarray*}
		\ind S(\lambda)
		& = &
		\ind(J^{-1}\circ (S^-(\lambda)\oplus S^+(\lambda))\circ J_{\lambda})\\
		& = &
		\ind(J^{-1})+\ind (S^-(\lambda)\oplus S^+(\lambda))+\ind J_{\lambda}\\
		& \stackrel{(IV.2)}{=} &
		\ind (S^-(\lambda)\oplus S^+(\lambda))-d\\
		& = &
		\ind S^-(\lambda)+\ind S^+(\lambda))-d
		\stackrel{(IV.3)}{=}
		n+r-d=0.
	\end{eqnarray*}

	(V) Keeping $\tau>0$ fixed consider the family of operators
	\begin{align*}
		S(\lambda)^\ast:D(S(\lambda)^\ast)&\to L^{\infty}[-\tau,\tau],&
		[S(\lambda)^\ast y](t)&:=\dot{y}(t)+A(t,\lambda)^Ty(t)
	\end{align*}
	for a.a.\ $t\in[-\tau,\tau]$ on the domain
	\begin{equation*}
		D(S(\lambda)^\ast)
		:=
		\set{u\in W^{1,\infty}[-\tau,\tau]\mid u(-\tau)\in N(P_\lambda^-(-\tau))^{\perp},u(\tau)\in R(P_\lambda^+(\tau))^{\perp}}.
	\end{equation*}
	Then the diagram
	\begin{align*}
		\xymatrix{
		W^{1,\infty}(\R)\ar[r]^-{T(\lambda)^\ast}& L^{\infty}(\R)\ar[d]^{p}\\
		D(S(\lambda)^{\ast})\ar[r]^-{S(\lambda)^\ast}\ar[u]_{i_{\lambda}^{\ast}}& L^{\infty}[-\tau,\tau]
		}
	\end{align*}
	commutes, where $i_{\lambda}^{\ast}$ is defined according to \eqref{diagram-i}. As above, $S(\lambda)^{\ast}$ is Fredholm of index $0$ and $i_{\lambda}(N(S(\lambda)^{\ast}))=N(T(\lambda)^{\ast})$ with $\dim N(S(\lambda)^{\ast})=\dim N(T(\lambda)^{\ast})$.

	(VI) \underline{Claim}: \emph{$N(S(\lambda)^{\ast})\cap R(S(\lambda))=\{0\}$}.\\
	If $g\in R(S(\lambda))$ with preimage $\phi\in D(S(\lambda)^\ast)$ and $u\in N(S(\lambda)^{\ast})$, then
\begin{align*}
&\int_{-\tau}^{\tau}\sprod{u(s),g(s)}\d s=\int_{-\tau}^{\tau}\sprod{u(s),\dot{\phi}(s)-A(t,\lambda)\phi(s)}\d s\\
&=\int_{-\tau}^{\tau}\sprod{u(s),\dot{\phi}(s)}-\sprod{u(s),A(t,\lambda)\phi(s)}\d s\\
&=\int_{-\tau}^{\tau}\sprod{u(s),\dot{\phi}(s)}+\sprod{-u(s)A(t,\lambda)^T,\phi(s)}\d s\\
&=\int_{-\tau}^{\tau}\sprod{u(s),\dot{\phi}(s)}+\sprod{\dot{u}(s),\phi(s)}\d s=\int_{-\tau}^{\tau}\frac{\d\sprod{u(s),\phi(s)}}{\d s}\d s\\
&=\sprod{u(\tau),\phi(\tau)}-\sprod{u(-\tau),\phi(-\tau)}=0,
\end{align*}
	where the last equality follows from $u(\tau)\perp \phi(\tau)$ and $u(-\tau)\perp \phi(-\tau)$, which shows that $N(S(\lambda)^{\ast})\cap R(S(\lambda))=\{0\}$. Thanks to this result and $\ind S(\lambda)=0$ together with $\dim N(S(\lambda))=\dim N(S(\lambda)^{\ast})$, we conclude $R(S(\lambda))\oplus N(S(\lambda)^{\ast})=L^{\infty}[-\tau,\tau]$.

	(VII) Repeating the arguments from Claim~III, one has $R(S(\lambda))+ W= L^{\infty}[-\tau,\tau]$, where $W:=N(S(\lambda_1)^\ast)+\ldots+N(S(\lambda_n)^\ast)$ and the parameters $\lambda_1,\ldots,\lambda_n\in [a,b]$ are the same as in \eqref{transversal-V}.
	Due to $p(N(T(\lambda)^\ast))=N(S(\lambda)^\ast)$ we conclude $p(V)=W$ with a subspace $V$ as in \eqref{transversal-V}. Thus the vector bundles
	\begin{align*}
		E(T,V)&:=\set{(\lambda,x)\in [a,b]\tm W^{1,\infty}(\R)\mid T(\lambda)x\in V},\\
		E(S,W)&:=\set{(\lambda,x)\in [a,b]\tm D(S(\lambda))\mid S(\lambda)x\in W}
	\end{align*}
	are well-defined and the following diagram commutes:
\begin{align}\label{commdiag1}
\begin{split}
\xymatrix{
E(T,V)_{\lambda}\ar[r]^-{T(\lambda)}& V\ar[d]^p\\
E(S,W)_{\lambda}\ar[u]_{(i_E)_{\lambda}}\ar[r]^-{S(\lambda)}& W,
}
\end{split}
\end{align}
where $(i_E)_{\lambda}(x)=i_{\lambda}(x)$ and $i_{\lambda}$ is as in \eqref{diagram-i}.

	(VIII) \underline{Claim}: $\sigma(T,[a,b])=\sigma(S,[a,b])$.\\
	We observe that $\dim W\leq\dim V$ and $i_E:E(S,W)\to E(T,V)$ is an injective bundle morphism. Then $E^0:=i_E(E(S,W))$ is a subbundle of $E(T,V)$, and since $\Lambda=[a,b]$ is compact, there is a complementary bundle $E^1$ to $E^0$, i.e., $E(T,V)=E^0\oplus E^1$. We decompose $V=W_0\oplus W_1$, where $W_0:=\{\chi_{[-\tau,\tau]}u\mid\, u\in V\,\}$ and $W_1:=\{\chi_{\R\setminus[-\tau,\tau]}u\mid u\in V\,\}$. It follows from the construction of $V$ and $W$ that $p|_{W_0}:W_0\to W$ is an isomorphism, and hence $\dim W_0 =\dim W$. Now, given $\lambda\in[a,b]$ taking account the above splittings, the diagram \eqref{commdiag1} has the following form:
\begin{align*}
\begin{split}
\xymatrix{
E^0_{\lambda}\oplus E^1_{\lambda}\ar[r]^-{T(\lambda)}& W_0\oplus W_1\ar[d]^p\\
E(S,W)_{\lambda}\ar[u]_{(i_E)_{\lambda}}\ar[r]^-{S(\lambda)}& W
}
\end{split}
\end{align*}
and $T(\lambda): E^0_{\lambda}\oplus E^1_{\lambda}\to W_0\oplus W_1$ can be written as operator matrix
	\begin{equation*}
		T(\lambda)
		=
		\begin{pmatrix}
			T(\lambda)_{11}&T(\lambda)_{12}\\
			T(\lambda)_{21}&T(\lambda)_{22}
		\end{pmatrix}.
	\end{equation*}
	We prove that both mappings $T(\lambda)_{12}:E^1_{\lambda}\to W_0$ and $T(\lambda)_{21}:E^0_{\lambda}\to W_1$ are trivial. Indeed, take $u\in E^0_{\lambda}$. Since there exists a $v\in E(S,W)_{\lambda}$ with $(i_E)_{\lambda}v=u$, it follows that $T(\lambda)u$ admits the property $(T(\lambda)u)(t)=0$ for all $t\in\R\setminus[-\tau,\tau]$, which yields $T(\lambda)u\in W_0$. But $T(\lambda)u=(T(\lambda)_{11}u,T(\lambda)_{21}u)$, and therefore $T(\lambda)_{21}$ is trivial.

As for $T(\lambda)_{12}$, take $w\in R(T(\lambda)_{12})\subseteq W_0$. Then there is $u\in E^1_\lambda\subset W^{1,\infty}(\R)$ such that $T(\lambda)_{12} u=w$. Since $T(\lambda)_{12}u\in W_0$, it follows that $\dot{u}(t)-A(t,\lambda)u(t)=0$ for all $t\in(-\infty,-\tau)$ and for $t\in(\tau,\infty)$, which particularly shows that
$u(-\tau)\in N(P_\lambda^-(-\tau))$ and $u(\tau)\in R(P_\lambda^+(\tau))$. Hence, there exists a $v\in E(S,W)_\lambda$ such that $(i_E)_\lambda(v)=u$, which implies that $u\in E^0_{\lambda}$. Thus, $u\in E^0_{\lambda}\cap E^1_{\lambda}=\{0\}$, and hence $w=T(\lambda)_{12}0=0$, establishing $T(\lambda)_{12}\equiv 0$.

Thus we have proved that $T(\lambda): E(T,V)_{\lambda}\to V$ allows the decomposition:
\begin{equation}\label{decomp-S}
T(\lambda)=T(\lambda)_{11}\oplus T(\lambda)_{22}: E^0_{\lambda}\oplus E^1_{\lambda}\to W_0\oplus W_1=V.
\end{equation}
	Now, we have
	\begin{itemize}
		\item $N(T(\lambda)_{22})=\{0\}$ since $N(T(\lambda))=(i_E)_{\lambda}\bigl(N(S(\lambda))\bigr)\subseteq E^0_{\lambda}$,
		\item $\dim W=\dim E(S,W)_{\lambda}$ because $\ind S(\lambda)=0$ and $E(S,W)_{\lambda}=S(\lambda)^{-1}(W)$,
		\item $\dim W_0\oplus W_1=\dim E^0_{\lambda}\oplus E^1_{\lambda}$ since $\ind T(\lambda)=0$,
		\item $\dim E^0_{\lambda}=\dim (i_E)_{\lambda}(E(S,V))_{\lambda}=\dim E(S,V)_{\lambda}$ because $(i_E)_{\lambda}$ is injective.
	\end{itemize}
	Thus, $\dim E^0_{\lambda}=\dim W_0<\infty$, $\dim E^1_{\lambda}=\dim W_1<\infty$, and $T(\lambda)_{11}: E^0_{\lambda}\to W_0$ and $T(\lambda)_{22}:E^1_{\lambda}\to W_1$ are Fredholm of index $0$. Moreover, from $N(T(\lambda)_{22})=\{0\}$ results that $T(\lambda)_{22}$ is an isomorphism and by means of \eqref{decomp-S} and \lref{lemparity} and \ref{lemparity2}, we obtain
\begin{align}\label{calculations-I}
\begin{split}
\sigma(T,[a,b])&=\sigma(T\circ\hat T,[a,b])
=
\sigma((T_{11}\oplus T_{22})\circ (\hat T_1\oplus \hat T_2),[a,b])
\\&=\sigma(T_{11}\circ\hat T_1,[a,b])\cdot\sigma(T_{22}\circ\hat T_2,[a,b])\\
&=\sigma(T_{11}\circ\hat T_1,[a,b])\cdot 1
=
\sigma(T_{11}\circ\hat T_1,[a,b]),
\end{split}
\end{align}
where $\hat T: [a,b]\tm V\to E(T,V)$, $\hat T_1: [a,b]\tm W_0\to E(T_{11},W_0)$ and $\hat T_2: [a,b]\tm W_1\to E(T_{22},W_1)$ are arbitrary bundle trivializations. If we consider the commutative diagram
	\begin{equation*}
		\xymatrix{
		E^0_{\lambda}\ar[r]^-{T(\lambda)_{11}}& W_0\ar[d]^{p|_{W_0}}_{\cong}\\
		E(\tilde{T},W)_{\lambda}\ar[u]^{(i_E)_{\lambda}}_{\cong}\ar[r]^-{S(\lambda)}& W,
		}
	\end{equation*}
	then again \lref{lemparity} and \ref{lemparity2} imply that
\begin{align}\label{calculations-II}
\sigma(T_{11}\circ\hat T_1,[a,b])=\sigma(S\circ\hat T_0,[a,b])=\sigma(S,[a,b]),
\end{align}
where also $\hat T_0: [a,b]\tm W\to E(S,W)$ is an arbitrary bundle trivialization. Finally, taking into account \eqref{calculations-I} and \eqref{calculations-II}, we derive the claimed equality.

	(IX) \underline{Claim}: \emph{$\sigma(S,[a,b])=\sigma(Q,[a,b])$ for the operator
	\begin{equation*}
		Q(\lambda):D(Q(\lambda))\subset W^{1,\infty}[-\tau,\tau]\to L^{\infty}[-\tau,\tau],\quad [Q(\lambda)u](t)=\dot{u}(t)
	\end{equation*}
	defined on the domain
	\begin{align}\label{Qdomains}
		D(Q(\lambda))=\set{u\in W^{1,\infty}[-\tau,\tau]\mid u(-\tau)\in N(P_\lambda^-(0)),\, u(\tau)\in R(P_\lambda^+(0))}.
	\end{align}}
	We abbreviate $\Phi_\lambda(t):=\Phi_\lambda(t,0)$, $S_\lambda:=S(\lambda)$ and similarly for further paths. With
	\begin{align*}
		M(\lambda)&\in GL(W^{1,\infty}[-\tau,\tau]),&
		[M(\lambda) u](t)=\Phi_\lambda(t)u(t)\fall t\in [-\tau,\tau],
	\end{align*}
	we observe that $M_\lambda^{-1}S_\lambda M_\lambda$ are Fredholm of index $0$ (cf.~(IV.5)) and defined on
	\begin{align*}
&D(M_\lambda^{-1} S_\lambda M_\lambda)=\{M_\lambda^{-1}u\in W^{1,\infty}[-\tau,\tau]: u\in D(S_\lambda)\,\}\\
&=\{M_\lambda^{-1}u\in W^{1,\infty}[-\tau,\tau] : u(-\tau)\in N(P_\lambda^-(-\tau)),\, u(\tau)\in R(P_\lambda^+(\tau))\,\}\\
&=\{v\in W^{1,\infty}[-\tau,\tau] : (M_\lambda v)(-\tau)\in N(P_\lambda^-(-\tau)),\, (M_\lambda v)(\tau)\in R(P_\lambda^+(\tau))\,\}\\
&=\set{v\in W^{1,\infty}[-\tau,\tau]:\, v(-\tau)\in N(P_\lambda^-(0)), v(\tau)\in R(P_\lambda^+(0))},
	\end{align*}
	where we used $\Phi_\lambda(t)P_\lambda^\pm(0)\phi_\lambda(t)^{-1}=P_{\lambda}^{\pm}(t)$ for $t\in\R_\pm$ in the last equality (see \eqref{ed0}). Moreover, for $u\in D(M_\lambda^{-1} S_\lambda M_\lambda)$ one has the identity
	\begin{align*}
		[M_\lambda^{-1} S_\lambda M_\lambda u](t)
		&\equiv
		\Phi_\lambda(t)^{-1}(\dot{\Phi}_\lambda(t)u(t)+\Phi_\lambda(t)\dot{u}(t)-A(t,\lambda)\Phi_\lambda(t)u(t))\\
		&\equiv
		\dot{u}(t)+\Phi_\lambda(t)^{-1}(\dot{\Phi}_\lambda(t)-A(t,\lambda)\Phi_\lambda(t))u(t)
		\equiv
		\dot{u}(t)
	\end{align*}
	a.e.\ on $\R$, i.e.\ $M_\lambda^{-1} S_\lambda M_\lambda=Q(\lambda)$. Hence, we obtain from \lref{lemparity}(c) that
	\begin{align*}
		\sigma(S,[a,b])
		&=
		\sigma(M^{-1},[a,b])\cdot\sigma(S,[a,b])\cdot\sigma(M,[a,b])\\
		&=\sigma(M^{-1}S M,[a,b])=\sigma(Q,[a,b]).
	\end{align*}

	(X) It is not hard to see that $L^{\infty}[-\tau,\tau]=Y_0\oplus Y_1$, where $Y_0$ is the $d$-dimensional space of constant $\R^d$-valued functions and
	\begin{align*}
		Y_1=\left\{u\in L^{\infty}[-\tau,\tau] : \int^{\tau}_{-\tau}u(s)\d s=0\,\right\}.
	\end{align*}
What is more, let us observe that $Y_0$ is transversal to the image of $Q$, i.e.,
	\begin{equation}\label{transversal-II}
		R(Q(\lambda))+Y_0=L^{\infty}[-\tau,\tau].
	\end{equation}
	Indeed, let $u\in L^{\infty}[-\tau,\tau]$. If we define $c,v:[-\tau,\tau]\to\R^d$ as
	\begin{align*}
		c(t)&:\equiv\frac{1}{2\tau}\int_{-\tau}^{\tau}u(s)\d s,&
		v(t)&:=\int^{t}_{-\tau}\left(u(s)-c(s)\right)\d s,
	\end{align*}
	then $v$ belongs to $D(Q(\lambda))$, defined in \eqref{Qdomains} for all $\lambda\in [a,b]$, and
	\begin{equation*}
		[Q(\lambda)v](t)+c(t)=u(t)-c(t)+c(t)=u(t)\fall t\in[-\tau,\tau]
	\end{equation*}
	proves \eqref{transversal-II}. Thus $E(Q,Y_0)$ is well-defined with the fibers
	\begin{align*}
		&
		E(Q,Y_0)_\lambda
		=
		Q(\lambda)^{-1}Y_0=\{u\in D(Q(\lambda)): \dot{u}(t)\equiv\text{constant}\}\\
		&=
		\set{u_\xi^\eta:\,u_\xi^\eta(t)=\tfrac{1}{2}\left(1+\tfrac{t}{\tau}\right)\eta+\tfrac{1}{2}\left(1-\tfrac{t}{\tau}\right)\xi\text{ for }\xi\in N(P_\lambda^-(0)),\,\eta\in R(P_\lambda^+(0))}.
	\end{align*}
Moreover, $Q(\lambda)$ acts on the fibers $E(Q,Y_0)_\lambda$ into $Y_0$ by $Q(\lambda)u_\xi^\eta=\frac{1}{2\tau}(\eta-\xi)$. Now we are in a position to introduce the following commutative diagram:
\begin{align*}
\xymatrix{
E(Q,Y_0)_{\lambda}\ar[r]^{Q(\lambda)}\ar[d]_{e_{\lambda}}^{\cong}& Y_0\ar[d]^m_{\cong}\\
N(P_\lambda^-(0))\oplus R(P_\lambda^+(0)\ar[r]^-{\hat L_{\lambda}}&\R^d,
}
\end{align*}
	where the mappings $e_{\lambda}$, $\hat L_{\lambda}$ and $m$ are defined as follows
	\begin{align*}
		e_\lambda:E(Q,Y_0)_{\lambda}&\to N(P_\lambda^-(0))\oplus R(P_\lambda^+(0),&
		e_{\lambda}(u)&:=(u(-\tau),u(\tau))\\
		m:Y_0&\to \R^d,&
		m(u)&:=2\tau u\\
		\hat L_{\lambda}:N(P_\lambda^-(0))\oplus R(P_\lambda^+(0)&\to \R^d,&
		\hat L_{\lambda}(u,v)&=v-u.
	\end{align*}
	Hence, in view of \lref{lemparity} and \ref{lemparity2}, we deduce the desired conclusion
	\begin{equation}
		\sigma(Q,[a,b])=\sigma(Q\circ\hat T,[a,b])=\sigma(\hat L\circ\hat T^{\hat L},[a,b]),
		\label{star2}
	\end{equation}
	where $\hat T: [a,b]\tm Y_0\to E(Q,Y_0)$ is an arbitrary bundle trivialization with second bundle trivialization $\hat T^{\hat L}:[a,b]\tm (\R^{d-m^+}\tm\set{0}\oplus\set{0}\tm\R^{m^-})\to \mathrm{N}(P^-(0))\oplus\mathrm{R}(P^+(0))$,
	\begin{equation*}
		\hat T^{\hat L}(\lambda,x,y)
		:=
		\intoo{\sum_{i=1}^{d-m^+} x_i \xi^+_i(\lambda), \sum_{j=1}^{m^-} y_j\xi^-_j(\lambda)}
	\end{equation*}
	and the functions $\xi^+_i$, $\xi^-_j$ from \pref{propetagamma} with $m^+=m^-$.

	(XI) \underline{Claim}: $\sigma(\hat L\circ\hat T^{\hat L},[a,b])=\sgn E(a)\cdot \sgn E(b)$.\\
	From \lref{lemparity} it follows that
	$
		\sigma(\hat L\circ\hat T^{\hat L},[a,b])
		=
		\sgn\det(\hat L_a\circ\hat T^{\hat L}_a)\cdot \sgn \det(\hat L_b\circ\hat T^{\hat L}_b),
	$
	where using \pref{propetagamma} one has for $\lambda\in\set{a,b}$ that
\begin{align*}
	&
	\det(\hat L_\lambda\circ\hat T^{\hat L}_\lambda)
	=
	\det(-\xi^+_1(\lambda),\ldots,-\xi_{d-m^+}^+(\lambda),\xi^-_1(\lambda),\ldots,\xi_{m^-}^-(\lambda))\\
	&=
	(-1)^{d-m^+}\det (\xi^+_1(\lambda),\ldots,\xi_{d-m^+}^+(\lambda),\xi^-_1(\lambda),\ldots,\xi_{m^-}^-(\lambda))
	=
	(-1)^{d-m^+} E(\lambda)
\end{align*}
and thus
	$
		\sigma(\hat L\circ\hat T^{\hat L},[a,b])
		=
		(\sgn (-1)^{d-m^+})^2\sgn E(a)\sgn E(b)
		=
		\sgn E(a)\sgn E(b).
	$

	(XII) Finally, taking into account the previous steps, one obtains
	\begin{eqnarray*}
		\sigma(T,[a,b])
		& \stackrel{\text{(VIII)}}{=} &
		\sigma(S,[a,b])
		\stackrel{\text{(IX)}}{=}
		\sigma(Q,[a,b])
		\stackrel{\eqref{star2}}{=}
		\sigma(\hat L\circ\hat T^{\hat L},[a,b])\\
		& \stackrel{\text{(XI)}}{=} &
		\sgn E(a)\cdot \sgn E(b),
	\end{eqnarray*}
	which completes the proof of (a).

	(b) The arguments from the above proof of part (a) carry over to the present situation with the spaces $W^{1,\infty}(\R)$ and $L^\infty(\R)$ replaced by $W_0^{1,\infty}(\R)$ resp.\ $L_0^\infty(\R)$, provided the respective statements (b) of \tref{thmgprop}, \ref{thmadmin} and \ref{thmfred} are employed.
\end{proof}

We conclude this section with a local version of \tref{thmmain} involving the parity index $\sigma(T,\lambda^\ast)$ (see App.~\ref{appA}). Thereto, we say that an Evans function $E$ for \eqref{var} \emph{changes sign} at a parameter value $\lambda^\ast\in\Lambda^\circ$, if there exists a neighborhood $\Lambda_0\subseteq\tilde\Lambda$ of $\lambda^\ast$ so that $E(\lambda)\neq 0$ for all $\lambda\in\Lambda_0\setminus\set{\lambda^\ast}$ and $\lim_{\eps\searrow 0}\sgn E(\lambda^\ast-\eps)\cdot \sgn E(\lambda^\ast+\eps)=-1$ hold. Then the Intermediate Value Theorem yields $E(\lambda^\ast)=0$. Moreover, for smooth Evans functions a sign change occurs, if $\lambda^\ast$ is a zero of odd order.

\begin{corollary}[Evans function and parity index]\label{corEv}
	If an Evans function $E$ of \eqref{var} changes sign at $\lambda^\ast$, then $\sigma(T,\lambda^\ast)=-1$.
\end{corollary}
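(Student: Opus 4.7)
The plan is to derive this as a direct specialization of \tref{thmmain} to a sufficiently small interval centered at $\lambda^\ast$, using only the definition of the parity index and the hypothesis on sign changes of $E$.

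First I would unpack the sign-change hypothesis. By definition, there exists a neighborhood $\Lambda_0$ of $\lambda^\ast$ on which $E$ does not vanish off $\lambda^\ast$, and the product $\sgn E(\lambda^\ast-\eps)\cdot\sgn E(\lambda^\ast+\eps)$ equals $-1$ for all sufficiently small $\eps>0$. Choose $\eps>0$ small enough that both $a:=\lambda^\ast-\eps$ and $b:=\lambda^\ast+\eps$ lie in $\Lambda_0\cap\bar B_{\rho_0}(\lambda^\ast)$, so that the roughness result \lref{lemrough} applies on $[a,b]$ and \pref{propetagamma} furnishes an Evans function there. By \pref{propEvans}, the nonvanishing of $E$ at $a$ and $b$ is equivalent to hyperbolicity of $\phi_a$ and $\phi_b$ on all of $\R$, hence by \tref{thmadmin} the endpoints $T(a)$ and $T(b)$ are invertible.

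Next I would apply \tref{thmmain} directly to the restricted path $T\colon[a,b]\to L(W^{1,\infty}(\R),L^\infty(\R))$ (or its $W^{1,\infty}_0/L^\infty_0$ counterpart). The theorem yields
\begin{equation*}
	\sigma(T,[a,b])=\sgn E(a)\cdot\sgn E(b)=-1,
\end{equation*}
where the second equality is the sign-change hypothesis.

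Finally, the parity index $\sigma(T,\lambda^\ast)$ as introduced in App.~\ref{appA} is exactly the common value of $\sigma(T,[\lambda^\ast-\eps,\lambda^\ast+\eps])$ for all sufficiently small $\eps>0$ with invertible endpoints; this well-definedness (independence of $\eps$) follows from the multiplicativity of the parity under concatenation of admissible subintervals together with $\sigma(T,[c,d])=1$ on any subinterval $[c,d]$ on which $T$ is pointwise invertible. Substituting our chosen $\eps$ gives $\sigma(T,\lambda^\ast)=-1$.

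The only mild subtlety is ensuring that the $\eps$ selected from the sign-change hypothesis can simultaneously be taken small enough to sit inside $\bar B_{\rho_0}(\lambda^\ast)$ so that \tref{thmmain} is applicable; this is immediate by shrinking $\eps$ further. Apart from this bookkeeping, the statement is essentially an immediate reading of \tref{thmmain} at the scale of a single sign-changing parameter, so no genuinely new analytical work is required.
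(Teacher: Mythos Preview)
Your argument is correct and follows essentially the same route as the paper: both proofs use the sign-change hypothesis to guarantee $E(\lambda^\ast\pm\eps)\neq 0$ for small $\eps$, invoke \pref{propEvans} and \tref{thmadmin} to obtain invertible endpoints, apply \tref{thmmain} on $[\lambda^\ast-\eps,\lambda^\ast+\eps]$, and then read off the parity index from its definition as the limit over shrinking intervals. Your version is slightly more explicit about the bookkeeping (staying inside $\bar B_{\rho_0}(\lambda^\ast)$ and the well-definedness of the parity index), but there is no substantive difference.
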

\begin{proof}
	By assumption there is a neighborhood $\Lambda_0$ of $\lambda^\ast$ such that $E(\lambda)\neq 0$ on $\Lambda_0\setminus\set{\lambda^\ast}$ and hence \tref{thmadmin} combined with \pref{propEvans} yield that $T(\lambda)$ are nonsingular for $\lambda\neq\lambda^\ast$. Therefore, \tref{thmmain} implies
	\begin{equation*}
		\sigma(T,[\lambda^\ast-\eps,\lambda^\ast+\eps])
		=
		\sgn\sigma(T,[\lambda^\ast-\eps,\lambda^\ast+\eps])
		=
		\sgn E(\lambda^\ast-\eps)\cdot\sgn E(\lambda^\ast+\eps)
	\end{equation*}
	and passing to the limit $\eps\searrow 0$ yields the claim.
\end{proof}

\begin{remark}[multiplicities]\label{remmult}
	With the closed operators
	\begin{equation*}
		T(\lambda):D(T(\lambda))\subseteq L^\infty(\R)\to L^\infty(\R)\fall\lambda\in\Lambda
	\end{equation*}
	on the domains $D(T(\lambda)):=W^{1,\infty}(\R)$ (or with the spaces $L_0^\infty(\R)$ and $W_0^{1,\infty}(\R)$, resp.) it is due to \cite{minh:94} that the dichotomy spectrum $\Sigma(\lambda)$ of \eqref{var} is related to the spectrum $\sigma(T(\lambda))\subseteq\C$ of the operator $T(\lambda)$ via $\Sigma(\lambda)=\sigma(T(\lambda))\cap\R$.

	Hypothesis $(H_2)$ and $E(\lambda^\ast)=0$ yield the inclusion $0\in\Sigma(\lambda^\ast)$ and we denote the spectral interval containing $0$ as \emph{critical}. More precisely, $0$ is an eigenvalue of $T(\lambda^\ast)$ with \emph{geometric multiplicity} $\mu_g:=\dim N(T(\lambda^\ast))=\dim\bigl(R(P_{\lambda^\ast}^+(0))\cap N(P_{\lambda^\ast}^-(0))\bigr)$. In relation to the algebraic multiplicity $\mu$ of the critical spectral interval it is $\mu_g\leq\mu\leq d$.
\end{remark}
\section{Bifurcation in Carath{\'e}odory equations}
\label{sec4}
We now establish Evans functions as a tool to detect bifurcations of bounded entire solutions to Carath{\'e}odory equations \eqref{cde}. An entire solution $\phi^\ast=\phi_{\lambda^\ast}$ of $(C_{\lambda^\ast})$ is said to \emph{bifurcate} at the parameter $\lambda^\ast\in\tilde\Lambda$, if there exists a sequence $(\lambda_n)_{n\in\N}$ in $\tilde\Lambda$ converging to $\lambda^\ast$ such that each $(C_{\lambda_n})$ has a bounded entire solution $\psi_n\neq\phi_{\lambda_n}$ with
\begin{equation*}
	\lim_{n\to\infty}\sup_{t\in\R}\abs{\psi_n(t)-\phi^\ast(t)}=0.
\end{equation*}
In other words, $\phi^\ast$ is an accumulation point of bounded entire solutions not contained in the family $(\phi_\lambda)_{\lambda\in\tilde\Lambda}$. Given $\Lambda\subseteq\tilde\Lambda$, the subset $\fB_\Lambda$ of all parameters $\lambda\in\Lambda$ such that there occurs a bifurcation at $(\phi_\lambda,\lambda)$ is denoted as \emph{set of bifurcation values} for \eqref{cde}.

\begin{theorem}[necessary bifurcation condition]
	\label{thmnobif}
	Let $\lambda^\ast\in\tilde\Lambda$ and suppose that $(H_0$--$H_1)$ hold. If $\lambda^\ast\in \fB_{\tilde\Lambda}$, i.e.\ the bounded, permanent, entire solution $\phi^\ast$ of $(C_{\lambda^\ast})$ bifurcates at $\lambda^\ast$, then $\phi^\ast$ is not hyperbolic on $\R$, i.e.\ $0\in\Sigma(\lambda^\ast)$.
\end{theorem}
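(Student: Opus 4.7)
I would prove this by contrapositive: assume $\phi^\ast$ is hyperbolic on $\R$ and show that no bifurcation can occur at $\lambda^\ast$. Under this assumption, \tref{thmadmin}(a) yields $D_1G(0,\lambda^\ast)\in GL(W^{1,\infty}(\R),L^\infty(\R))$. Combined with the continuity of $G$ and of $D_1G$ stated in \tref{thmgprop}(a), together with the trivial identity $G(0,\lambda)\equiv 0$ on $\tilde\Lambda$, the classical Implicit Function Theorem (for operators between Banach spaces, with parameter in a metric space) supplies neighborhoods $U_0\subseteq W^{1,\infty}(\R)$ of $0$ and $\Lambda_0\subseteq\tilde\Lambda$ of $\lambda^\ast$ such that the zeros of $G$ in $U_0\tm\Lambda_0$ are exactly graphed by a continuous curve through $(0,\lambda^\ast)$; since $G(0,\lambda)\equiv 0$ already provides such a curve, uniqueness forces the trivial curve to be the only solution branch in $U_0\tm\Lambda_0$.

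Next I would translate this local uniqueness back into a statement about \eqref{cde}. Suppose towards a contradiction that $\lambda^\ast\in\fB_{\tilde\Lambda}$, so there exist $\lambda_n\to\lambda^\ast$ and bounded entire solutions $\psi_n\neq\phi_{\lambda_n}$ of $(C_{\lambda_n})$ with $\sup_{t\in\R}\abs{\psi_n(t)-\phi^\ast(t)}\to 0$. Setting $\delta_n:=\psi_n-\phi_{\lambda_n}$, Hypothesis~$(H_1)$ and the triangle inequality yield $\norm{\delta_n}_\infty\to 0$, so eventually $\norm{\delta_n}_\infty<\bar\rho$; hence \tref{thmchar}(a) applies and gives $\delta_n\in W^{1,\infty}(\R)$ with $G(\delta_n,\lambda_n)=0$ and $\delta_n\neq 0$.

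The slightly delicate point, and the one I expect to need the most care, is upgrading $\norm{\delta_n}_\infty\to 0$ to $\norm{\delta_n}_{1,\infty}\to 0$ — IFT uniqueness is formulated in $W^{1,\infty}(\R)$, not merely in $L^\infty(\R)$. This follows from the defining ODE $\dot\delta_n(t)=f(t,\delta_n(t)+\phi_{\lambda_n}(t),\lambda_n)-f(t,\phi_{\lambda_n}(t),\lambda_n)$, which via the Mean Value Theorem (as in the proof of \tref{thmchar}) gives
\begin{equation*}
	\abs{\dot\delta_n(t)}
	\leq
	\norm{\delta_n}_\infty\,\esssup_{t\in\R}\sup_{x\in B}\abs{D_2f(t,x,\lambda_n)}
	\quad\text{for a.a.\ }t\in\R
\end{equation*}
on a common bounded set $B\subseteq\Omega$ (available from $(H_1)$); the essential-sup bound on $D_2f$ supplied by $(H_0)$ is then locally uniform in $\lambda$ by the continuity assumption in $(H_0)$, yielding $\norm{\dot\delta_n}_\infty\to 0$ and therefore $\norm{\delta_n}_{1,\infty}\to 0$.

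With this in hand, eventually $(\delta_n,\lambda_n)\in U_0\tm\Lambda_0$, so the local uniqueness from IFT forces $\delta_n\equiv 0$ for $n$ large, contradicting $\psi_n\neq\phi_{\lambda_n}$. Thus no bifurcation can occur when $\phi^\ast$ is hyperbolic, which is precisely the contrapositive of the claim: $\lambda^\ast\in\fB_{\tilde\Lambda}$ implies $0\in\Sigma(\lambda^\ast)$.
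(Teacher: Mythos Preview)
Your proof is correct and follows exactly the approach the paper indicates: the paper's own proof is a single sentence stating that this is a consequence of the Implicit Function Theorem, argued as in \cite[Thm.~3.8]{poetzsche:11}. You have supplied precisely the details that this reference stands in for --- invoking \tref{thmadmin} for invertibility, \tref{thmgprop} for the required regularity, \tref{thmchar} to pass between \eqref{cde} and \eqref{abs}, and the Mean Value estimate to upgrade $L^\infty$- to $W^{1,\infty}$-convergence so that the IFT uniqueness neighborhood actually captures the $\delta_n$.
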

\begin{proof}
	This consequence of the Implicit Function Theorem \cite[pp.~7--8, Thm.~I.1.1]{kielhoefer:12} is established akin to \cite[Thm.~3.8]{poetzsche:11} in the context of ordinary differential equations.
\end{proof}

Our appproach requires Hypotheses $(H_0$--$H_2)$ to hold with a parameter space $\tilde\Lambda\subseteq\R$ containing a neighborhood of $\lambda^\ast$. Then \pref{propetagamma} guarantees the existence of an Evans function $E:[\lambda^{\ast}-\bar\eps,\lambda^{\ast}+\bar\eps]\to\R$ for the variation equation \eqref{var}.

A combination of \tref{thmgprop} with \pref{propEvans} yields the implications
\begin{equation*}
	\lambda^\ast\in\fB_\Lambda
	\quad\Rightarrow\quad
	0\in\Sigma(\lambda^\ast)
	\quad\Leftrightarrow\quad
	E(\lambda^\ast)=0,
\end{equation*}
while the converse holds when $E$ has an actual sign change at $\lambda^\ast$. Note that we impose no further assumption and thus extend the sufficient bifurcation conditions from \cite{poetzsche:12}, which were limited to critical spectral intervals containing a geometrically simple eigenvalue~$0$. For the sake of a compact notation in the next result we introduce the \emph{prescribed branch}
\begin{equation*}
	\cS:=\{(\phi_{\lambda},\lambda)\in W^{1,\infty}(\mathbb{R},\Omega)\times \tilde\Lambda\mid \phi_{\lambda} \text{ is as in }(H_1)\}
\end{equation*}
of solutions to \eqref{cde} and its subset $\cS_{\mathrm{H}}:=\set{(\phi_\lambda,\lambda)\in \cS\mid\phi_\lambda\text{ is hyperbolic on }\R}$.
\begin{theorem}[bifurcation of bounded solutions homoclinic to $\cS$]
	\label{thmsingle}
	Let $(H_0$--$H_2)$ hold with Morse indices $m^+=m^-$. If an Evans function $E:[\lambda^{\ast}-\bar\eps,\lambda^{\ast}+\bar\eps]\to\R$ for \eqref{var} changes sign at $\lambda^\ast$, then the entire solution $\phi^\ast$ to $(C_{\lambda^\ast})$ bifurcates at $\lambda^\ast$ in the following sense:
	\begin{enumerate}
		\item[(a)] There exists a $\delta_0>0$ such that for each $\delta\in(0,\delta_0)$ there is a connected component
	\begin{equation*}
		\cC
		\subseteq
		\set{(\phi,\lambda)\in W^{1,\infty}(\R)\tm\Omega\mid\phi:\R\to\Omega\text{ solves }\eqref{cde}}
		\setminus \cS_{\mathrm{H}}
	\end{equation*}
	containing the pair $(\phi^\ast,\lambda^\ast)$, which joins the complement $\cS\setminus\cS_{\mathrm{H}}$ with the set $\bigl\{(x,\lambda)\in W^{1,\infty}(\R)\tm\Lambda\mid\norm{x-\phi_\lambda}_{1,\infty}=\delta\bigr\}$ (see Fig.~\ref{figsingle}).

		\item[(b)] For each $(\phi,\lambda)\in\cC$ the bounded entire solution $\phi:\R\to\Omega$ is homoclinic to $\phi_\lambda$.
	\end{enumerate}
\end{theorem}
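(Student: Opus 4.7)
The plan is to translate the sign change of $E$ at $\lambda^\ast$ into a nontrivial parity index via \cref{corEv}, apply the abstract global bifurcation theorem announced in App.~\ref{appB} to the operator $G$ acting between $W_0^{1,\infty}(\R)$ and $L_0^\infty(\R)$, and then pass back to bounded entire solutions of \eqref{cde} via \tref{thmchar}(b).

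First I would fix $\Lambda:=[\lambda^\ast-\bar\eps,\lambda^\ast+\bar\eps]$, possibly shrinking $\bar\eps>0$ so that $\Lambda\subseteq\bar B_{\rho_0}(\lambda^\ast)$ (with $\rho_0$ from \lref{lemrough}) and $E(\lambda)\neq 0$ on $\Lambda\setminus\set{\lambda^\ast}$. On this $\Lambda$ the path $T(\lambda):=D_1G(0,\lambda)\in L(W_0^{1,\infty}(\R),L_0^\infty(\R))$ is continuous by \tref{thmgprop}(b), of Fredholm index $m^--m^+=0$ by \lref{lemrough} and \tref{thmfred}(b), and invertible on $\Lambda\setminus\set{\lambda^\ast}$ by \tref{thmadmin}(b) combined with \pref{propEvans}. \cref{corEv} then yields $\sigma(T,\lambda^\ast)=-1$, so that $\lambda^\ast$ is an isolated singular point of the Fredholm path $T$ with nontrivial parity index.

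Next, the abstract global bifurcation theorem in App.~\ref{appB} applies, since $G(0,\lambda)\equiv 0$ on $\tilde\Lambda$: it produces a $\delta_0>0$ such that for every $\delta\in(0,\delta_0)$ there is a connected set
\begin{equation*}
	\tilde\cC\subseteq\set{(\psi,\lambda)\in W_0^{1,\infty}(\R)\tm\Lambda:\,G(\psi,\lambda)=0,\,\norm{\psi}_{1,\infty}\leq\delta}
\end{equation*}
containing $(0,\lambda^\ast)$ and meeting the sphere $\norm{\psi}_{1,\infty}=\delta$. The Rabinowitz-type alternative of $\tilde\cC$ returning to another trivial zero $(0,\tilde\lambda)$ is excluded because $T(\lambda)$ is invertible on $\Lambda\setminus\set{\lambda^\ast}$, so the Implicit Function Theorem rules out further local bifurcation points.

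Finally, the continuous affine shift $\Phi:(\psi,\lambda)\mapsto(\psi+\phi_\lambda,\lambda)$ from $W_0^{1,\infty}(\R)\tm\Lambda$ into $W^{1,\infty}(\R)\tm\Lambda$ (continuity via $(H_1)$ and $\phi_\lambda\in W^{1,\infty}(\R,\Omega)$ from \tref{thmchar}) transports $\tilde\cC$ to $\cC:=\Phi(\tilde\cC)$, which is connected, contains $(\phi^\ast,\lambda^\ast)\in\cS\setminus\cS_{\mathrm{H}}$ (the latter by $(H_2)$, $E(\lambda^\ast)=0$, and \pref{propEvans}), and reaches the $\delta$-sphere around $\cS$ in the $W^{1,\infty}$-norm. \tref{thmchar}(b) identifies each $(\phi,\lambda)\in\cC$ with a bounded entire solution of \eqref{cde} in $\cB_{\bar\rho}(\phi_\lambda)$ homoclinic to $\phi_\lambda$, proving (b), while the absence of further zeros of $E$ in $\Lambda$ ensures that $\cC$ cannot intersect $\cS$ at points other than $(\phi^\ast,\lambda^\ast)$, so $\cC\cap\cS_{\mathrm{H}}=\emptyset$ and (a) follows. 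The main delicate point is ensuring that App.~\ref{appB} actually produces a continuum reaching the $\delta$-sphere (rather than collapsing back to the singular fiber); this is guaranteed precisely by the nontriviality of the parity index $\sigma(T,\lambda^\ast)=-1$ established in the first step.
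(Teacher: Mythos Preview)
Your proposal is correct and follows essentially the same route as the paper: verify the hypotheses of the abstract local bifurcation theorem in App.~\ref{appB} for $G:W_0^{1,\infty}(\R)\tm\Lambda\to L_0^\infty(\R)$ via \tref{thmgprop}(b) and \tref{thmfred}(b), obtain $\sigma(D_1G(0,\cdot),\lambda^\ast)=-1$ from \cref{corEv}, and then translate the resulting continuum back to \eqref{cde} through \tref{thmchar}(b). The only cosmetic difference is that you spell out the affine shift $(\psi,\lambda)\mapsto(\psi+\phi_\lambda,\lambda)$ and a Rabinowitz-type alternative explicitly, whereas the paper invokes \tref{thmbifurcation} directly (whose conclusion already guarantees that the continuum reaches the $\delta$-sphere, so no alternative needs to be ruled out).
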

\begin{SCfigure}[2]
	\includegraphics[scale=0.5]{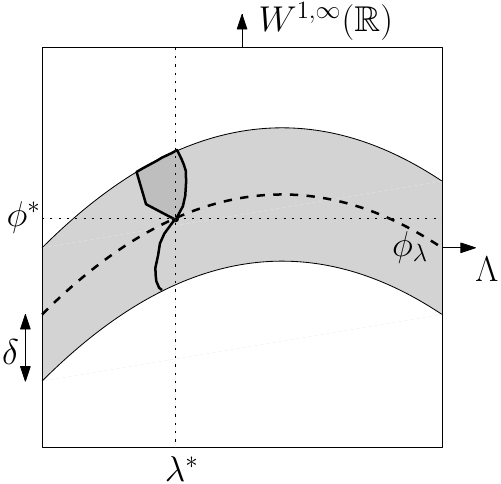}
	\caption{To \tref{thmsingle}: At some pair $(\phi^\ast,\lambda^\ast)\in W^{1,\infty}(\R)\tm\Lambda$ a continuum of solutions homoclinic to $\phi_\lambda$ (dark grey shaded) bifurcates from the prescribed branch $\cS$ (dashed line) connecting it to the tube $\bigl\{(x,\lambda)\in W^{1,\infty}(\R)\tm\Lambda\mid\norm{x-\phi_\lambda}_{1,\infty}=\delta\bigr\}$ (light grey shaded) for sufficiently small $\delta>0$}
	\label{figsingle}
\end{SCfigure}

Using the examples below, it is not hard to see that a sign change of an Evans function is a sufficient, but not a necessary condition for bifurcation in the sense of \tref{thmsingle}. Furthermore, the fact that $E(\lambda^\ast)=0$ and \cref{corEvans}(c) imply $d>1$, i.e.\ \tref{thmsingle} does not apply to scalar Carath{\'e}odory equations~\eqref{cde} (where $d=1$).
\begin{proof}
	Above all, $\phi_\lambda\in W^{1,\infty}(\R,\Omega)$ for each $\lambda\in\tilde\Lambda$ holds due to \tref{thmchar}.
	
	We apply the abstract bifurcation criterion in \tref{thmbifurcation} to \eqref{abs} with the parametri\-zed operator $G$ from \tref{thmgprop} and the Banach spaces $X=W_0^{1,\infty}(\R)$, $Y=L_0^\infty(\R)$. Indeed, because of \tref{thmgprop}(b) the mapping $G:U\to L_0^\infty(\R)$ is well-defined and continuous on a product $U:=\bigl\{x\in W_0^{1,\infty}(\R):\,\norm{x}_\infty<\rho\bigr\}^\circ\tm\Lambda$. Furthermore, the partial derivative $D_1G:U\to L(W_0^{1,\infty}(\R),L_0^\infty(\R))$ exists as continuous function, $G(0,\lambda)\equiv 0$ holds on $\Lambda$, while \tref{thmfred}(b) shows that $\lambda\mapsto D_1G(0,\lambda)$ defines a path of Fredholm operators with index $0$. Since an Evans function $E$ is assumed to change sign at $\lambda^\ast$, we readily derive from \cref{corEv} that $\sigma(D_1G(0,\cdot),\lambda^\ast)=-1$ holds. Consequently, \tref{thmbifurcation} (with $\lambda_0=\lambda^\ast$) shows that $(0,\lambda^\ast)$ is a bifurcation point of \eqref{abs} and that there exists a $\delta_0>0$ and a connected set of nonzero solutions to \eqref{abs} in $W_0^{1,\infty}(\R)$ emanating from $(0,\lambda^\ast)$ to the surface $\norm{x}_{1,\infty}=\delta$ for $\delta\in(0,\delta_0)$. Note in this context that \tref{thmnobif} guarantees the equivalence $D_1G(0,\lambda)\in GL(W_0^{1,\infty}(\R),L_0^\infty(\R))\Leftrightarrow(\phi_\lambda,\lambda)\in\cS_H$.

	Given this, \tref{thmchar}(b) implies that the bounded entire solution $\phi^\ast$ to $(C_{\lambda^\ast})$ bifurcates at $\lambda=\lambda^\ast$ into a set of solutions to \eqref{cde} being homoclinic to $\phi_\lambda$. In particular, the statements on the continuum of bifurcating bounded entire solutions to \eqref{cde} holds.
\end{proof}

The following example illustrates the generality and applicability of \tref{thmsingle}.
\begin{example}\label{example-9}
	Let $n\in\N$, $\alpha>0$ and $\tilde\Lambda=\R$. Consider a Carath{\'e}odory equation \eqref{cde} in $\Omega=\R^{2n}$ with right-hand side $f:\R\tm\R^{2n}\tm\R\to\R^{2n}$,
	\begin{align*}
		f(t,x,\lambda)
		&:=
		\begin{pmatrix}
			a(t)I_n & 0\\
			C(\lambda) & -a(t)I_n
		\end{pmatrix}x
		+F(t,x,\lambda),&
		a(t)&:=
		\begin{cases}
			-\alpha,&t\geq 0,\\
			\alpha,&t<0
		\end{cases}
	\end{align*}
	with a continuous function $C:\R\to\R^{n\tm n}$ and a nonlinearity $F:\R\tm\R^d\tm\R\to\R^d$ such that the resulting right-hand side $f$ might fulfill both Hypothesis $(H_0)$ and
	\begin{align}
		F(t,0,\lambda)&\equiv 0,&
		D_2F(t,0,\lambda)&\equiv 0\on\R\tm\R.
		\label{cal01}
	\end{align}
	Consequently, \eqref{cde} has the trivial solution for all parameters $\lambda\in \R$, i.e., we can choose the continuous branch $\phi_\lambda(t):\equiv 0$ on $\R$ and assumption $(H_1)$ holds. For each $\gamma\in\R$ the shifted variation equation \eqref{var} along the trivial solution becomes
	\begin{equation}
		\dot x
		=
		\begin{pmatrix}
			(a(t)-\gamma)I_n & 0_n\\
			C(\lambda) & (-a(t)-\gamma)I_n
		\end{pmatrix}x.
		\label{varshift}
	\end{equation}
	We first determine the dichotomy spectrum $\Sigma(\lambda)$ of \eqref{var}. Thereto, note that \eqref{varshift} is piecewise autonomous which on the respective semiaxes $\R_+$ and $\R_-$ becomes
	\begin{align*}
		\dot x&=
		\begin{pmatrix}
			(-\alpha-\gamma)I_n & 0_n\\
			C(\lambda) & (\alpha-\gamma)I_n
		\end{pmatrix}x,&
		\dot x&=
		\begin{pmatrix}
			(\alpha-\gamma)I_n & 0_n\\
			C(\lambda) & (-\alpha-\gamma)I_n
		\end{pmatrix}x.
	\end{align*}
	In case $\gamma<-\alpha$ it is $-\alpha-\gamma>0$, $\alpha-\gamma>0$ and thus \eqref{varshift} has an exponential dichotomy with projector $P(t)\equiv I_{2n}$ on $\R$. In case $\gamma>\alpha$ it holds $-\alpha-\gamma<0$, $\alpha-\gamma<0$ and \eqref{varshift} is exponentially dichotomic with projector $P(t)\equiv 0_{2n}$ on $\R$. In conclusion, this implies that $\Sigma(\lambda)\subseteq[-\alpha,\alpha]$. For $\gamma\in\set{-\alpha,\alpha}$ one sees that \eqref{varshift} has nontrivial bounded entire solutions, which yields $\set{-\alpha,\alpha}\subseteq\Sigma(\lambda)$. In the remaining situation $\gamma\in(-\alpha,\alpha)$ the equation \eqref{varshift} has an exponential dichotomy on the semiaxis $\R_+$ with projector
	\begin{align*}
		P_\lambda^+(t)
		&\equiv\begin{pmatrix}
			I_n & 0_n\\
			-\tfrac{1}{2\alpha}C(\lambda) & 0_n
		\end{pmatrix},&
		R(P_\lambda^+(0))
		&=
		\set{\binom{\xi}{\eta}\in\R^{2n}:\,\eta=-\tfrac{1}{2\alpha}C(\lambda)\xi}
	\end{align*}
	(and Morse index $m^+=n$), as well as on the the semiaxis $\R_-$ with projector
	\begin{align*}
		P_\lambda^-(t)
		&\equiv\begin{pmatrix}
			0_n & 0_n\\
			-\tfrac{1}{2\alpha}C(\lambda) & I_n
		\end{pmatrix},&
		N(P_\lambda^-(0))
		&=
		\set{\binom{\xi}{\eta}\in\R^{2n}:\,\eta=\tfrac{1}{2\alpha}C(\lambda)\xi}
	\end{align*}
	(and Morse index $m^-=n$). Therefore, $m^-=m^+$ and according to \pref{propetagamma} a globally defined Evans function for the variation equation \eqref{var} can be constructed as
	\begin{align*}
		E:\R&\to\R,&
		E(\lambda)
		&=
		\det\begin{pmatrix}
			I_n & I_n\\
			-\tfrac{1}{2\alpha}C(\lambda) & \tfrac{1}{2\alpha}C(\lambda)
		\end{pmatrix}
		=
		\frac{\det C(\lambda)}{\alpha^n},
	\end{align*}
	which results in the following two observations:

	(1) The equation \eqref{varshift} having a nontrivial bounded entire solution is equivalent to
	\begin{equation*}
		R(P_\lambda^+(0))\cap N(P_\lambda^-(0))\neq\set{0}
		\quad\Leftrightarrow\quad
		N(C(\lambda))\neq\set{0}
		\quad\Leftrightarrow\quad
		E(\lambda)\neq 0,
	\end{equation*}
	which leads to the dichotomy spectrum
	\begin{equation*}
		\Sigma(\lambda)
		=
		\begin{cases}
			[-\alpha,\alpha],&\lambda\in E^{-1}(0),\\
			\set{-\alpha}\cup\set{\alpha},&\lambda\not\in E^{-1}(0)
		\end{cases}
	\end{equation*}
	of the variation equation \eqref{var} (cf.~\cref{corEvans}). In detail, if an Evans function $E$ has an isolated zero $\lambda^\ast\in\R$, then the critical spectral interval $\Sigma(\lambda^\ast)=[-\alpha,\alpha]$ of algebraic multiplicity $2n$ splits into two spectral intervals $\set{-\alpha},\set{\alpha}$ (in fact singletons) of algebraic multiplicity $n$ for $\lambda\neq\lambda^\ast$. Here, the critical spectral interval $\Sigma(\lambda^\ast)$ consists of eigenvalues to the operator $T(\lambda^\ast)$ from \rref{remmult} with geometric multiplicity $\dim N(C(\lambda^\ast))$.

	(2) If $E:\R\to\R$ even changes sign at $\lambda^\ast$, then \tref{thmsingle} implies for any non\-lin\-earity $F$ satisfying \eqref{cal01} that nontrivial bounded entire solution to \eqref{cde} being homoclinic to $0$ bifurcate at $\lambda^\ast$ from the zero branch, i.e.\ $\fB_\R=\set{\lambda\in\R:\,E\text{ changes sign at }\lambda}$. Note that the bifurcation criteria from \cite{poetzsche:12} do not apply to \eqref{cde} unless $\dim N(C(0))=1$.
\end{example}

Preparing further examples we introduce a prototypical equation:
\begin{lemma}\label{lemproto}
	Let $\nu,\mu\in\R$. The general solution of the ordinary differential equation
	\begin{equation}
		\dot x
		=
		\begin{pmatrix}
			-\tanh t & 0 \\
			0 & \tanh t
		\end{pmatrix}x
		+
		\begin{pmatrix}
			0\\
			\nu x_1^2
		\end{pmatrix}
		+
		\begin{pmatrix}
			0\\
			\mu
		\end{pmatrix}
		\label{ode1}
	\end{equation}
	satisfies for all $t\in\R$ and initial values $\xi\in\R^2$ that
	\begin{equation*}
		\varphi(t;0,\xi)
		=
		\begin{pmatrix}
			\tfrac{1}{\cosh t}\xi_1\\
			\tfrac{\nu\xi_1^2}{2}\tanh t+
			\cosh t\bigl[\xi_2+(\nu\xi_1^2+2\mu)\arctan\tanh\tfrac{t}{2}\bigr]
		\end{pmatrix}.
	\end{equation*}
	Moreover, the equivalence $\varphi(\cdot;0,\xi)\in L^\infty(\R)\Leftrightarrow	\nu\xi_1^2+2\mu=0\text{ and }\xi_2=0$ holds.
\end{lemma}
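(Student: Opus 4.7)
My approach is to exploit the triangular structure of \eqref{ode1}: the first component decouples and the second becomes a scalar linear inhomogeneous equation driven by a known forcing, which can be solved by variation of constants.

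First I would solve $\dot x_1=-\tanh(t)x_1$ with $x_1(0)=\xi_1$. Since an antiderivative of $-\tanh$ is $-\log\cosh$, the unique solution is $x_1(t)=\xi_1/\cosh t$. Substituting this into the equation for $x_2$ yields
\begin{equation*}
	\dot x_2=\tanh(t)x_2+\frac{\nu\xi_1^2}{\cosh^2 t}+\mu,
\end{equation*}
whose homogeneous solutions are multiples of $\cosh t$. Writing $x_2(t)=c(t)\cosh t$ reduces matters to $\dot c(t)=\nu\xi_1^2\mathrm{sech}^3 t+\mu\,\mathrm{sech}\,t$, with $c(0)=\xi_2$.

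Next I would evaluate the two integrals using standard antiderivatives: $\int_0^t\mathrm{sech}\,s\,\d s=2\arctan\tanh\tfrac{t}{2}$, and the reduction formula $\int\mathrm{sech}^3 s\,\d s=\tfrac{1}{2}\mathrm{sech}\,s\tanh s+\tfrac{1}{2}\int\mathrm{sech}\,s\,\d s$ gives $\int_0^t\mathrm{sech}^3 s\,\d s=\tfrac{1}{2}\tanh(t)/\cosh t+\arctan\tanh\tfrac{t}{2}$. Substituting these into $c(t)=\xi_2+\int_0^t\dot c$ and multiplying by $\cosh t$ yields exactly the claimed expression
\begin{equation*}
	x_2(t)=\tfrac{\nu\xi_1^2}{2}\tanh t+\cosh t\bigl[\xi_2+(\nu\xi_1^2+2\mu)\arctan\tanh\tfrac{t}{2}\bigr].
\end{equation*}
A brief verification by differentiation could be included to double-check the algebra, but this is the main computational step and is routine.

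Finally, for the $L^\infty$-characterization, the first component $\xi_1/\cosh t$ is always bounded. For $x_2$, note that $\tanh t$ and $\tfrac{\nu\xi_1^2}{2}\tanh t$ are bounded, while $\cosh t\to\infty$ as $t\to\pm\infty$ and $\arctan\tanh\tfrac{t}{2}\to\pm\tfrac{\pi}{4}$. Thus boundedness of $x_2$ forces the bracketed factor to vanish at both $\pm\infty$, giving the two linear conditions
\begin{equation*}
	\xi_2+\tfrac{\pi}{4}(\nu\xi_1^2+2\mu)=0,\qquad
	\xi_2-\tfrac{\pi}{4}(\nu\xi_1^2+2\mu)=0,
\end{equation*}
equivalent to $\xi_2=0$ and $\nu\xi_1^2+2\mu=0$. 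Conversely, under these two conditions the bracket vanishes identically and $x_2(t)=\tfrac{\nu\xi_1^2}{2}\tanh t$ is bounded. The only non-routine point is identifying the antiderivatives of $\mathrm{sech}\,t$ and $\mathrm{sech}^3 t$ in the form that matches the stated formula, which I would handle via the standard reduction formula as above.
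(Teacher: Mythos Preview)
Your proof is correct and follows essentially the same approach as the paper: exploit the triangular structure and Variation of Constants to derive the explicit solution, then analyze the asymptotics of the bracketed factor to characterize boundedness. Your converse direction is in fact slightly cleaner than the paper's---you observe that under the two conditions the bracket vanishes identically, whereas the paper instead computes the finite limits $\lim_{t\to\pm\infty}\cosh t\bigl(\arctan\tanh\tfrac{t}{2}\mp\tfrac{\pi}{4}\bigr)=\mp\tfrac{1}{2}$ to control each semiaxis separately.
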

\begin{proof}
	Because equation \eqref{ode1} is of lower triangular form the given expression for the general solution $\varphi$ is due to the Variation of Constants Formula \cite[Thm.~2.10]{aulbach:wanner:96}. In order to identify the bounded entire solutions of \eqref{ode1} we first note the limits
	\begin{equation}
		\lim_{t\to\pm\infty}\cosh t\intoo{\arctan\tanh\tfrac{t}{2}\mp\tfrac{\pi}{4}}=\mp\tfrac{1}{2}.
		\label{ode3}
	\end{equation}
	On the one hand, the representation
	\begin{align*}
		&
		\cosh t\bigl[\xi_2+(\nu\xi_1^2+2\mu)\arctan\tanh\tfrac{t}{2}\bigr]\\
		=&
		\cosh t\bigl[(\nu\xi_1^2+2\mu)\intoo{\arctan\tanh\tfrac{t}{2}-\tfrac{\pi}{4}}\bigr]
		+
		\cosh t\bigl[\xi_2+(\nu\xi_1^2+2\mu)\tfrac{\pi}{4}\bigr]
	\end{align*}
	and \eqref{ode3} guarantee that $\sup_{t\geq 0}\abs{\varphi(t;0,\xi)}<\infty$ is equivalent to
	\begin{equation}
		0
		=
		\xi_2+(\nu\xi_1^2+2\mu)\tfrac{\pi}{4}.
		\label{ode3a}
	\end{equation}
	On the other hand,
	\begin{align*}
		&
		\cosh t\bigl[\xi_2+(\nu\xi_1^2+2\mu)\arctan\tanh\tfrac{t}{2}\bigr]\\
		=&
		\cosh t\bigl[(\nu\xi_1^2+2\mu)\intoo{\arctan\tanh\tfrac{t}{2}+\tfrac{\pi}{4}}\bigr]
		+
		\cosh t\bigl[\xi_2-(\nu\xi_1^2+2\mu)\tfrac{\pi}{4}\bigr]
	\end{align*}
	combined with \eqref{ode3} ensure that $\sup_{t\leq 0}\abs{\varphi(t;0,\xi)}<\infty$ holds if and only if
	\begin{equation}
		0
		=
		\xi_2-(\nu\xi_1^2+2\mu)\tfrac{\pi}{4}.
		\label{ode3b}
	\end{equation}
	The two relations \eqref{ode3a} and \eqref{ode3b} in turn are equivalent to $\nu\xi_1^2+2\mu=0$ and $\xi_2=0$.
\end{proof}

We proceed to an example with a nontrivial continuous branch of nontrivial bounded solutions $\phi_\lambda$. It exhibits a transcritical bifurcation, which can also be verified in terms of the degenerate fold bifurcation from \cite[Thm.~4.2]{poetzsche:12}.
\begin{figure}[ht]
	\includegraphics[scale=0.75]{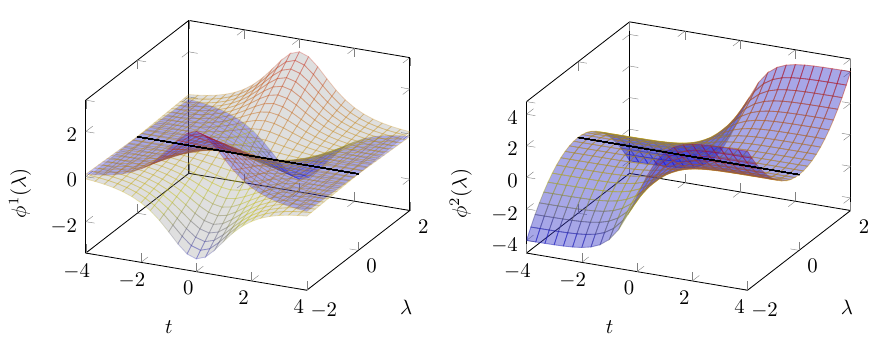}
	\caption{Bifurcation of solutions homoclinic to $\phi_\lambda$: The blue branch $\phi_\lambda^-$ bifurcates from the gray branch $\phi_\lambda=(\phi^1(\lambda),\phi^2(\lambda))$ at $\lambda^\ast=0$ and the trivial solution (black line)}
	\label{figbranches1}
\end{figure}

\begin{example}\label{ex10}
	Let $\tilde\Lambda=\R$. Consider an ordinary differential equation \eqref{cde} in $\Omega=\R^2$ with the right-hand side
	\begin{align*}
		f(t,x,\lambda)
		&:=
		\begin{pmatrix}
			-\tanh t & 0 \\
			0 & \tanh t
		\end{pmatrix}x
		+
		\begin{pmatrix}
			0\\
			x_1^2
		\end{pmatrix}
		-
		\begin{pmatrix}
			0\\
			\lambda^2
		\end{pmatrix}.
	\end{align*}
	It fits in the framework of \eqref{ode1} with $\nu=1$, $\mu=-\lambda^2$ and consequently \lref{lemproto} implies that the initial values $\xi^\pm(\lambda)=\pm\binom{\sqrt{2}\lambda}{0}$ yield two continuous branches of bounded entire solutions to \eqref{cde}. For instance, $\xi^+(\lambda)$ leads to the branch of bounded solutions
	\begin{align*}
		\phi_\lambda:\R&\to\R^2,&
		\phi_\lambda(t)
		&:=
		\lambda
		\begin{pmatrix}
			\frac{\sqrt{2}}{\cosh t}\\
			\lambda\tanh t
		\end{pmatrix}\fall\lambda\in\R.
	\end{align*}
	It intersects the branch of bounded entire solutions emanating from the initial values $\xi^-(\lambda)$ and is given by
	\begin{align*}
		\phi_\lambda^-:\R&\to\R^2,&
		\phi_\lambda^-(t)
		&:=
		\lambda
		\begin{pmatrix}
			-\frac{\sqrt{2}}{\cosh t}\\
			\lambda\tanh t
		\end{pmatrix}\fall\lambda\in\R;
	\end{align*}
	note that each $\phi_\lambda^-$ is homoclinic to $\phi_\lambda$. Hence, a branch of homoclinic solutions bifurcates from $\phi^\ast=0$ at $\lambda^\ast=0$ (see Fig.~\ref{figbranches1}). In order to confirm this scenario by means of \tref{thmsingle} we compute the partial derivative
	\begin{equation*}
		D_2f(t,x,\lambda)=\begin{pmatrix}
			-\tanh t & 0\\
			2x_1 & \tanh t
		\end{pmatrix}
	\end{equation*}
	leading to the variation equation $(V_{\lambda^\ast})$ explicitly given by
	\begin{equation*}
		\dot x
		= D_2f(t,\phi^\ast(t),\lambda^\ast)x=
		\begin{pmatrix}
			-\tanh t & 0\\
			0 & \tanh t
		\end{pmatrix}x
		\label{noed}
	\end{equation*}
	with the diagonal transition matrix
	\begin{align*}
		\Phi_{\lambda^\ast}(t,\tau)
		&=
		\begin{pmatrix}
			\tfrac{\cosh \tau}{\cosh t} & 0\\
			0 & \tfrac{\cosh t}{\cosh \tau}
		\end{pmatrix}
		\fall\tau,t\in\R.
	\end{align*}
	Therefore, the variation equation $(V_{\lambda^\ast})$ has exponential dichotomies on $\R_+$ with
	\begin{align*}
		P_{\lambda^\ast}^+(t)&\equiv
		\begin{pmatrix}
			1 & 0\\
			0 & 0
		\end{pmatrix},&
		R(P_{\lambda^\ast}^+(t))&\equiv\spann\set{e_1}
	\end{align*}
	and also on the semiaxis $\R_-$ with
	\begin{align*}
		P_{\lambda^\ast}^-(t)&\equiv
		\begin{pmatrix}
			0 & 0\\
			0 & 1
		\end{pmatrix},&
		N(P_{\lambda^\ast}^-(t))&\equiv\spann\set{e_1}.
	\end{align*}
	Whence, $(H_2)$ holds with $m^+=m^-=1$. Having established the Hypotheses $(H_0$--$H_2)$ we can compute an Evans function $E: \R\to\R$ defined on the entire real axis. Indeed, the variation equation \eqref{var} along $\phi_\lambda$ has the solutions
	\begin{equation*}
		\Phi_\lambda(t,0)\xi
		=
		\begin{pmatrix}
			\xi_1/\cosh t\\
			\sqrt{2}\lambda\tanh t\xi_1+\cosh t(2\sqrt{2}\lambda\arctan\tanh\tfrac{t}{2}\xi_1+\xi_2)
		\end{pmatrix}
	\end{equation*}
	and consequently \eqref{R-N} induces the dynamical characterizations:
	\begin{align*}
		\set{\xi\in\R^2:\,\sup_{0\leq t}\abs{\Phi_\lambda(t,0)\xi}<\infty}
		&=
		\set{\xi\in\R^2:\,\xi_2=-\sqrt{2}\lambda\tfrac{\pi}{2}\xi_1},\\
		\set{\xi\in\R^2:\,\sup_{t\leq 0}\abs{\Phi_\lambda(t,0)\xi}<\infty}
		&=
		\set{\xi\in\R^2:\,\xi_2=\sqrt{2}\lambda\tfrac{\pi}{2}\xi_1}.
	\end{align*}
	In conclusion, with \pref{propetagamma} it follows that
	\begin{equation*}
		E(\lambda)
		=
		\det\begin{pmatrix}
			1 & 1\\
			-\sqrt{2}\lambda\tfrac{\pi}{2} & \sqrt{2}\lambda\tfrac{\pi}{2}
		\end{pmatrix}
		=
		\sqrt{2}\pi\lambda
		\fall\lambda\in\R
	\end{equation*}
	is an Evans function. First, due to $E^{-1}(0)=\set{0}$ the splitting of the critical spectral interval guaranteed by \cref{corEvans} is illustrated (even upper semicontinuously) as
	\begin{equation*}
		\Sigma(\lambda)
		=
		\begin{cases}
			[-1,1],&\lambda=0,\\
			\set{-1}\cup\set{1},&\lambda\neq 0;
		\end{cases}
	\end{equation*}
	the critical spectral interval $[-1,1]$ of algebraic multiplicity $2$ splits into the singletons $\set{-1},\set{1}$ having algebraic multiplicity $1$. Second, because $E$ changes sign at $\lambda^\ast=0$, by \tref{thmsingle} there is a bifurcation of bounded entire solutions $\phi_\lambda^-$ to \eqref{cde} being homoclinic to $\phi_\lambda$. As demonstrated explicitly above, both branches $\phi_\lambda$ and $\phi_\lambda^-$ exist for all parameters; one has $\fB_{\R}=\set{0}$.
\end{example}
\section{Outlook and comparison}
\label{sec5}
The basic Fredholm theory from Sect.~\ref{sec2}, as well as the proof of \tref{thmmain} extends to further paths $T:[a,b]\to F_0(X,Y)$ of differential operators
\begin{equation}
	[T(\lambda)y](t):=\dot y(t)-A(t,\lambda)y(t)\text{ with coefficients }
	A(t,\lambda)\in\R^{d\tm d}
	\label{nopath}
\end{equation}
between appropriate pairs $(X,Y)$ of function spaces beyond those suitable for Carath{\'e}o\-dory equations \eqref{cde}. This adds the parity (and its applications) to the toolbox available for further areas, as discussed in e.g.\ \cite{alexander:gardner:jones:90, kapitula:promislow:13, sandstede:02} addressing PDEs or \cite{secchi:stuart:03} tackling nonautonomous Hamiltonian systems.

\paragraph{Ordinary differential equations}
Our central results literally carry over to nonautonomous ordinary differential equations \eqref{cde}, provided Hypothesis $(H_0)$ holds with continuous functions $f$ and $D_2f$ in their full set of variables. Then the functional analytical machinery presented here applies with $W^{1,\infty}(\R)$ and $L^\infty(\R)$ replaced by the corresponding subspaces of bounded continuous resp.\ continuously differentiable functions $BC^1(\R)$ and $BC(\R)$ resp.\ their subspaces of functions vanishing at $\pm\infty$; this is met in \eref{ex10}.

\paragraph{Difference equations}
Similarly, linearizing difference equations $x_{t+1}=f_t(x_t,\lambda)$ near branches of bounded entire solutions gives rise to operators
\begin{equation*}
	[T(\lambda)y]_t:=y_{t+1}-A_t(\lambda)y_t\text{ with }A_t(\lambda)\in\R^{d\tm d}.
\end{equation*}
Under corresponding dichotomy assumptions (cf.~\cite[pp.~101ff, Sect.~6.2]{anagnostopoulou:poetzsche:rasmussen:22}), $T(\lambda)$ can be shown to be an index $0$ Fredholm endomorphism on the spaces $\ell^\infty(\Z)$ of bounded sequences and the limit zero sequences $\ell_0(\Z)$. This allows to introduce an Evens function in this framework with the corresponding ramifications.

\paragraph{Parity and multiplicity}
The parity unifies several approaches extending the algebraic multiplicity $\bar\mu$ of critical parameters $\lambda_0$ from compact operators to paths of index $0$ Fredholm operators with invertible endpoints in terms of the relation
\begin{equation*}
	\sigma(T,[\lambda_0-\eps,\lambda_0+\eps])=(-1)^{\bar\mu}
	\quad\text{for sufficiently small }\eps>0.
\end{equation*}
Among them are the crossing number \cite[pp.~203ff]{kielhoefer:12} or the multiplicities due to Iz{\'e} \cite{ize:76}, Magnus \cite{magnus:76} or finally Esquinas \& L{\'o}pez-G{\'omez} \cite{esquinas:lopez:88}; their relation was studied in \cite[Thm.~1.4]{esquinas:88} and \cite{FiPejsachowiczV}. Indeed, the parity is invariant under Lyapunov--Schmidt reduction (cf.\ \cite{FiPejsachowiczV}). In our situation of paths having the form \eqref{nopath}, \tref{thmmain} connects the Evans function with these multiplicities via the relation $\sigma(T,\lambda_0)=(-1)^{\bar\mu}$ resp.\
\begin{equation*}
	(-1)^{\bar\mu}
	=
	\sgn E(\lambda_0-\eps)\sgn E(\lambda_0+\eps)
	\quad\text{for sufficiently small }\eps>0.
\end{equation*}
In addition, the product representation \cite[Prop.~5.6]{benevieri:furi:00} of the parity in terms of the sign of oriented Fredholm operators implies
\begin{equation*}
	\sgn T(\lambda_0-\eps)\sgn T(\lambda_0+\eps)
	=
	\sgn E(\lambda_0-\eps)\sgn E(\lambda_0+\eps)
	\text{ for sufficiently small }\eps>0.
\end{equation*}

We finally point out a useful extension of results stemming from the abstract set-up of App.~\ref{appB}. Indeed, \cite[Thm.~6.18]{FiPejsachowiczV} establish that if a path $T:[a,b]\to F_0(X,Y)$ is differentiable in $\lambda_0\in(a,b)$ and satisfies the splitting
\begin{equation}\label{split}
	\dot T(\lambda_0)N(T(\lambda_0))\oplus R(T(\lambda_0))=Y,
\end{equation}
then $\lambda_0$ is an isolated singular point of $T$ and for sufficiently small $\varepsilon>0$ one has
\begin{equation}\label{dim-ker}
	\sigma(T,[\lambda_0-\varepsilon,\lambda_0+\varepsilon])=(-1)^{\dim N(T(\lambda_0))}.
\end{equation}
Observe that under \eqref{split} and \eqref{dim-ker} one has the equivalence
\begin{equation*}
	\sigma(T,[\lambda_0-\varepsilon,\lambda_0+\varepsilon])=-1
	\quad\Leftrightarrow\quad
	\dim N(T(\lambda_0)) \text{ is odd.}
\end{equation*}
In contrast, based on our approach a path $T$ neither has to be differentiable nor must satisfy \eqref{split} in $\lambda_0$. Beyond that $\sigma(T,[\lambda_0-\varepsilon,\lambda_0+\varepsilon])=-1$ may hold for even dimensions of $N(T(\lambda_0))$. The above \eref{example-9} illustrates that such a situation can actually occur.
\begin{example}
	Let $X=W^{1,\infty}(\R)$ and $Y=L^{\infty}(\R)$. In the framework of \eref{example-9} the path $T:[a,b]\to L(W^{1,\infty}(\R),L^{\infty}(\R))$ discussed in \tref{thmmain} becomes explicitly
	\begin{equation*}
		[T(\lambda)y](t)
		=
		\dot{y}(t)-\begin{pmatrix}
			a(t)I_n & 0\\
			C(\lambda) & -a(t)I_n
		\end{pmatrix}y(t)\quad\text{for a.a.\ }t\in\R
	\end{equation*}
	and $y\in W^{1,\infty}(\R)$. Even if the coefficient function $C:[a,b]\to\R^{n\tm n}$ is merely assumed to be continuous, an Evans function for \eqref{var} can be constructed. Yet, unless $C$ is differentiable in some $\lambda_0\in(a,b)$, the condition \eqref{split} cannot be employed. Beyond that, even for $C$ being differentiable at $\lambda_0$, but $\dot C(\lambda_0)\not\in GL(\R^n,\R^n)$, then also the path $T$ is differentiable in $\lambda_0$ with $\dot T(\lambda_0)\in L(W^{1,\infty}(\R),L^{\infty}(\R))$ given by
	\begin{equation*}
		[\dot T(\lambda_0)y](t)
		=
		-\begin{pmatrix}
			0 & 0\\
			\dot C(\lambda_0) & 0
		\end{pmatrix}y(t) \quad\text{for a.a.\ }t\in\R.
	\end{equation*}
	But it is clear that this derivative does not fulfill a splitting \eqref{split}.
\end{example}
\appendix
\renewcommand{\theequation}{\Alph{section}.\arabic{equation}}
\renewcommand{\thesection}{\Alph{section}}
\section*{Appendices}
Assume that $X,Y$ are real Banach spaces. For the convenience of the reader we briefly review the construction of the parity for a path of Fredholm operators and provide its properties, as well as applications in bifurcation theory from \cite{Fitz91, FiPejsachowiczIII, FiPejsachowiczIV, FiPejsachowiczV,FitPejRab}.
\section{The parity}
\label{appA}
We denote a continuous function $T: [a,b]\to L(X,Y)$ as a \emph{path}. It is said to have \emph{invertible endpoints}, if moreover $T(a),T(b)\in GL(X,Y)$ holds. Referring to \cite{fitzPeja86}, for each path $T: [a,b]\to F_0(X,Y)$ there exists a path $P:[a,b]\to GL(Y,X)$ having the property that $P(\lambda)T(\lambda)-I_X\in L(X,X)$ is a compact operator for every $\lambda\in[a,b]$; such a function $P$ is called \emph{parametrix} for $T$. In case $T: [a,b]\to F_0(X,Y)$ has invertible endpoints, then its \emph{parity} on $[a,b]$ is defined as
\begin{equation*}
	\sigma(T,[a,b]):=\deg_{LS}(P(a)T(a))\cdot \deg_{LS}(P(b)T(b))\in\set{-1,1},
\end{equation*}
where the symbol $\deg_{LS}$ denotes the Leray--Schauder degree (cf.\ e.g.\ \cite[pp.~199ff]{kielhoefer:12}).

We understand paths $T,S:[a,b]\to F_0(X,Y)$ with invertible endpoints as \emph{homotopic}, if there exists a continuous map $h:[0,1]\tm [a,b]\to F_0(X,Y)$ with the properties
\begin{itemize}
	\item $h(0,\lambda)=T(\lambda)$ and $h(1,\lambda)=S(\lambda)$ for all $\lambda\in[a,b]$,

	\item $h(t,\cdot):[a,b]\to F_0(X,Y)$ has invertible endpoints for all $t\in(0,1)$.
\end{itemize}
\begin{lemma}[properties of the parity]\label{lemparity}
	Let $E,F$ and $Z$ be further real Banach spaces and assume $T: [a,b]\to F_0(X,Y)$ is a path with invertible endpoints.
	\begin{enumerate}
		\item[(a)] Homotopy invariance \emph{\cite[p.~54, (6.11)]{FiPejsachowiczIII}}: If $T$ is homotopic to a further path\\ $S:[a,b]\to F_0(X,Y)$ with invertible endpoints, then $\sigma(T,[a,b])=\sigma(S,[a,b])$.

		\item[(b)] Multiplicativity under partition of $[a,b]$ \emph{\cite[p.~53, (6.9)]{FiPejsachowiczIII}}: If $T(c)\in GL(X,Y)$ for some $c\in(a,b)$, then $\sigma(T,[a,b])=\sigma(T,[a,c])\cdot \sigma(T,[c,b])$.

		\item[(c)] Multiplicativity under composition \emph{\cite[p.~54, (6.10)]{FiPejsachowiczIII}}: If $S:[a,b]\to F_0(Y,Z)$ is a path with invertible endpoints, then\footnote{we abbreviate $(TS)(\lambda):=T(\lambda)S(\lambda)$ for all $\lambda\in[a,b]$} $\sigma(ST,[a,b])=\sigma(S,[a,b])\cdot\sigma(T,[a,b])$.

		\item[(d)] Multiplicativity under direct sum \emph{\cite[p.~54, (6.12)]{FiPejsachowiczIII}}: If $S:[a,b]\to F_0(E,F)$ is a path with invertible endpoints, then
		$
			\sigma(T\oplus S,[a,b])=\sigma(T,[a,b])\cdot \sigma(S,[a,b]).
		$

		\item[(e)] Finite-dimensional case \emph{\cite[p.~53, (6.8)]{FiPejsachowiczIII}}: If $X=Y$ and $\dim X<\infty$, then
		$
			\sigma(T,[a,b])=\sgn \det T(a)\cdot\sgn \det T(b).
		$

		\item[(f)] Triviality property \emph{\cite[p.~52, Thm.~6.4]{FiPejsachowiczIII}}: $\sigma(T,[a,b])=1$ if and only if the path $T:[a,b]\to F_0(X,Y)$ can be deformed in $F_0(X,Y)$ to a path in $GL(X,Y)$ through a homotopy with invertible endpoints. In particular, if $T(\lambda)\in GL(X,Y)$ for all $\lambda\in [a,b]$, then $\sigma(T,[a,b])=1$.
	\end{enumerate}
\end{lemma}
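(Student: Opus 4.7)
The plan is to work directly from the definition
\begin{equation*}
	\sigma(T,[a,b]) = \deg_{LS}(P(a)T(a)) \cdot \deg_{LS}(P(b)T(b))
\end{equation*}
together with standard properties of the Leray--Schauder degree. The first preliminary, implicit in the statement, is independence of the chosen parametrix: if $P_1, P_2$ are two parametrices for $T$, then on $[a,b]$ the path $\lambda \mapsto P_1(\lambda) P_2(\lambda)^{-1}$ is a continuous path in $GL(Y,Y)$ whose values are compact perturbations of $I_Y$. By the product formula and homotopy invariance of $\deg_{LS}$, the factor $\deg_{LS}(P_1(\lambda) P_2(\lambda)^{-1})$ is constant in $\lambda \in [a,b]$, so the extra signs cancel between $\lambda = a$ and $\lambda = b$. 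This legitimizes making any convenient choice of parametrix in each subsequent step.

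Properties (a)--(e) then reduce to routine manipulations. For (a), lifting to a jointly continuous family of parametrices $P_t(\lambda)$ for the homotopy $h(t,\lambda)$ (existence follows by a standard partition-of-unity argument on $[0,1] \times [a,b]$), the map $t \mapsto \deg_{LS}(P_t(a)h(t,a)) \cdot \deg_{LS}(P_t(b)h(t,b))$ is continuous into $\{-1,+1\}$, hence constant. For (b), a single parametrix $P$ on $[a,b]$ and the elementary identity $\deg_{LS}(P(c)T(c))^2 = 1$ allow inserting $1 = \deg_{LS}(P(c)T(c))^2$ between the two endpoint factors. For (c), if $P$ is a parametrix for $T$ and $Q$ a parametrix for $S$, then $PQ$ is a parametrix for $ST$, and multiplicativity of $\deg_{LS}$ under composition of $I + \text{compact}$ operators gives the factorization. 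For (d), a block-diagonal parametrix $P \oplus Q$ works together with the direct-sum formula $\deg_{LS}(A \oplus B) = \deg_{LS}(A) \deg_{LS}(B)$. For (e), in finite dimensions $F_0(X,X) = L(X,X)$ and one can take $P(\lambda) \equiv I_X$; the Leray--Schauder degree of a linear operator on a finite-dimensional space coincides with $\sgn \det$, yielding the claim.

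The main obstacle is property (f). The easy direction is immediate: if $T$ can be deformed through an admissible homotopy to a path $\tilde T:[a,b] \to GL(X,Y)$, then (a) gives $\sigma(T,[a,b]) = \sigma(\tilde T,[a,b])$, and the latter equals $1$ since $P := \tilde T^{-1}$ is a parametrix with $P\tilde T \equiv I_X$ and $\deg_{LS}(I_X) = 1$. The nontrivial converse, that $\sigma(T,[a,b]) = 1$ implies the existence of such an admissible deformation, reflects the fact that $\pi_0$ of the space of paths in $F_0(X,Y)$ with invertible endpoints is classified precisely by the parity, and its proof (following \cite{FiPejsachowiczIII}) proceeds in two stages: first a local reduction via Lyapunov--Schmidt that pushes the problem into $GL(\R^n, \R^n)$ for some $n$, and then a connectedness argument exploiting that $GL(\R^n) \cup \{A : \det A < 0\}$ has exactly two connected components, matched by the sign carried by $\deg_{LS}$. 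This would be the step I expect to require the most care; for (a)--(e) I would give self-contained arguments, while for (f) I would follow the construction of the cited reference and merely verify its hypotheses are met by the present formulation.
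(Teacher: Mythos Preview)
The paper does not prove this lemma at all: it is stated as a list of properties, each item carrying a citation to \cite{FiPejsachowiczIII}, and no proof environment follows. So there is nothing to compare your argument against in the paper itself.

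Your sketch is broadly sound and goes well beyond what the paper does. A few points deserve care if you flesh it out. For (c), the line ``$PQ$ is a parametrix for $ST$ and multiplicativity of $\deg_{LS}$ gives the factorization'' hides a subtlety: $P(\lambda)T(\lambda)$ acts on $X$ while $Q(\lambda)S(\lambda)$ acts on $Y$, so you cannot literally invoke a product formula for two compact perturbations of the identity on the same space. At the invertible endpoints one writes $P Q S T = (PT)\bigl(T^{-1}(QS)T\bigr)$ and then argues that $\deg_{LS}\bigl(T^{-1}(QS)T\bigr)=\deg_{LS}(QS)$ via conjugation invariance; this is standard but should be made explicit. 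For (a), the existence of a \emph{jointly} continuous parametrix over the rectangle $[0,1]\times[a,b]$ is precisely the nontrivial input from \cite{fitzPeja86}; a partition-of-unity patching of local parametrices works, but the local construction itself (via a Kuiper-type argument or a finite-rank correction) is where the content lies. For (f) you correctly identify the converse as the substantial direction and defer to the source, which is appropriate here since the paper does the same.
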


For actual parity computations the following result is crucial:
\begin{lemma}[reduction property of the parity, \cite{FiPejsachowiczIII, FitPejRab}]\label{lemparity2}
	Let $T:[a,b]\to F_0(X,Y)$ be a path with invertible endpoints. If $V$ is a finite-dimensional subspace of $Y$ which satisfies $T(\lambda)X+V=Y$ for all $\lambda\in [a,b]$, then
	$
		E(T,V):=\set{(\lambda,x)\in [a,b]\tm X\mid T(\lambda)x\in V}
	$
	has the following properties:
	\begin{enumerate}
		\item[(a)] $E(T,V)$ is a subbundle of $[a,b]\tm X$ with fibers $E(T,V)_{\lambda}=T(\lambda)^{-1}V$. In particular, $\dim T(\lambda)^{-1}V=\dim V$ for all $\lambda\in [a,b]$.

		\item[(b)] For every bundle trivialization $\hat T:[a,b]\tm V\to E(T,V)$ one has
		\begin{align*}
			T\circ\hat T_{a},T\circ\hat T_{b}\in GL(V,V)\text{ and }\sigma(T,[a,b])=\sigma(T\circ\hat T,[a,b]),
		\end{align*}
		where $T\circ\hat T:[a,b]\to L(V,V)$ is given by $(T\circ\hat T)_{\lambda}(v):=T(\lambda)\hat T(\lambda,v)$.
	\end{enumerate}
\end{lemma}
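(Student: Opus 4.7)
The plan is to treat the two assertions in order. For (a), I first verify the dimension formula $\dim T(\lambda)^{-1}V=\dim V$. Since $T(\lambda)\in F_0(X,Y)$, the index-zero identity gives $\dim N(T(\lambda))=\codim R(T(\lambda))$, while the transversality $T(\lambda)X+V=Y$ implies that the composition $V\hookrightarrow Y\to Y/R(T(\lambda))$ is onto, yielding $\codim R(T(\lambda))=\dim V-\dim(V\cap R(T(\lambda)))$. The restriction $T(\lambda)\colon T(\lambda)^{-1}V\to V$ then has kernel $N(T(\lambda))$ and image $V\cap R(T(\lambda))$, so $\dim T(\lambda)^{-1}V=\codim R(T(\lambda))+\dim V-\codim R(T(\lambda))=\dim V$. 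To produce local trivializations, near a fixed $\lambda_0$ I consider the continuous family $\Phi_\lambda:=q\circ T(\lambda)\colon X\to Y/V$ with $q\colon Y\to Y/V$ the quotient; by transversality each $\Phi_\lambda$ is surjective with $\ker\Phi_\lambda=E(T,V)_\lambda$. Choosing a closed complement $N_0$ of $E(T,V)_{\lambda_0}$ in $X$ makes $\Phi_{\lambda_0}|_{N_0}\in GL(N_0,Y/V)$; by openness of $GL$ and continuity of $\lambda\mapsto\Phi_\lambda$ this persists on a neighborhood $U$ of $\lambda_0$, so $X=E(T,V)_\lambda\oplus N_0$ on $U$, and projection along $N_0$ furnishes a continuous family of isomorphisms $E(T,V)_{\lambda_0}\to E(T,V)_\lambda$.

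For (b), the invertibility at the endpoints is immediate: $T(a)\in GL(X,Y)$ restricts to an isomorphism $T(a)^{-1}V\to V$ which, composed with the fiber isomorphism $\hat T_a\colon V\to T(a)^{-1}V$, yields $(T\circ\hat T)_a\in GL(V)$; likewise at $b$. For the parity identity I pick a complementary subbundle $E^c$ of $E(T,V)$ inside the trivial bundle $[a,b]\times X$, available since $[a,b]$ is contractible (cf.\ \cite[p.~30, Cor.~4.8]{husemoller:74}), together with a trivialization $\hat T^c\colon[a,b]\times X_0\to E^c$, and I fix a closed complement $V^c$ of $V$ in $Y$. With respect to the splittings $X=E(T,V)_\lambda\oplus E^c_\lambda$ and $Y=V\oplus V^c$ the operator $T(\lambda)$ is block upper triangular because $T(\lambda)E(T,V)_\lambda\subseteq V$; its diagonal block $T_{22}(\lambda):=\pi_{V^c}\circ T(\lambda)|_{E^c_\lambda}\colon E^c_\lambda\to V^c$ has kernel $E^c_\lambda\cap E(T,V)_\lambda=\set{0}$ and range $\pi_{V^c}(R(T(\lambda)))=V^c$ (again by transversality), hence is a bounded bijection and therefore an isomorphism by the Open Mapping Theorem.

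Transferring to the fixed Banach space $V\oplus X_0$ via the composite trivialization $\psi_\lambda:=\hat T_\lambda\oplus\hat T^c_\lambda\colon V\oplus X_0\to X$, the path $T\psi$ becomes block upper triangular with diagonal entries $(T\circ\hat T)_\lambda\in L(V,V)$ and $\tilde T_{22}(\lambda):=T_{22}(\lambda)\hat T^c_\lambda\in GL(X_0,V^c)$. Since $T(a)\psi_a$ and $T(b)\psi_b$ are invertible, both $(T\circ\hat T)_a,(T\circ\hat T)_b$ and $\tilde T_{22}(a),\tilde T_{22}(b)$ are invertible, and the linear deformation erasing the upper-right block produces a homotopy with invertible endpoints between $T\psi$ and the block diagonal $(T\circ\hat T)\oplus\tilde T_{22}$. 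Homotopy invariance and direct-sum multiplicativity from \lref{lemparity} yield $\sigma(T\psi,[a,b])=\sigma(T\circ\hat T,[a,b])\cdot\sigma(\tilde T_{22},[a,b])=\sigma(T\circ\hat T,[a,b])$, the second factor equalling $1$ by triviality (\lref{lemparity}(f)); composition multiplicativity together with $\psi(\lambda)\in GL$ then gives $\sigma(T\psi,[a,b])=\sigma(T,[a,b])$, and chaining the equalities delivers the claim.

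The main obstacle is organizing (b) around a consistent pair of Banach-space splittings of $X$ and $Y$ that are compatible as $\lambda$ varies: once the complementary subbundle $E^c$ and its trivialization $\hat T^c$ are secured, the remainder is bookkeeping with the parity axioms of \lref{lemparity}. The transversality hypothesis $T(\lambda)X+V=Y$ enters twice in a crucial way — once in the dimension formula and once in the surjectivity of $T_{22}(\lambda)$ — so it cannot be weakened without breaking either the subbundle structure or the cancellation of the $E^c$-contribution in the parity computation.
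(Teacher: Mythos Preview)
The paper does not give its own proof of this lemma; it is stated with a citation to \cite{FiPejsachowiczIII, FitPejRab} as a known result. Your proof is correct and follows essentially the standard argument from those references: establish the fiber dimension via the index-zero identity and transversality, build local trivializations from the surjective family $q\circ T(\lambda)$, then split $T(\lambda)$ block-triangularly relative to $E(T,V)_\lambda\oplus E^c_\lambda$ and $V\oplus V^c$ and cancel the invertible lower-right block using the parity axioms of \lref{lemparity}.

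One minor imprecision: your appeal to \cite[p.~30, Cor.~4.8]{husemoller:74} for the existence and triviality of the complementary subbundle $E^c$ is not quite on target, since Husemoller treats finite-rank bundles while $E^c$ has infinite-dimensional Banach fibers. What you actually need is (i) that a finite-rank subbundle of a trivial Banach bundle over a compact interval admits a continuous complement --- obtained by patching local projections onto $E(T,V)_\lambda$ with a partition of unity, using that convex combinations of bounded projections onto a common range are again projections --- and (ii) that any Banach bundle over $[a,b]$ is trivial, which follows by sequentially extending local trivializations along the interval. Both facts are elementary but deserve a line of justification rather than a citation to the finite-dimensional theory.
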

Since the interval $[a,b]$ is contractible, every such vector bundle $E(T,V)$ over $[a,b]$ possesses a bundle trivialization $\hat T: [a,b]\tm V\to E(T,V)$ (cf.\ \cite[p.~30, Cor.~4.8]{husemoller:74}).

\begin{remark}
	For the sake of a detailed description of the path $T\circ\hat T: [a,b]\to L(V,V)$, let $v_1,\ldots,v_d$ be a basis of the subspace $V$ from \lref{lemparity2}. Since $E(T,V)$ is a vector bundle over $[a,b]$ with fibers isomorphic to $V$, it follows that there exist continuous sections $\varphi_1,\ldots,\varphi_d:[a,b]\to E(T,V)$ such that $\varphi_1(\lambda),\ldots,\varphi_d(\lambda)$ forms a basis of $E(T,V)_{\lambda}$ for all $\lambda\in [a,b]$. We define an isomorphism
	\begin{align*}
		\hat T: [a,b]\times V&\to E(T,V),&
		\hat T_\lambda(v)
		&=
		\hat T_{\lambda}\left(\sum_{i=1}^d \alpha_i v_i\right)
		:=
		\sum_{i=1}^d\alpha_i \varphi_i(\lambda)
	\end{align*}
	and consider functionals $v_1^{\ast},\ldots,v_d^{\ast}:V\to\R$ uniquely determined from the conditions $v_j^{\ast}(v_i)=\delta_{ij}$, $1\leq i,j\leq d$. Then
$(T\circ\hat T)_{\lambda}: V\to V$ can be represented as matrix
	\begin{align*}
		M(\lambda)&=(m_{ij}(\lambda))_{i,j=1}^d,&
		m_{ij}(\lambda)&:=\sprod{v_j^{\ast},T(\lambda)(\varphi_i(\lambda))}
		\fall 1\leq i,j\leq d
	\end{align*}
	and \lref{lemparity}(e) and \ref{lemparity2} imply $\sigma(T,[a,b])=\sgn\det M(a)\cdot\sgn\det M(b)$.
\end{remark}

Our subsequent bifurcation result requires a local version of the parity near \emph{isolated singular points} $\lambda_0\in(a,b)$. This means $T(\lambda_0)\not\in GL(X,Y)$, but there exists a neighborhood $\Lambda_0\subseteq(a,b)$ of $\lambda_0$ such that $T(\lambda)\in GL(X,Y)$ for all $\lambda\in\Lambda_0\setminus\set{\lambda_0}$, which allows us to introduce the \emph{parity index}
\begin{equation*}
	\sigma(T,\lambda_0):=\lim_{\eps\searrow 0}\sigma(T,[\lambda_0-\eps,\lambda_0+\eps]).
\end{equation*}
\section{Parity and local bifurcations}
\label{appB}
Assume that $U\subseteq X\tm\R$ is nonempty and open. We investigate abstract parametrized equations
\begin{equation}
	\tag{$O_\lambda$}
	G(x,\lambda)=0
\end{equation}
for continuous functions $G:U\to Y$ having the following properties:
\begin{enumerate}
	\item[$\mathbf{(M_1)}$] the partial derivative $D_1G:U\to L(X,Y)$ exists as continuous function,

	\item[$\mathbf{(M_2)}$] there exists an open interval $\Lambda\subseteq\R$ with $\set{0}\tm\Lambda\subseteq U$ such that $G(0,\lambda)=0$ and $D_1G(0,\lambda)\in F_0(X,Y)$ for all $\lambda\in\Lambda$.
\end{enumerate}
We denote a $\lambda_0\in\Lambda$ as \emph{bifurcation value}, provided $(0,\lambda_0)$ is a bifurcation point for \eqref{abs} (e.g.\ \cite[p.~309, Def.~1]{zeidler:95}).
\begin{theorem}[local bifurcations]\label{thmbifurcation}
	Let $(M_1$--$M_2)$ hold. If $\lambda_0\in\Lambda$ is an isolated singular point of $D_1G(0,\cdot)$ with parity index
	\begin{equation*}
		\sigma(D_1G(0,\cdot),\lambda_0)=-1,
	\end{equation*}
	then $\lambda_0$ is a bifurcation value for \eqref{abs}. More precisely, there exists a $\delta_0>0$ such that for each $\delta\in(0,\delta_0)$ a connected component
	\begin{equation*}
		\cC
		\subseteq
		G^{-1}(0)\setminus\set{(0,\lambda)\in X\tm\Lambda:\,D_1G(0,\lambda)\in GL(X,Y)}
	\end{equation*}
	joins the set $\set{(0,\lambda)\in X\tm(\lambda_-,\lambda_+):\,D_1G(0,\lambda)\not\in GL(X,Y)}$ of critical trivial solutions to the surface $\bigl\{(x,\lambda)\in X\tm\Lambda:\,\norm{x}_X=\delta\bigr\}$.
\end{theorem}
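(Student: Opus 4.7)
The plan is to reduce \eqref{abs} near $(0,\lambda_0)$ to a finite-dimensional bifurcation equation via Lyapunov--Schmidt, translate the parity-index hypothesis into a Brouwer-degree jump through \lref{lemparity2} and \lref{lemparity}(e), and then invoke the classical Rabinowitz-type global alternative in finite dimensions to produce the continuum.

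First, since $\lambda_0$ is an isolated singular point, fix $\eps_0>0$ such that $[\lambda_0-\eps_0,\lambda_0+\eps_0]\subseteq\Lambda$ and $T(\lambda):=D_1G(0,\lambda)\in GL(X,Y)$ for every $\lambda\in[\lambda_0-\eps_0,\lambda_0+\eps_0]\setminus\set{\lambda_0}$. Using $(M_1)$ and $(M_2)$, perform a Lyapunov--Schmidt reduction based at $\lambda_0$: choose closed complements $X=N(T(\lambda_0))\oplus X_1$ and $Y=Y_0\oplus R(T(\lambda_0))$ with $\dim N(T(\lambda_0))=\dim Y_0<\infty$ (by index~$0$ Fredholmness) and let $\pi:Y\to Y_0$ denote the corresponding projection. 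Solving $(I_Y-\pi)G(u+w,\lambda)=0$ locally for $w=w(u,\lambda)\in X_1$ via the Implicit Function Theorem yields a $C^1$ map $w$ with $w(0,\lambda)\equiv 0$ and produces the bifurcation equation
\begin{equation*}
	B(u,\lambda):=\pi G(u+w(u,\lambda),\lambda)=0,\quad u\in V_0\subseteq N(T(\lambda_0)),
\end{equation*}
whose zeros are in bijective correspondence with the solutions of \eqref{abs} in a neighborhood of $(0,\lambda_0)$; moreover $B(0,\lambda)\equiv 0$ and $\lambda\mapsto D_1B(0,\lambda)$ defines a continuous path between finite-dimensional spaces of equal dimension.

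The invariance of the parity under Lyapunov--Schmidt reduction (see \cite{FiPejsachowiczV}) yields $\sigma(D_1B(0,\cdot),[\lambda_0-\eps,\lambda_0+\eps])=\sigma(T,[\lambda_0-\eps,\lambda_0+\eps])=-1$ for all $\eps\in(0,\eps_0]$ small, and the finite-dimensional formula \lref{lemparity}(e) then gives $\sgn\det D_1B(0,\lambda_0-\eps)\cdot\sgn\det D_1B(0,\lambda_0+\eps)=-1$. Hence the Brouwer indices of the isolated trivial zero $0$ of $B(\cdot,\lambda)$ differ between $\lambda=\lambda_0-\eps$ and $\lambda=\lambda_0+\eps$, so $\deg_B(B(\cdot,\lambda),B_r(0),0)$ jumps as $\lambda$ crosses $\lambda_0$ for any sufficiently small ball $B_r(0)\subseteq V_0$. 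This is precisely the hypothesis of the classical Rabinowitz global alternative, which applied to the finite-dimensional $B$ produces a connected component $\cC_B$ of nontrivial zeros of $B$ that emanates from $(0,\lambda_0)$ and either meets the lateral sphere $\partial B_r(0)\tm[\lambda_0-\eps_0,\lambda_0+\eps_0]$ for every small $r>0$ or returns to a further critical trivial solution.

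Lifting $\cC_B$ through $(u,\lambda)\mapsto(u+w(u,\lambda),\lambda)$ yields the continuum $\cC$ in $G^{-1}(0)$. At any regular trivial point $(0,\tilde\lambda)$ with $T(\tilde\lambda)\in GL(X,Y)$ the Implicit Function Theorem implies that $0$ is the unique local solution of $G(\cdot,\tilde\lambda)=0$, so $\cC$ cannot accumulate at such points and therefore stays in $G^{-1}(0)\setminus\set{(0,\lambda):\,T(\lambda)\in GL(X,Y)}$. Translating the Rabinowitz dichotomy from $B$ back to $G$ provides $\delta_0>0$ such that for each $\delta\in(0,\delta_0)$ the component $\cC$ joins the set of critical trivial solutions to the surface $\set{(x,\lambda)\in X\tm\Lambda:\,\norm{x}_X=\delta}$. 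The main obstacle is the correct transfer of the parity across the Lyapunov--Schmidt reduction and its conversion into a Brouwer-degree jump that feeds into the global alternative, both of which rest on the delicate machinery of \cite{FiPejsachowiczIV,FiPejsachowiczV,FitPejRab}; a subsidiary technical subtlety is ensuring that the lifted component inherits the precise joining property claimed in the theorem, which relies on the openness of the Fredholm operators and the isolation of $\lambda_0$.
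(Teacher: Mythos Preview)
Your approach is correct in outline but takes a genuinely different route from the paper's proof. The paper does not carry out a Lyapunov--Schmidt reduction at all; instead it directly invokes \cite[Thm.~4.1]{lopez:sampedro:23} (a ready-made bifurcation theorem for $C^1$ Fredholm maps under an odd generalized-multiplicity hypothesis), uses \cite[Thm.~3.2]{lopez:sampedro:23} to translate between odd generalized algebraic multiplicity and parity index $-1$, and then appeals to \cite[Lemma~2.3.1]{pejsachowicz:11a} or \cite[Lemma~6.3]{Pej-Ski} together with \cite[Thm.~8.73]{vaeth:12} to weaken joint $C^1$-smoothness to the mere partial differentiability in $(M_1)$--$(M_2)$. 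Your argument instead reconstructs the mechanism underlying such results: reduce to finite dimensions, convert the parity into a sign change of determinants via \lref{lemparity}(e), and feed the resulting Brouwer-degree jump into the Rabinowitz global alternative. What your route buys is transparency---one sees exactly why the parity jump forces a continuum---whereas the paper's citation-based proof is much shorter but treats the cited theorems as black boxes.

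One caution: under $(M_1)$--$(M_2)$ the map $G$ is only continuous in $\lambda$, so your claim that the implicitly defined $w$ is a ``$C^1$ map'' overstates the regularity available; the parametrized Implicit Function Theorem yields $w$ that is $C^1$ in $u$ but merely continuous in $\lambda$. This does not break your argument (continuity of $B$ and of $\lambda\mapsto D_1B(0,\lambda)$ suffice for the degree and Rabinowitz steps), but it is precisely the regularity gap that the paper's appeal to Pejsachowicz and V\"ath is designed to bridge. You already flag this as a ``subsidiary technical subtlety'' at the end, but it deserves to be addressed where it first arises rather than deferred.
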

\begin{proof}
	Since $U\subseteq X\tm\Lambda$ is open, there exist open neighborhoods $U_0\subseteq X$ of $0$ and $\Lambda_0\subseteq\Lambda$ of $\lambda_0$, so that $U_0\tm\Lambda_0\subseteq U$. Because $\lambda_0$ is assumed to be an isolated singular point of $D_1G(0,\cdot)$, there exist parameters $\lambda_-<\lambda_+$ in $\Lambda_0$ yielding invertible endpoints $D_1G(0,\lambda_-),D_1G(0,\lambda_+)\in GL(Y,X)$ and parity $\sigma(D_1G(0,\cdot),[\lambda_-,\lambda_+])=-1$. Now for $C^1$-mappings $G:U\to Y$ this allows an immediate application of \cite[Thm.~4.1]{lopez:sampedro:23} under the assumption that the generalized algebraic multiplicity of the path $D_1G(\cdot,0)$ is odd. But because of \cite[Thm.~3.2]{lopez:sampedro:23} this is equivalent to our assumption of having a parity index $-1$ in $\lambda_0$. Moreover, the continuous differentiability of $G$ can be weakened to our assumption $(M_2)$ using methods due to Pejsachowicz \cite[Lemma~2.3.1]{pejsachowicz:11a} or \cite[Lemma~6.3]{Pej-Ski} together with \cite[Thm.~8.73]{vaeth:12}.
\end{proof}
\bibliographystyle{amsplain}
\providecommand{\bysame}{\leavevmode\hbox to3em{\hrulefill}\thinspace}

\end{document}